\newcommand{\CM}{Cohen-Macaulay}
\newcommand{\wrt}{with respect to}
\newcommand{\m}{\mathfrak{m} }
\newcommand{\M}{\mathfrak{M} }
\newcommand{\N}{\mathfrak{N} }
\newcommand{\q}{\mathfrak{q} }
\newcommand{\p}{\mathfrak{p} }
\newcommand{\A}{\mathfrak{a} }
\newcommand{\F}{\mathcal{F} }
\newcommand{\Z}{\mathbb{Z} }
\newcommand{\Oo}{\mathcal{O} }
\newcommand{\rt}{\rightarrow}
\newcommand{\ov}{\overline}
\newcommand{\depth}{\operatorname{depth}}
\newcommand{\Tr}{\operatorname{Tr}}
\newcommand{\rank}{\operatorname{rank}}
\newcommand{\Hom}{\operatorname{Hom}}
\newcommand{\Ext}{\operatorname{Ext}}
\theoremstyle{plain}
\newtheorem{theorem}{Theorem}[section]
\newtheorem{lemma}[theorem]{Lemma}
\newtheorem{proposition}[theorem]{Proposition}
\newtheorem{conjecture}[theorem]{Conjecture}
\theoremstyle{definition}
\newtheorem{remark}[theorem]{Remark}
\theoremstyle{remark}
\begin{document}

\title[invariant rings]{The Cohen-Macaulay property of invariant rings over ring of integers of a global field}
\author{Tony~J.~Puthenpurakal}
\date{\today}
\address{Department of Mathematics, IIT Bombay, Powai, Mumbai 400 076}

\email{tputhen@math.iitb.ac.in}

\subjclass{Primary  13A50; Secondary 13H10 }
\keywords{invariant rings, ring of integers of global fields, Hilbert class fields, Cohen-Macaulay rings, group cohomology}

 \begin{abstract}
Let $A$ be the ring of integers of global field $K$.  Let $G \subseteq GL_2(A)$ be a finite group. Let $G$ act linearly on $R = A[X,Y]$ (fixing $A$). Let $R^G$ be the ring of invariants. In the equi-characteristic case we prove $R^G$ is \CM. In mixed characteristic case we prove that if for all primes $p$ dividing $|G|$ the Sylow $p$-subgroup of $G$ has exponent $p$ then $R^G$ is \CM. We prove a similar case if for all primes $p$ dividing $|G|$ the prime $p$ is un-ramified in $K$.
\end{abstract}
 \maketitle
\section{introduction}
Let $K$ be  a field and let $G \subseteq GL_n(K)$ be  a finite group acting linearly on $R = K[X_1, \ldots, X_n]$. Let $R^G$ be the ring of invariants.
 The study of such invariant  rings have a rich history both in the non-modular case (when $|G|$ is invertible in $K$) and in the modular case
(when $|G| = 0$ in $K$). See \cite{S} for a nice book on the subject.

\s \label{intro} In this paper we assume $A$ is ring of integers in a global field, i.e., $A$ is one of the following two rings
\begin{enumerate}
  \item  the ring of integers of a number field $K$ (i.e., $K$ is a finite extension of $\mathbb{Q}$).
  \item the  ring of integers of finite extension of $F_q(t)$ (where $F_q$ is a finite field with $q$ elements).
\end{enumerate}
Let $G \subseteq GL_n(A)$ be a finite group. Let $R = A[X_1, \ldots, X_n]$ and let $G$ act linearly on $R$ (fixing $A$). In this paper we study \CM \ property
 of $R^G$ when $n = 2$. One might wonder why should one study such rings. As we show that there is nice interplay between algebraic number theory, group cohomology
 and commutative algebra while investigating such questions.
Previously the case when $n = 2, 3$ was studied when $A = \Z$, see \cite{A}. In this paper  it was proved that $\Z[X_1, X_2]^G$ is \CM  \ while $\Z[X_1, X_2, X_3]^G$ need not be \CM.

\s \label{setup} Consider the ring $R = A[X,Y]$. Let $G \subseteq GL_2(A)$ be a finite group. $G$ acts linearly on $R$ (fixing $A$). Let $R^G$ be the ring of invariants. Clearly $R^G$ is a normal domain of dimension three. A natural question is when $R^G$ is \CM.

In this paper we prove the following two results.
In the equi-characteristic case (when $A$ is of type (2) in \ref{intro}) we show
\begin{theorem}\label{equi}
(with hypotheses as in \ref{setup}). Further assume $A$ is the ring of integers of a finite extension of $F_q(t)$. Then $R^G$ is \CM.
\end{theorem}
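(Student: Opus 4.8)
The plan is to verify the Cohen--Macaulay property locally, at each maximal ideal $\M$ of $R^G$. A convenient preliminary reduction is to enlarge $A$ to the ring of integers of the Hilbert class field of $K$: that extension is finite and faithfully flat, invariants commute with such a flat base change, and Cohen--Macaulayness descends along faithfully flat maps, so one may assume $A$ is a principal ideal domain and thereby trivialise the projective $A$-modules that arise. Now a maximal ideal $\M$ of $R^G$ has height three and contracts to a maximal ideal $\m$ of $A$, whose residue field $k=A/\m$ is finite of characteristic $p=\operatorname{char}A$. Since $R^G$ is a normal domain it satisfies Serre's condition $(S_2)$, so $\depth (R^G)_\M\ge 2$ is automatic and the only content is to raise this to three. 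I would localise $A$ at $\m$, writing $\Oo=A_\m$ for the resulting DVR with uniformiser $\pi$; then $R^G_\m=(\Oo[X,Y])^G$ is torsion-free over $\Oo$, hence $\Oo$-flat. By the flat local criterion for Cohen--Macaulayness (a flat local homomorphism out of a DVR has Cohen--Macaulay total ring iff its closed fibre is Cohen--Macaulay), it suffices to prove that the closed fibre $R^G_\m/\pi R^G_\m$ is Cohen--Macaulay of dimension two; equivalently, as $\pi$ is a nonzerodivisor on the domain $R^G_\m$, that $\depth_\M\bigl(R^G_\m/\pi R^G_\m\bigr)=2$.

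The comparison between this closed fibre and the invariants of the reduced ring is governed by group cohomology. Applying $(-)^G$ to $0\to \Oo[X,Y]\xrightarrow{\ \pi\ }\Oo[X,Y]\to k[X,Y]\to 0$ and invoking the snake lemma for multiplication by $\pi$ yields a short exact sequence
\[
0\longrightarrow R^G_\m/\pi R^G_\m \longrightarrow k[X,Y]^G \longrightarrow C\longrightarrow 0,\qquad C:=H^1(G,\Oo[X,Y])[\pi],
\]
where $C$ is the $\pi$-torsion submodule of the graded group cohomology. The crucial free input is that $k[X,Y]^G$ is a two-dimensional normal domain, hence automatically Cohen--Macaulay, so $\depth_\M k[X,Y]^G=2$. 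The depth lemma applied to the displayed sequence then gives $\depth_\M\bigl(R^G_\m/\pi R^G_\m\bigr)\ge \min\{2,\ \depth_\M C+1\}$, and since the left-hand module has dimension two this forces it to be Cohen--Macaulay the moment $\depth_\M C\ge 1$. Thus the entire theorem reduces to one statement: the finitely generated $k[X,Y]^G$-module $C$ has no nonzero finite-length submodule.

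The heart of the argument, and the step I expect to be the main obstacle, is precisely this positive-depth assertion for $C$, and it is here that the equicharacteristic hypothesis enters. First one checks that $H^1(G,\Oo[X,Y])=\bigoplus_d H^1(G,\operatorname{Sym}^d(\Oo^2))$ is a finitely generated $R^G_\m$-module: it is a subquotient of the module of $1$-cochains $\Oo[X,Y]^{|G|}$, which is finite over the Noetherian ring $R^G_\m$. Hence $C$ is genuinely a finitely generated, $\pi$-torsion, and so $k[X,Y]^G$-module. One cannot hope for $C=0$: the element $g=\bigl(\begin{smallmatrix}1&\pi\\0&1\end{smallmatrix}\bigr)$ already produces a nonzero $C$. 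The mechanism I would exploit is that, because $A$ is a smooth affine curve over a finite field, the generic residue field $K$ and every special residue field $k$ share the characteristic $p$. Group cohomology commutes with flat (in particular field) base change in a fixed characteristic, so the generic and special fibres of $H^1(G,-)$ have matching dimensions after extension of scalars, and the $\pi$-torsion in $H^1$ is forced to spread out along the fibre directions $X,Y$ rather than concentrate at the closed point $\M$. Concretely I would show $C$ has empty socle: any prospective socle element arises from an invariant $\bar f\in k[X,Y]^G$ that fails to lift to $R^G_\m$, and I would produce a fibre coordinate whose product with $\bar f$ still fails to lift, contradicting the liftability forced by $\M\cdot\bar f=0$ in $C$.

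This is exactly the behaviour that collapses in mixed characteristic, where the passage from the characteristic-zero generic fibre to the characteristic-$p$ special fibre can create genuine finite-length torsion in $H^1(G,\Oo[X,Y])$; the hypotheses of the mixed-characteristic theorems (Sylow $p$-subgroup of exponent $p$, or $p$ unramified in $K$) are tailored to suppress that torsion. In the equicharacteristic case no such hypothesis is required, and the argument above should give $\depth_\M C\ge 1$ unconditionally, completing the proof that $R^G$ is \CM.
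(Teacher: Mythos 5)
Your reduction framework is sound, and it in fact coincides with the paper's own: localizing at a maximal ideal to get a DVR $(\Oo,(\pi))$, using $\Oo$-flatness to pass to the closed fibre, and extracting the exact sequence $0 \rt S/\pi S \rt (R/\pi R)^G \rt C \rt 0$ with $C = \ker(\pi \colon H^1(G,R) \rt H^1(G,R))$ is precisely the content of Proposition \ref{prelim}(4), which says $S = R^G$ is \CM\ if and only if $\depth_\M H^1(G,R) > 0$. But your proof stops exactly where you yourself locate ``the main obstacle'': the positive-depth (empty-socle) assertion for $C$ is never established. The sentence about the $\pi$-torsion being ``forced to spread out along the fibre directions'' is not an argument --- flat base change (Proposition \ref{localization}) compares cohomology before and after localization but gives no semicontinuity statement relating generic and closed fibres --- and the promised construction of ``a fibre coordinate whose product with $\bar f$ still fails to lift'' is the entire missing content of the theorem, not a verification one can defer. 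Note also that you never reduce to $G$ a $p$-group, a step any argument of this shape needs, because the key input below fails for general $G$; the paper does this first via transfer (Theorem \ref{sylow}).

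The paper fills the gap with a concrete mechanism your sketch does not identify. Lemma \ref{ord1} uses the equicharacteristic hypothesis in one specific place: the fraction field $K = \Oo_\pi$ also has characteristic $p$, so by \cite[8.2.1]{S} the $p$-group $G$ fixes a nonzero \emph{linear} form $u \in K[x,y]_1$; clearing powers of $\pi$ yields $v \in S_1$ with $v, \pi$ an $S$-regular sequence and $H^2_{S_+}(S/vS) = 0$. Feeding $v$ into the long exact sequence of local cohomology gives surjections $H^2_{S_+}(S)_{n-1} \rt H^2_{S_+}(S)_n \rt 0$ for all $n$; since the Ellingsrud--Skjelbred exact sequence of Proposition \ref{first-es}, together with $H^1(G,R)$ being concentrated in nonnegative degrees (Proposition \ref{non-neg}), forces $H^2_{S_+}(S)_{-1} = 0$, iteration kills $H^2_{S_+}(S)_n$ for all $n \geq -1$, and then the same sequence gives $H^0_{S_+}(H^1(G,R)) = 0$, i.e.\ $\depth_\M H^1(G,R) > 0$. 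It is this degree-one invariant --- unavailable in mixed characteristic, where the generic fibre has characteristic zero --- that makes the torsion ``spread out'' in the way you hope. Without it, or an explicit substitute, your socle argument has no engine: as written, the proposal establishes only the standard reduction, not the theorem.
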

In the mixed characteristic case we prove the following:
\begin{theorem}\label{mixed}
(with hypotheses as in \ref{setup}). Further assume $A$ is the ring of integers of  a number field $K$. Assume for every prime $p$ dividing $|G|$, the Sylow $p$-subgroup of $G$ has exponent $p$. Then $R^G$ is \CM.
\end{theorem}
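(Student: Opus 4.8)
The plan is to check the \CM\ property after localizing and to reduce, through the depth lemma, to a single vanishing/positivity statement in group cohomology that the exponent-$p$ hypothesis is tailored to supply.

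First I would reduce to a local depth computation. Since $R^G$ is a normal domain of dimension three it satisfies Serre's condition $(S_2)$, hence is \CM\ at every prime of height $\le 2$; the only possible failure occurs at a height-three maximal ideal $\M$, which lies over a maximal ideal $\p$ of $A$ with some residue characteristic $p$. If $p \nmid |G|$ then, after localizing at $\p$, the order $|G|$ becomes a unit, the Reynolds operator $\frac{1}{|G|}\sum_{g\in G} g$ splits the inclusion $R^G \sub R$, and because $R = A[X,Y]$ is regular the summand $(R^G)_{\M}$ is \CM\ by Hochster--Eagon. To keep the orbit constructions below clean and to neutralise the class group, I would first base change along the Hilbert class field $L/K$: the map $A \rt \Oo_L$ is faithfully flat, it commutes with $(-)^G$, and the \CM\ property descends along it, so we may assume $\p$ is principal.

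It remains to treat a maximal ideal $\M$ over $\p$ with $p \mid |G|$. Localize at $\p$ and let $\pi$ be a uniformizer of $A_\p$; it is a nonzerodivisor on the domain $R^G$, so it suffices to prove $\depth_\M (R^G/\pi R^G) \ge 2$. Feeding $0 \rt R \xrightarrow{\pi} R \rt R/\pi R \rt 0$ into the long exact sequence of group cohomology gives
\[ 0 \rt R^G/\pi R^G \rt (R/\pi R)^G \rt H^1(G,R)[\pi] \rt 0, \]
where $[\pi]$ denotes the $\pi$-torsion submodule. Here $(R/\pi R)^G = k[X,Y]^G$ with $k = A_\p/\p A_\p$, a two-dimensional normal domain and therefore \CM, so $\depth_\M (R/\pi R)^G = 2$. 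Applying the depth lemma to the displayed sequence, the whole theorem comes down to showing that the finitely generated graded $k[X,Y]^G$-module $C := H^1(G,R)[\pi]$ either vanishes or satisfies $\depth_\M C \ge 1$.

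This last point is the heart of the matter and the step I expect to be hardest. Writing $C = \bigoplus_{d \ge 0} H^1(G, \operatorname{Sym}^d A^2)[\pi]$, the module structure is induced by the equivariant multiplications $\operatorname{Sym}^d A^2 \hookrightarrow \operatorname{Sym}^{d+e} A^2$ by invariants. Since $C$ is $p$-torsion, transfer embeds it into $H^1(P, R)$ for a Sylow $p$-subgroup $P$; as every element of $P$ has order exactly $p$, its reduction modulo $\p$ is unipotent, and over $k$ one is computing the two-periodic Tate cohomology of cyclic groups of order $p$, governed by a single operator $\si - 1$. I would exhibit a homogeneous nonzerodivisor on $C$ by taking an orbit product $z = \prod_{g \in G} g(\ell)$ of a generic linear form $\ell$: from $0 \rt \operatorname{Sym}^d A^2 \xrightarrow{z} \operatorname{Sym}^{d+e} A^2 \rt Q_d \rt 0$ the kernel of multiplication by $z$ on $H^1(G, \operatorname{Sym}^d A^2)$ is the image of the connecting map $H^0(G, Q_d) \rt H^1(G, \operatorname{Sym}^d A^2)$, and the hard part is proving that this image carries no $\pi$-torsion, so that $z$ acts injectively on $C$. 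It is precisely here that exponent $p$ is essential: for order exactly $p$ the Tate cohomology is controlled by $\si - 1$ alone and the connecting map can be computed explicitly, whereas for exponent $p^2$ an extra $\pi$-torsion socle can survive and destroy the depth. Granting this, $\depth_\M C \ge 1$, hence $\depth_\M (R^G/\pi R^G) \ge 2$ and $\depth_\M (R^G) = 3$, so $R^G$ is \CM.
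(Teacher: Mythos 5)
Your opening reductions are sound and in fact coincide with the paper's own: the sequence $0 \rt R^G/\pi R^G \rt (R/\pi R)^G \rt H^1(G,R)[\pi] \rt 0$ is the paper's \ref{les}, and your depth-lemma reduction to $\depth_\M C \geq 1$ for $C = H^1(G,R)[\pi]$ is Proposition \ref{prelim}(4). But at the decisive step the proposal stops: the claim that the orbit product $z = \prod_{g \in G} g(\ell)$ of a generic linear form acts injectively on $C$ --- equivalently, that the image of the connecting map $H^0(G, Q_d) \rt H^1(G, \mathrm{Sym}^d A^2)$ carries no $\pi$-torsion --- is exactly the hard content of the theorem, and you give no argument for it beyond ``the connecting map can be computed explicitly'' and ``Granting this''. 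Exhibiting a homogeneous nonzerodivisor on $C$ is, essentially by definition, the assertion $\depth_\M C \geq 1$, so as written the proposal is circular at its core. Two supporting heuristics are also wrong or insufficient: (i) a Sylow $p$-subgroup of exponent $p$ in $GL_2$ need not be cyclic --- when $\zeta_p \in K$ the group $(\Z/p)^2$ embeds diagonally --- so the cohomology is not ``governed by a single operator $\sigma - 1$''; this is precisely why the paper runs an induction on $|G|$ through central order-$p$ subgroups, whose intermediate invariant rings are no longer polynomial, forcing the class $\F_\Oo$ of Section 4 and Theorem \ref{mix-local}. (ii) Your Hilbert-class-field base change of $K$ itself (to make $\p$ principal) accomplishes nothing after localizing, since $A_\p$ is already a DVR.

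What is genuinely missing is the arithmetic mechanism the paper actually exploits, and it is where the exponent-$p$ hypothesis enters: by Theorem \ref{red-hilb}, since $\mathbb{Q}(e^{2\pi i/p})$ is a splitting field for a group of exponent $p$, one reduces to $\Oo$ a localization of the ring of integers of the Hilbert class field of $\mathbb{Q}(e^{2\pi i/p})$, where unramifiedness gives $\xi - 1 = \pi u_\xi$ with $u_\xi$ a unit for every primitive $p$-th root of unity $\xi$ (Lemma \ref{units}). With that in hand, the order-$p$ case splits in two: when $G \rt Aut^*(R/\pi R)$ is trivial, Lemma \ref{basis} diagonalizes each graded piece with $p$-th-root-of-unity eigenvalues and the explicit cocycle computation of Proposition \ref{g1} yields $\pi H^1(G,R) = 0$ and an embedding $H^1(G,R) \hookrightarrow R/\pi R$, giving positive depth; when it is injective, the Ellingsrud--Skjelbred spectral sequences (Proposition \ref{first-es}) together with additive Hilbert 90 give $\dim H^2(G,R) \leq 1$ and then $H^0_{S_+}(H^3(G,R)) = 0$ with $H^3(G,R) \cong H^1(G,R)$. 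Your unipotence remark and orbit product replace none of this; note that an invariant \emph{linear} form exists only in the equicharacteristic proof (Lemma \ref{ord1}), and nothing in your sketch identifies the mechanism by which exponent $p^2$ would create the ``torsion socle'' you allude to --- that sentence is exactly where the proof would have to live.
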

Theorem \ref{mixed} has condition on the group $G$. The next result gives the \CM \ property of $R^G$ under a condition on $K$.
\begin{theorem}\label{unramify}
(with hypotheses as in \ref{setup}). Further assume $A$ is the ring of integers of  a number field $K$. Assume for every prime $p$ dividing $|G|$, the prime $p$ is un-ramified in $K$. Then $R^G$ is \CM.
\end{theorem}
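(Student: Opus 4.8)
The plan is to prove $R^G$ is \CM\ by localizing the question at the primes that actually cause trouble and then trading a depth statement for a group-cohomology statement after reduction modulo $p$. Since $R^G$ is a three-dimensional normal domain it is $(S_2)$, so $\depth_{\m}R^G_{\m}\geq 2$ at every maximal ideal, and $R^G$ is \CM\ \ff\ $\depth_{\m}R^G_{\m}=3$ for every maximal ideal $\m$ of height three. Each such $\m$ contracts to a maximal ideal $\p$ of $A$ of some residue characteristic $p$. If $p\nmid|G|$ then $|G|$ is a unit in $A_{\p}$, the Reynolds operator splits $R^G\hookrightarrow R$ after localizing, so $R^G_{\m}$ is a direct summand of $R$ as an $R^G$-module; since $R$ is \CM, so is $R^G_{\m}$. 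Thus I may fix $\p$ with $p\mid|G|$.

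Here the hypothesis that $p$ is un-ramified in $K$ enters decisively, for it forces $p$ to be a uniformizer of $A_{\p}$ (indeed $\wh{A_{\p}}=W(\mathbb{F}_q)$ for the residue field $\mathbb{F}_q$), so that $R/pR=\mathbb{F}_q[X,Y]$ is genuinely a polynomial ring over a field and $G$ acts on it through its reduction $\ov G\sub GL_2(\mathbb{F}_q)$. Because $p$ is a nonzerodivisor on the domain $R^G$ we have $\depth_{\m}R^G_{\m}=1+\depth_{\m}(R^G/pR^G)_{\m}$, so $R^G$ is \CM\ at $\m$ \ff\ the two-dimensional ring $R^G/pR^G$ is \CM\ there. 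The long exact cohomology sequence attached to $0\rt R\xrightarrow{p}R\rt R/pR\rt 0$ gives the short exact sequence $0\rt R^G/pR^G\rt (R/pR)^{\ov G}\rt H^1(G,R)[p]\rt 0$, in which $(R/pR)^{\ov G}=\mathbb{F}_q[X,Y]^{\ov G}$ is a two-dimensional normal domain, hence \CM. By the depth lemma applied to this sequence, $R^G/pR^G$ is \CM\ at $\m$ as soon as the modular obstruction $H^1(G,R)[p]$ has positive depth at $\m$, i.e.\ no $\m$-torsion.

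Everything therefore reduces to controlling $H^1(G,R)=\bigoplus_{d\geq 0}H^1(G,R_d)$, a finitely generated graded $R^G$-module (with $R_d$ the free symmetric power of rank $d+1$) supported on $V(|G|)$, and to showing its $p$-torsion has no finite-length part at $\m$. Using un-ramifiedness I would compare $H^1(G,R)[p]$, via the displayed sequence, with the characteristic-$p$ cohomology of $\ov G$ acting on $\mathbb{F}_q[X,Y]$, viewed as a module over the \CM\ ring $\mathbb{F}_q[X,Y]^{\ov G}$. For \emph{odd} $p$ this is where the un-ramified hypothesis is strongest: the principal congruence subgroup $\ker\bigl(GL_2(W(\mathbb{F}_q))\rt GL_2(\mathbb{F}_q)\bigr)$ is then torsion-free, so a Sylow $p$-subgroup of $G$ injects into $GL_2(\mathbb{F}_q)$, where every $p$-subgroup is unipotent and elementary abelian of exponent $p$. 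Thus at every odd $p$ the hypothesis of the present theorem already implies that of Theorem \ref{mixed}, and I would simply reuse the cohomological input of that theorem.

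The main obstacle, and the genuinely new content beyond Theorem \ref{mixed}, is the prime $p=2$: there un-ramifiedness does not force exponent $p$ (for instance a cyclic group of order $4$ embeds in $GL_2(\Z)$ while $2$ is un-ramified in $\mathbb{Q}$), so the depth of $H^1(G,R)[2]$ must be established by a direct analysis of the $2$-adic structure of the congruence subgroup rather than by appeal to Theorem \ref{mixed}. To carry out this analysis uniformly, and to trivialize the projective $A$-modules that appear when identifying $H^1$ with its reduction, I would pass to the Hilbert class field $H$ of $K$: since $H/K$ is un-ramified, the hypothesis that $p$ is un-ramified is preserved, taking invariants commutes with the flat base change $A\rt B=\Oo_H$, and \CM ness descends along the faithfully flat map $R^G\rt (B[X,Y])^G=R^G\otimes_A B$. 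Working over $B$ the class group of $A$ capitulates and the relevant reflexive modules become free, which is precisely what makes the explicit depth computation for $H^1(G,R)[p]$, including the delicate case $p=2$, tractable.
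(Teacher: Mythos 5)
Your reductions are sound, and your observation for odd $p$ is both correct and genuinely efficient: since $p$ is un-ramified, $\wh{A_\p} = W(\mathbb{F}_q)$ and the congruence kernel $\ker(GL_2(W(\mathbb{F}_q)) \rt GL_2(\mathbb{F}_q))$ is torsion-free for $p$ odd, so a Sylow $p$-subgroup of $G$ injects into $GL_2(\mathbb{F}_q)$ and is therefore elementary abelian of exponent $p$; after the transfer reduction to Sylow subgroups (Theorem \ref{sylow}, which you should invoke explicitly, since Theorem \ref{mixed} as stated imposes the exponent condition at \emph{every} prime dividing $|G|$), Theorem \ref{mixed} then disposes of all odd primes. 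The paper does not make this shortcut; it instead runs its general machinery uniformly. Your setup via the exact sequence $0 \rt R^G/pR^G \rt (R/pR)^G \rt H^1(G,R)[p] \rt 0$ and the depth criterion is also essentially the paper's Proposition \ref{prelim}(4).

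The genuine gap is at $p = 2$, which you yourself identify as ``the genuinely new content'' and then do not prove. Your plan --- ``a direct analysis of the $2$-adic structure of the congruence subgroup'' after passing to the Hilbert class field of $K$, where ``the class group capitulates and the relevant reflexive modules become free'' --- is not an argument, and its key suggestion points in the wrong direction: the Hilbert class field of $K$ plays no role in the paper's proof of this theorem (it enters only in Theorem \ref{mixed}, via \ref{hilb}/\ref{red-hilb}, to conjugate a representation into $GL_n$ of a ring of integers, which is unnecessary here since $G$ already lies in $GL_2(A)$), and no capitulation or freeness of reflexive modules is used anywhere. What the paper actually does is base-change along $A \rt C = \Oo_{K(\zeta_p)}$ (Proposition \ref{bc-prop}) and prove the arithmetic fact (Theorem \ref{ing}) that, because $p$ is un-ramified in $K$, the prime of $\mathbb{Q}(\zeta_p)$ above $p$ stays un-ramified in $K(\zeta_p)$; hence at any prime $P$ of $C$ over $p$ one has $\xi - 1 = \pi u_\xi$ with $u_\xi$ a unit (Lemma \ref{unite-ur}), i.e.\ the local hypothesis \ref{setup-case2-O}. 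The induction machine Theorem \ref{mix-local} --- Cases \ref{case-1} and \ref{c2-thm}, driven by the Ellingsrud--Skjelbred sequences and the eigenbasis Lemma \ref{basis} --- imposes \emph{no exponent condition} on the $p$-group, so it handles your cyclic-of-order-$4$ example directly; for $p = 2$ un-ramified the condition \ref{setup-case2-O} is even automatic ($\xi = -1$, $\xi - 1 = -2 = \pi \cdot \text{unit}$). So the obstacle you defer is exactly the part of the proof you would need to supply, and the supplied hints would not produce it.
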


Perhaps the simplest groups to be considered are cyclic groups. In this case we prove the following result:
\begin{theorem}\label{cyclic-thm}
(with hypotheses as in \ref{setup}). Further assume $A$ is the ring of integers of  a number field $K$. Assume $G = \ <\sigma> \  \subseteq GL_2(A)$ is  cyclic.
Let $\epsilon_1, \epsilon_2$ be eigen-values of $\sigma$.
 Assume one of the following two conditions holds.
\begin{enumerate}[\rm (1)]
\item
$\epsilon_1 = \epsilon_2$.
\item
 $\epsilon_2 = \epsilon_1^{-1}$ and either $|G|$ is odd or $|G| = 2m$ with $m$ odd.
\end{enumerate}
Then $R^G$ is \CM.
\end{theorem}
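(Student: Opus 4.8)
The plan is to deduce \CM ness from a fibrewise criterion over $\Spec A$, then reduce to the Sylow subgroups of $G$, the only genuinely new work occurring at the odd, possibly ramified, primes dividing $|G|$ at which $\sigma$ fails to be semisimple. First I record the structural reductions. As $R^G$ is a three dimensional normal domain it satisfies $(S_2)$, so $\depth_\M R^G\ge 2$ at every maximal ideal $\M$ and \CM ness is equivalent to $\depth_\M R^G=3$ for all $\M$. Moreover $R^G\sub R=A[X,Y]$ and $R/R^G$ are torsion free over the Dedekind domain $A$ (if $a(\sigma r-r)=0$ in the free module $R$ with $a\ne 0$ then $\sigma r=r$), hence both are $A$-flat; by the local criterion for flatness, with $A$ regular, $R^G$ is \CM \ iff every fibre $R^G\otimes_A\kappa(\p)$ is \CM. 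The generic fibre is $K[X,Y]^G$, and for $\p$ with $\operatorname{char}\kappa(\p)\nmid|G|$ the Reynolds operator shows the natural map $R^G\otimes\kappa(\p)\to\kappa(\p)[X,Y]^{\ov G}$ is an isomorphism onto a two dimensional normal ring, which is automatically \CM. Thus only the modular primes $\p\mid|G|$ remain.

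Condition (1) is then immediate. If $\epsilon_1=\epsilon_2=\epsilon$, a nontrivial nilpotent part of $\sigma$ would force the off-diagonal entry $|G|\,\epsilon^{|G|-1}\ne 0$ of $\sigma^{|G|}$ in the characteristic zero ring $A$, which is absurd; so $\sigma=\epsilon I$ and $R^G=R^{(d)}$ is the $d$-th Veronese, $d=|G|$. The grading exhibits $R$ as the direct sum of the $R^{(d)}$-submodules $\bigoplus_k R_{dk+j}$ for $0\le j\le d-1$, so $R^{(d)}$ is a module direct summand of $R$; as $R$ is module finite over $R^{(d)}$ and \CM, its summand $R^{(d)}$ is \CM. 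For condition (2) I reduce to prime power order: \CM ness of $R^G$ at the primes over a rational prime $p\mid|G|$ is governed by the Sylow subgroup $G_p=\langle\sigma^{|G|/p^{a}}\rangle$, since $[G:G_p]$ is a unit at $p$ and the relative transfer splits $R^G\hookrightarrow R^{G_p}$ locally at $p$, so a \CM \ $R^{G_p}$ forces a \CM \ $R^G$ there. The group $G_p$ is again cyclic, with eigenvalues $\eta,\eta^{-1}$ a power of $\epsilon_1,\epsilon_1^{-1}$. The hypotheses on $|G|$ dispose of $p=2$: they force $a\le 1$, and then $\eta=\pm1$, so $G_2$ is trivial or $G_2=\langle -I\rangle$, the latter giving the Veronese $R^{(2)}$ already shown \CM. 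Hence only odd $p$ survive, with $G_p=\langle\sigma_0\rangle$ of order $p^{a}$ and $\eta$ a primitive $p^{a}$-th root of unity.

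Next pass to $A'=\Oo_{K(\eta)}$. Since $A\to A'$ is finite and faithfully flat and invariants commute with flat base change, $R^G\otimes_A A'=(A'[X,Y])^{G_p}$, so by faithfully flat descent it suffices to prove the latter \CM. Over $A'$ the eigenvalues are in $A'$; at a prime $\q$ with $\eta\not\equiv\eta^{-1}\pmod\q$ the residues are distinct and Hensel's lemma diagonalises $\sigma_0$, whereupon a direct semigroup computation identifies the invariants with the $A_{p^{a}-1}$ hypersurface $A'_\q[u,v,w]/(vw-u^{p^{a}})$, which is \CM. This fails precisely at the primes $\q\mid p$, where $\ov\eta=1$ and $\ov{\sigma_0}$ is a (nontrivial) unipotent Jordan block; these are exactly the ramified modular primes left uncovered by Theorems \ref{mixed} and \ref{unramify}, and they are the crux.

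The main obstacle is therefore the local statement: for a discrete valuation ring $A'_\q$ of odd residue characteristic $p$ and a non-derogatory $\sigma_0\in GL_2(A'_\q)$ with eigenvalues $\eta,\eta^{-1}\equiv1\pmod\q$, the ring $A'_\q[X,Y]^{\langle\sigma_0\rangle}$ is \CM. Here $H^1(G_p,A'[X,Y])$ is $|G_p|$-torsion and vanishes rationally, hence is supported exactly at $\q\mid p$; this makes the special fibre $A'_\q[X,Y]^{\langle\sigma_0\rangle}\otimes\kappa(\q)$ a proper, finite-colength graded subalgebra of the transvection invariants $\kappa(\q)[X,Y]^{\ov{G_p}}=\kappa(\q)[f,N]$ (a polynomial ring, $N$ the norm of a linear form), so its \CM ness is not automatic. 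To prove it I would put $\sigma_0$ in its companion form $\left(\begin{smallmatrix}0&-1\\1&t\end{smallmatrix}\right)$ over the PID $A'_\q$, $t=\eta+\eta^{-1}$, exhibit the invariant quadratic $Q$ spanning the trivial summand of $\operatorname{Sym}^2$ together with a degree $p^{a}$ invariant as a homogeneous system of parameters on the fibre, and verify they form a regular sequence. I expect oddness of $p$ to enter exactly here, through the invertibility of $2$ making $Q$ nondegenerate, which is what lets the regularity check succeed and forces the special fibre to be \CM \ (indeed a hypersurface); the analogous step breaks at $p=2$, explaining the hypothesis. This depth verification at the non-semisimple modular prime is where essentially all the difficulty lies, the rest being the formal descent and summand arguments above.
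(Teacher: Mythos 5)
Your reductions are sound, and some are slicker than the paper's: in case (1) the observation that $\epsilon=\operatorname{tr}(\sigma)/2\in A$ and diagonalizability in characteristic zero force $\sigma=\epsilon I$, so that $R^G$ is the Veronese $R^{(d)}$ and a direct summand of the \CM\ module $R$, is a clean shortcut past the paper's base change and Lemma \ref{cyclic-lemma}; the fibrewise flatness criterion, the Sylow transfer (the paper's Theorem \ref{sylow}), the disposal of $p=2$, the base change to $K(\eta)$ (Proposition \ref{bc-prop}), and the identification of the invariants at residually semisimple primes with the $A_{p^a-1}$ hypersurface $\Oo[u,v,w]/(vw-u^{p^a})$ are all correct. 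But the proof has a genuine gap exactly at its crux: for the primes $\q\mid p$ at which $\ov{\sigma_0}$ is a nontrivial unipotent, you only announce a plan (``I would put $\sigma_0$ in companion form\dots I expect oddness of $p$ to enter\dots'') without executing it. For $a=1$ you could simply quote Theorem \ref{mixed}; but for $a\geq 2$ neither Theorem \ref{mixed} (the exponent is $p^a$, not $p$) nor Theorem \ref{unramify} ($p$ is totally ramified in $\mathbb{Q}(\eta)$) applies, and your proposed verification --- that the invariant quadratic $Q$ and the norm $N$ of a linear form give a homogeneous system of parameters that is a regular sequence on the special fibre --- is precisely equivalent to the \CM ness being proved and is far from routine: as you yourself note, $S/\pi S$ is a proper finite-colength subring of $\kappa(\q)[X,Y]^{\ov{G_p}}$, its defect is governed by the $p$-torsion module $H^1(G_p,R)$, and nothing in your outline bounds that defect or verifies the regularity of $\bar{Q},\bar{N}$. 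Naming where the difficulty lies is not the same as resolving it.

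For contrast, the paper resolves this case by induction on $p^a$ through the intermediate invariant rings --- this is the entire reason for the class $\F_\Oo$ of \ref{class} and Proposition \ref{incl}. Writing $K=\langle \tau^p\rangle$, induction gives $T^K\in\F_\Oo$; the arithmetic unit computation $\epsilon-\epsilon^{-1}=\pi\theta$ (Lemma \ref{units} / \ref{fact-pr}, and this, via $\epsilon+1$ being a unit, is where oddness of $p$ actually enters --- not through nondegeneracy of $Q$) reduces $\tau$ to either diagonal form, finished by the explicit regular sequence $X^m,Y^m,\pi$ of Lemma \ref{cyclic-lemma}, or to the form with unit off-diagonal entry, where faithfulness of the residual $H/K$-action on $T^K/\pi T^K$ is checked on the element $t=\prod_i \tau^{pi}(Y)$ and Theorem \ref{case-1} (Ellingsrud--Skjelbred spectral sequences together with additive Hilbert 90) applies. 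A further small inaccuracy: at $\q\mid p$ the reduction $\ov{\sigma_0}$ need not be a nontrivial Jordan block, since $\sigma_0$ may be diagonalizable over $A'_\q$ with trivial reduction (then the hypersurface computation still covers it); the correct dichotomy is the paper's Claim-1. Your outline contains no substitute for the inductive and spectral-sequence machinery, so as it stands the argument is incomplete precisely where the theorem's content lies.
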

We make the following:
\begin{conjecture}
Let $A$ be the  ring of integers in a number field and let $G$ be a finite subgroup of $GL_2(A)$ acting linearly on
$R = A[X,Y]$ (and fixing $A$). Then $R^G$ is \CM.
\end{conjecture}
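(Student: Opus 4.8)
The plan is to verify the \CM \ property locally and, at the modular primes, to translate it into a vanishing statement in group cohomology. Since $R^G$ is a three-dimensional normal domain it satisfies Serre's condition $(S_2)$, so $\depth (R^G)_{\M} \ge 2$ at every maximal ideal and the only question is whether the depth is $3$. The maximal ideals that matter are the homogeneous ideals $\M = (\q, X, Y)$ lying over the primes $\q$ of $A$, and what must be shown is $H^2_{\M}(R^G) = 0$ for each such $\M$. At a prime with $\q \nmid |G|$ the order $|G|$ is invertible, the Reynolds operator $\tfrac{1}{|G|}\sum_{g \in G} g$ splits the inclusion $R^G \hookrightarrow R$, so $R^G$ is a direct summand of the regular ring $R$ and is \CM \ there. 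Thus the problem concentrates at the finitely many primes $\q$ dividing some $p \mid |G|$, the genuinely modular situation.

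At such a prime, localize $A$ to a discrete valuation ring and use the Grothendieck spectral sequence $E_2^{p,q} = H^p\big(G, H^q_{\M}(R)\big)$ attached to the composite of $\Gamma_{\M}(-)$ with $(-)^G$. Because $R$ is regular, $H^q_{\M}(R) = 0$ for $q \neq 3$, and a low-degree analysis shows that $H^2_{\M}(R^G)$ is the image of the transgression $d_2$ out of $\Gamma_{\M}\big(H^1(G,R)\big)$. Hence it suffices to prove that the $R^G$-module $H^1(G,R)$ has no nonzero submodule supported at the closed point, i.e. $\Gamma_{\M}\big(H^1(G,R)\big) = 0$; equivalently that some homogeneous element of $\M$ is a nonzerodivisor on $H^1(G,R)$. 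Since $G = \langle \sigma \rangle$ is cyclic, cohomology is periodic and $H^1(G,R) = \ker(N)/\operatorname{image}(\sigma - 1)$ with $N = 1 + \sigma + \cdots + \sigma^{|G|-1}$, which can be computed one graded piece at a time on $\operatorname{Sym}^n(V)$, where $V = AX \oplus AY$.

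In case (1), $\epsilon_1 = \epsilon_2$: as $K$ has characteristic zero, $\sigma$ is semisimple, so equal eigenvalues force $\sigma = \epsilon I$ scalar with $\epsilon \in A$ a primitive $|G|$-th root of unity, and $R^G$ is the $|G|$-th Veronese subring of $A[X,Y]$. Here $\sigma$ acts on $R_n$ by the scalar $\epsilon^n$, the norm $N$ equals $|G|$ in degrees divisible by $|G|$ and $0$ otherwise, and a direct computation gives $H^1(G,R) = \bigoplus_{|G| \nmid n} \big(A/(\epsilon^n - 1)\big) \otimes_A R_n$. Multiplication by $X^{|G|} \in R^G$ is injective on each summand, hence on $H^1(G,R)$, so $\Gamma_{\M}\big(H^1(G,R)\big) = 0$ and $R^G$ is \CM.

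In case (2), $\epsilon_2 = \epsilon_1^{-1}$, one has $\det \sigma = 1$. Diagonalizing over the splitting field $L = K(\epsilon)$ with $\epsilon$ a primitive $|G|$-th root of unity, $\sigma$ acts on the basis $e^i f^{\,n-i}$ of $\operatorname{Sym}^n(V)$ by $\epsilon^{2i-n}$, so the cohomologically relevant indices are those with $|G| \mid 2i - n$. The parity hypothesis is exactly what tames the prime $2$: if $|G|$ is odd then $2 \nmid |G|$ and $2$ is invertible modulo $|G|$, so there is no modular contribution at $2$ and the trivial eigenspaces occur one per residue, again producing a homogeneous nonzerodivisor on $H^1(G,R)$; if $|G| = 2m$ with $m$ odd then $\sigma^m = -I$ is scalar, the Sylow $2$-subgroup is $\langle -I \rangle$ of exponent $2$, so the order-$2$ part is handled as in case (1) while the odd part is non-modular. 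Descending the nonzerodivisor from $L$ back to $A$ then yields $\Gamma_{\M}\big(H^1(G,R)\big) = 0$, hence the \CM \ property. The main obstacle is precisely this case at $p = 2$ together with the descent from $L$ (where $\epsilon \notin A$ in general): when $4 \mid |G|$ the element $\sigma^{|G|/4}$ is a \emph{non-scalar} element of order $4$ with eigenvalues $\{\sqrt{-1}, -\sqrt{-1}\}$, the prime $2$ ramifies in $\mathbb{Q}(\sqrt{-1}) \subseteq L$, and this non-scalar modular element at a ramified prime can create closed-point torsion in $H^1(G,R)$, obstructing depth $3$; verifying that the stated parity hypothesis rules this out uniformly over the relevant primes is the technical heart of the argument.
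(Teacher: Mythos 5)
The statement you are asked to prove is the paper's open \emph{Conjecture}: $G$ is an \emph{arbitrary} finite subgroup of $GL_2(A)$, with no cyclicity, eigenvalue, or parity hypotheses, and the paper itself offers no proof of it --- it proves only the special cases in Theorems \ref{mixed}, \ref{unramify} and \ref{cyclic-thm}. Your argument silently imports the hypotheses of Theorem \ref{cyclic-thm}: midway through you write ``Since $G = \langle \sigma \rangle$ is cyclic'' and then split into exactly the cases $\epsilon_1 = \epsilon_2$ and $\epsilon_2 = \epsilon_1^{-1}$ with $|G|$ odd or twice an odd number. None of this is available under the Conjecture's hypotheses. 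After the (correct) reduction to Sylow subgroups via the transfer, a Sylow $p$-subgroup of $G$ need not be cyclic (for $p = 2$ it can be quaternion or dihedral; for odd $p$ it can be $\Z/p^a \times \Z/p^b$, diagonalizable but non-cyclic, and of exponent $> p$, so not covered by Theorem \ref{mixed} either), and even a cyclic $\sigma$ need not have $\epsilon_2 = \epsilon_1^{\pm 1}$. Your own closing paragraph concedes that when $4 \mid |G|$ your mechanism may genuinely break down; but $4 \mid |G|$ is permitted by the Conjecture. So at best you are outlining Theorem \ref{cyclic-thm}, not the statement in question, and the missing cases are precisely why it remains a conjecture.

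Even judged as an outline of Theorem \ref{cyclic-thm}, there is a concrete gap. Your cohomological frame is sound and parallels the paper's Ellingsrud--Skjelbred machinery: with $H^q_\M(R)$ concentrated in $q=3$, the two spectral sequences do identify $H^2_\M(R^G)$ with $\Gamma_\M(H^1(G,R))$, and your case (1) computation (the Veronese situation, with $X^{|G|}$ a homogeneous nonzerodivisor on $H^1(G,R) = \bigoplus_{|G| \nmid n} R_n/(\epsilon^n - 1)R_n$) is a correct alternative to the paper's explicit regular sequence $X^m, Y^m, \pi$ in Lemma \ref{cyclic-lemma}. But in case (2) your claim that ``the odd part is non-modular'' is false: for each odd $p \mid |G|$ the Sylow $p$-subgroup $\langle \tau \rangle$ has order $p^r$ with $r$ possibly $> 1$ and is modular at every prime above $p$. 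The real difficulty --- to which the paper devotes Sections 8 and 12 --- is that $\tau$ need not be diagonalizable over the localized ring of integers, even though it is diagonalizable over the field $L$: after base change to the Hilbert class field of $\mathbb{Q}(e^{2\pi i/p^r})$ and localization one gets either a diagonal matrix or $\begin{pmatrix} \epsilon & 1 \\ 0 & \epsilon^{-1} \end{pmatrix}$, and the second form forces the paper to invoke Theorem \ref{case-1} together with an induction on $r$ through the intermediate non-polynomial ring $T^K$, $K = \langle \tau^p \rangle$, using the class $\F_\Oo$ of Section 4. Your phrase ``descending the nonzerodivisor from $L$ back to $A$'' does not address this (base change is flat and harmless; integral diagonalizability is the obstruction), and you explicitly defer ``the technical heart'' to unverified work. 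As written, the proposal proves neither the Conjecture nor Theorem \ref{cyclic-thm}.
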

 We now describe in brief the contents of this paper.
In section two we discuss some preliminaries on group cohomology that we need. In section three we prove some general facts about ring of invariants that we
need. We prove Theorem \ref{mixed} by induction on order of $G$. The intermediate rings obtained in induction
are no longer polynomial rings. In section 4 we describe a class of rings to enable this induction. In section five we describe
Ellingsrud-Skjelbred spectral sequences and describe an application which is crucial for us. In section
six we prove Theorem \ref{equi}. In section seven we describe our strategy to prove Theorem \ref{mixed}.
In the next two sections we describe the two cases that essentially arise in our proof of Theorem \ref{mixed}.
In section ten we prove Theorem \ref{mixed}. In section eleven we prove Theorem \ref{unramify}.
Finally in section twelve we prove Theorem \ref{cyclic-thm}.
\section{Some preliminaries on group cohomology}
In this section we discuss some preliminaries on group cohomology that we need. We believe all the assertions in this section are already known (perhaps even more generally than what is given in this section. We give a few proofs due to lack of a suitable reference.
\s  \label{setup-gc-intro}Let $S$ be a commutative Noetherian ring. 
 Let $G$ be a finite group. Let $S[G]$ be the group ring and let $Mod(S[G])$ be the category of left $S[G]$-modules. Let $(-)^G$ be the functor of $G$-fixed points. Let $H^n(G,-)$ be the $n^{th}$ right derived functor of
$(-)^G$. We call $H^n(G, M)$ the \emph{$n^{th}$-cohomology group} of $G$ with coefficients in $M$). Note this cohomology module also depends on $S$ which we have suppressed. We note that
$H^n(G, M) = \Ext^i_{S[G]}(S, M)$.

\begin{remark}
(1) If $M$ is a finitely generated  left $S[G]$-module then $H^n(G, M)$ are finitely  generated  $S$-modules. This can be easily seen by taking a graded free resolution of $S$ consisting of finitely generated free   $S[G]$-modules.

(2) We note that $\Z[G]\otimes_\Z S = S[G]$.

(3) $S[G]$ is a two sided Noetherian ring,
\end{remark}

We first discuss the behaviour of group cohomology with respect to localization.
\begin{proposition}
\label{localization}(with hypotheses as in \ref{setup-gc-intro})  Let $M$ be a $S[G]$-module.  Let $W$ be a multiplicatively closed subset of $S$. Then $W^{-1}M$ is a $W^{-1}S[G]$-module and $W^{-1}H^i(G, M) \cong H^i(G, W^{-1}M)$ for all $i \geq 0$.
\end{proposition}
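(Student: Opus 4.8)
The plan is to compute both sides with a single finitely generated free resolution and then push the localization through. Since $S$ is Noetherian, $S[G]$ is (two-sided) Noetherian and $S$ is a finitely generated $S[G]$-module, so $S$ admits a resolution $F_\bullet \to S \to 0$ in which each $F_i$ is a finitely generated free $S[G]$-module (for instance the bar resolution, whose terms are finitely generated because $G$ is finite). By definition $H^i(G,M)$ is the $i$-th cohomology of the complex $\Hom_{S[G]}(F_\bullet, M)$.

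Next I would record the behaviour of the ingredients under localization. Viewing $W$ as a central multiplicatively closed subset of $S[G]$, central localization gives $W^{-1}(S[G]) = (W^{-1}S)[G]$, using $S[G] = S \otimes_\Z \Z[G]$. Since localization is exact and carries finitely generated free $S[G]$-modules to finitely generated free $W^{-1}S[G]$-modules, the complex $W^{-1}F_\bullet \to W^{-1}S \to 0$ is a finitely generated free resolution of $W^{-1}S$ over $W^{-1}S[G]$; hence $\Ext^i_{W^{-1}S[G]}(W^{-1}S, -)$, i.e. $H^i(G,-)$ computed over the base $W^{-1}S$, is the cohomology of $\Hom_{W^{-1}S[G]}(W^{-1}F_\bullet, -)$.

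The crux is the natural isomorphism
$$W^{-1}\Hom_{S[G]}(F, M) \;\cong\; \Hom_{W^{-1}S[G]}(W^{-1}F, W^{-1}M)$$
for every finitely generated free $S[G]$-module $F$. For $F = S[G]$ both sides are canonically $W^{-1}M$, and the general case follows by taking finite direct sums; naturality in $F$ lets these isomorphisms assemble into an isomorphism of complexes $W^{-1}\Hom_{S[G]}(F_\bullet, M) \cong \Hom_{W^{-1}S[G]}(W^{-1}F_\bullet, W^{-1}M)$. This step is the main obstacle, and it is exactly where finite generation of the $F_i$ is used: $\Hom$ commutes with localization only when the source is finitely presented, which is guaranteed here by the Noetherian hypothesis.

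Finally I would assemble the pieces. Because localization is exact it commutes with the cohomology of a complex, so
$$W^{-1}H^i(G,M) = W^{-1}H^i\big(\Hom_{S[G]}(F_\bullet, M)\big) \cong H^i\big(W^{-1}\Hom_{S[G]}(F_\bullet, M)\big).$$
Applying the isomorphism of complexes from the previous step and then the fact that $W^{-1}F_\bullet$ computes cohomology over $W^{-1}S$ yields $H^i\big(W^{-1}\Hom_{S[G]}(F_\bullet, M)\big) \cong H^i(G, W^{-1}M)$, which is the desired conclusion. All the identifications are natural, so they are automatically compatible as $i$ varies.
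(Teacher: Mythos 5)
Your proof is correct and takes essentially the same route as the paper: both localize a finitely generated free resolution $\mathbb{F}$ of $S$ over $S[G]$ (using that $W$ is central, so $W^{-1}(S[G]) = (W^{-1}S)[G]$) and compare the resulting complexes. You simply spell out the step the paper leaves implicit, namely the natural isomorphism $W^{-1}\Hom_{S[G]}(F, M) \cong \Hom_{W^{-1}S[G]}(W^{-1}F, W^{-1}M)$ for finitely generated free $F$, which is exactly where finite generation of the resolution is used.
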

\begin{proof}
We note that $S$ is contained in the center of $S[G]$.
  It is clear that  $W^{-1}M$ is a $W^{-1}S[G]$-module.  Let $\mathbb{F}$ be a free resolution of $S$ with finitely generated free $S[G]$-modules. Then it is clear that
  $W^{-1}\mathbb{F}$ is a free resolution of $W^{-1}S$ with finitely generated free $W^{-1}S[G]$-modules.
  It follows that
  $W^{-1}H^i(G, M) \cong H^i(G, W^{-1}M)$ for all $i \geq 0$.
\end{proof}

\begin{remark}\label{grading}
(1) Let $S = \bigoplus_{n \geq 0}S_n$ be  a $\mathbb{N}$-graded Noetherian ring.  Note $S[G]$ is also $\mathbb{N}$-graded (with $\deg \sigma = 0$ for all $\sigma \in G$) Noetherian ring.

(2)  If $M$ is a finitely generated graded left $S[G]$-module then $H^n(G, M)$ are finitely generated graded $S$-module. This can be easily seen by taking a graded free resolution of $S$ consisting of finitely generated free  graded $S[G]$-modules.

(3) If $\mathbb{F} $ is a free resolution of $\Z$ as a $\Z[G]$-module consisting of finitely generated free $\Z[G]$-modules then $\mathbb{G} = \mathbb{F}\otimes_\Z S$ is a graded free resolution of $S$ as an $S[G]$-module. We note that $\mathbb{G}_i$ for all $i$ is isomorphic to finitely many copies of $S[G]$ (as graded $S[G]$-module).  Note there are no shifts involved in $\mathbb{G}_i$.

(4) We note that if $M = \bigoplus_{n \in \Z}M_n$ is a $S[G]$-module then each $M_n$ is a $S_0[G]$-module.

\end{remark}

The next result is needed later in the paper.
\begin{proposition}
\label{non-neg} Let $S = \bigoplus_{n \geq 0}S_n$ be a graded ring. Let $S[G]$ be graded as above. Let $M = \bigoplus_{n \geq 0}M_n$ be a graded $S[G]$-module. Then $H^i(G, M)_n = 0$ for $n < 0$ for all $i \geq 0$.
\end{proposition}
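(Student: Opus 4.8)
The plan is to compute $H^i(G,M)$ via $\Ext$ using the specific graded free resolution furnished by Remark~\ref{grading}(3), and to observe that applying $\Hom_{S[G]}(-,M)$ to a resolution whose terms are generated in degree $0$ produces a cochain complex that has no strictly negative graded pieces.

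First I would recall from \ref{setup-gc-intro} that $H^i(G,M)=\Ext^i_{S[G]}(S,M)$, so these cohomology modules are the cohomology of the complex $\Hom_{S[G]}(\mathbb{G},M)$, where $\mathbb{G}$ is any graded free resolution of $S$ by finitely generated free graded $S[G]$-modules. The essential point is to choose $\mathbb{G}$ well. By Remark~\ref{grading}(3) I take $\mathbb{G}=\mathbb{F}\otimes_\Z S$, where $\mathbb{F}$ is a free resolution of $\Z$ over $\Z[G]$ by finitely generated free modules; then each $\mathbb{G}_i$ is isomorphic to a finite direct sum of copies of $S[G]$ \emph{with no shifts}, i.e.\ $\mathbb{G}_i\cong S[G]^{b_i}$ with all free generators sitting in degree $0$.

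Next I would analyze $\Hom_{S[G]}(\mathbb{G}_i,M)$ as a graded $S$-module. Because $S[G]$ is generated in degree $0$, evaluation at the generator $1$ gives a graded isomorphism $\Hom_{S[G]}(S[G],M)\cong M$ with no shift: a graded homomorphism of degree $n$ is determined by the image of $1$, which must lie in $M_n$. Hence $\Hom_{S[G]}(\mathbb{G}_i,M)\cong M^{b_i}$ as graded $S$-modules, again without shift. Since $M=\bigoplus_{n\ge 0}M_n$ is concentrated in non-negative degrees, so is every term of the cochain complex $\Hom_{S[G]}(\mathbb{G},M)$.

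Finally, the differentials of this complex are homogeneous of degree $0$, so passing to cohomology respects the grading and cannot manufacture strictly negative graded components from a complex that has none. Therefore $H^i(G,M)_n=0$ for all $n<0$ and all $i\ge 0$. The only place where any care is needed is the bookkeeping with shifts: the whole argument hinges on Remark~\ref{grading}(3) guaranteeing a resolution whose free modules are generated in degree $0$, and if one instead used an arbitrary graded free resolution (carrying shifts) the vanishing would not be immediate. With that particular resolution in hand, the conclusion is essentially a formal consequence of the non-negativity of $M$.
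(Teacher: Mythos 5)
Your proposal is correct and follows exactly the paper's argument: the paper likewise invokes the unshifted resolution $\mathbb{G}=\mathbb{F}\otimes_\Z S$ from Remark~\ref{grading}(3) and concludes that $H^i(G,M)$ is a sub-quotient of finitely many copies of $M$, hence vanishes in negative degrees. Your write-up merely makes explicit the identification $\Hom_{S[G]}(S[G],M)\cong M$ (without shift) and the degree-$0$ differentials, which the paper leaves implicit.
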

\begin{proof}
  Let $\mathbb{G}$ be the free resolution of $S$ as a $S[G]$-module (given in \ref{grading}(3)). Then note that $H^i(G,M)$ is a sub-quotient of finitely many copies of $M$. The result follows.
\end{proof}

\s \label{cyclic} Now assume $G = <\sigma>$ be cyclic of order $m \geq 2$.  Set $\Tr = 1 + \sigma + \cdots + \sigma^{m-1}$.
(1) By \cite[6.2.1]{W} we have the following  $2$-periodic free resolution of $\Z$ as a $\Z[G]$-module:
$$ \cdots \Z[G] \xrightarrow{\Tr} \Z[G] \xrightarrow{\sigma -1} \Z[G]  \xrightarrow{\Tr} \Z[G] \xrightarrow{\sigma -1} \Z[G]  \xrightarrow{\epsilon} \Z \rt 0.$$

(2) Let $S =\bigoplus_{n \geq 0}S_n$ be a  Noetherian graded ring. As discussed in \ref{grading}(3) we have the following resolution of $S$ as an $S[G]$-module
$$ \cdots S[G] \xrightarrow{\Tr} S[G] \xrightarrow{\sigma -1} S[G]  \xrightarrow{\Tr} S[G] \xrightarrow{\sigma -1} S[G]  \xrightarrow{\epsilon} S \rt 0.$$

(3) If $M$ is a graded $S[G]$-module then note that $H^i(G, M)$ is the $i^{th}$-cohomology of the complex
$$ 0 \rt M \xrightarrow{\sigma -1} M \xrightarrow{\Tr} M \xrightarrow{\sigma -1} M \xrightarrow{Tr} \cdots. $$

As an immediate consequence of \ref{cyclic}(3) we get the following result.
\begin{proposition}\label{comp}
  Let $G = < \sigma >$ be a finite cyclic group of order $m \geq 2$. Let $M  = \bigoplus_{n \in \Z} M_n$ be  a graded $S[G]$-module. Let $H^i(G, M_n)$ be the $i^{th}$- group cohomology of $M_n$ considered as a $S_0[G]$-module.
  Then $H^i(G, M) = \bigoplus_{n \in \Z}H^i(G, M_n).$
\end{proposition}

The following result is needed later.
\begin{proposition}\label{trivial}
  Let $G = < \sigma >$ be a finite cyclic group of order $p$ ( a prime). Let $M  = \bigoplus_{n \in \Z} M_n$ be  a graded $S[G]$-module. Assume the action of $G$ on $M$ is trivial
  and $p M = 0$
  Then $H^i(G, M) = M$ for all $i \geq 0$.
\end{proposition}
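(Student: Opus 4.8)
The plan is to compute everything directly from the two-periodic complex for cyclic groups recorded in \ref{cyclic}(3), which identifies $H^i(G,M)$ with the cohomology of
$$ 0 \to M \xrightarrow{\sigma -1} M \xrightarrow{\Tr} M \xrightarrow{\sigma -1} M \xrightarrow{\Tr} \cdots. $$
The key observation I would make is that, under the two hypotheses, \emph{both} differentials in this complex are the zero map. First, since $G$ acts trivially on $M$, the generator $\sigma$ acts as the identity, so $\sigma - 1$ acts as $0$. Second, $\Tr = 1 + \sigma + \cdots + \sigma^{p-1}$ acts as multiplication by $p$ (each $\sigma^j$ being the identity on $M$), and since $pM = 0$ this is likewise the zero map.

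Once the two differentials are seen to vanish, the conclusion is immediate from reading off cohomology. At spot $0$ we get $H^0(G,M) = \ker(\sigma - 1) = M$; and for each $i \geq 1$ the cohomology is $\ker/\image$, where the outgoing differential is zero (so the kernel is all of $M$) and the incoming differential is zero (so the image is $0$), whence $H^i(G,M) = M$. This yields $H^i(G,M) = M$ for all $i \geq 0$, as claimed.

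I do not expect a genuine obstacle here: the entire content lies in recognizing that the trace operator reduces to multiplication by $p$ on a module with trivial $G$-action, which is then annihilated by the hypothesis $pM = 0$. If one wished to organize the computation degree by degree, Proposition \ref{comp} would reduce the statement to each $M_n$ viewed as an $S_0[G]$-module; but this refinement is unnecessary, since the differentials vanish on all of $M$ simultaneously.
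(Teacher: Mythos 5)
Your proof is correct and is essentially identical to the paper's: the paper's own one-line argument also observes that $\sigma - 1$ and $\Tr$ act as zero on $M$ (the latter because $\Tr$ reduces to multiplication by $p$ under the trivial action, killed by $pM = 0$) and reads off the cohomology from the two-periodic complex of \ref{cyclic}(3). Your write-up simply spells out the details the paper leaves implicit.
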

\begin{proof}
  The maps $\sigma - 1 \colon M \rt M$ and $\Tr \colon M \rt M$ are zero. The result follows.
\end{proof}

\section{Some preliminaries on rings of invariants}
In this section we prove several preliminary results on rings of invariants that we need.

\s\label{hilb} We first note that if $G \subseteq GL_n(\mathbb{C})$  is a finite group then it is conjugate to group $H \subseteq GL_n(A)$ where $A$ is the ring of integers in Hilbert class field of $\mathbb{Q}(e^{2\pi i/m})$ where $m$ is the exponent of $G$.
To see this note that as $G$ has exponent $m$, $\mathbb{Q}(e^{2\pi i/m})$  is a splitting field of $G$, see \cite[41.1, p.\ 292]{CR}. It follows $G$ is conjugate to a subgroup of $GL_n(\mathbb{Q}(e^{2\pi i/m}))$.  The result now follows from
  \cite[75.4, p.\ 518]{CR}.  We note that the proof in \cite{CR} only requires an extension $K$ of $\mathbb{Q}(e^{2\pi i/m})$ such that if $I$ is any ideal in $\mathbb{Z}(e^{2\pi i/m})$ then
  $IA$ is principal in the ring of integers $A$ of $K$. This is proved in \cite[20.14]{CR}. However we need that we can choose $K$ to be the Hilbert class field of $\mathbb{Q}(e^{2\pi i/m})$
  as we want $K$ to be an un-ramified extension of $\mathbb{Q}(e^{2\pi i/m})$, see \cite[p.\ 224]{L-ant}.

  The following result is essential in our proofs.
 \begin{theorem}\label{sylow}
 Let $A$ be the ring of integers of a global field. Let $G$ be a finite subgroup of $GL_n(A)$ acting linearly on $R = A[X_1, \ldots, X_n]$ (fixing $A$). For each prime $p$ dividing $|G|$ choose a Sylow $p$-subgroup $H_p$ of $G$.
 If $R^{H_p}$ is \CM\ for all $p$ dividing $|G|$ then $R^G$ is \CM.
 \end{theorem}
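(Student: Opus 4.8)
The plan is to run the standard relative-transfer argument in local cohomology, exploiting the fact that the family of Sylow indices $[G:H_p]$ is setwise coprime. First I would record the structural facts: $R$ is module-finite over $R^G$, so each $R^{H_p}$ (which contains $R^G$ since $H_p \sub G$) is a finitely generated $R^G$-module, and $R^G$ together with each $R^{H_p}$ is a normal domain of dimension $d = n+1$. Since $A$ is Jacobson, so is $R^G$, and every maximal ideal $\M$ of $R^G$ contracts to a maximal ideal $\m = \M \cap A$ of $A$; I would fix such an $\M$ and write $H^i_\M(-)$ for local cohomology over the Noetherian local ring $(R^G)_\M$, recalling that $R^G$ is \CM\ \ff\ $H^i_\M(R^G) = 0$ for all $i < d$ and all maximal $\M$.

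For each prime $p$ dividing $|G|$ the relative transfer $\Tr^G_{H_p} \colon R^{H_p} \rt R^G$, $f \mapsto \sum_{g \in G/H_p} g \cdot f$, is a well-defined $R^G$-linear map whose composite with the inclusion $\iota_p \colon R^G \hookrightarrow R^{H_p}$ is multiplication by $[G:H_p]$ on $R^G$. Applying $H^i_\M(-)$ produces $R^G$-module maps $H^i_\M(R^G) \rt H^i_\M(R^{H_p}) \rt H^i_\M(R^G)$ with composite equal to multiplication by $[G:H_p]$. Because $R^{H_p}$ is \CM\ of pure dimension $d$ and $\M R^{H_p}$ is primary to the finite set of maximal ideals of $R^{H_p}$ over $\M$, each of height $d$, base-independence of local cohomology gives $H^i_\M(R^{H_p}) = 0$ for all $i < d$. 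Hence the composite is zero, so $[G:H_p]$ annihilates $H^i_\M(R^G)$ for every $i < d$. The indices $\{[G:H_p] : p \mid |G|\}$ have greatest common divisor $1$, since $[G:H_p] = |G|/|H_p|$ is prime to $p$ and therefore no prime dividing $|G|$ can divide all of them. Choosing integers $c_p$ with $\sum_p c_p [G:H_p] = 1$ forces $H^i_\M(R^G) = 0$ for all $i < d$, and letting $\M$ range over all maximal ideals of $R^G$ shows $R^G$ is \CM.

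The main obstacle is the local-cohomology step, where mixed characteristic genuinely intervenes: in contrast to the case of a base field of characteristic prime to $|G|$, no single index $[G:H_p]$ is a unit in $A$, so one cannot split $\iota_p$ and conclude from a single Sylow subgroup via a Reynolds operator. The coprimality of the whole family of indices is exactly what compensates for this, and it is essential that each $[G:H_p]$ merely \emph{annihilates}, rather than inverts, the relevant local cohomology. I would also take care over the passage from ``$R^{H_p}$ is \CM\ as a ring'' to the vanishing $H^i_\M(R^{H_p}) = 0$ for $i < d$ of $R^{H_p}$ viewed as an $R^G$-module: this rests on the independence of local cohomology from the ring over which it is computed and on $R^{H_p}$ being equidimensional of dimension $d$ with all maximal ideals over $\m$ of height $d$. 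Verifying that the relative transfer indeed lands in $R^G$ and is $R^G$-linear, and that the composite identity holds, uses only that $R^G$ is the full fixed ring and $G$ acts $A$-linearly. With these points settled the gcd argument is purely formal.
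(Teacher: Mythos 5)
Your proof is correct, but it takes a genuinely different route from the paper's. You work globally: for each $p$ dividing $|G|$ you use the integral relative transfer $\Tr^G_{H_p}$, observe that the composite $R^G \hookrightarrow R^{H_p} \xrightarrow{\Tr^G_{H_p}} R^G$ is multiplication by $[G \colon H_p]$, conclude that each index annihilates $H^i_\M(R^G)$ for $i < d$, and finish with $\gcd\{[G \colon H_p] \mid p \text{ divides } |G|\} = 1$. The paper instead localizes first: it reduces via the $*$-local criterion (exercise 2.1.27 of \cite{BH}) to a graded maximal ideal $\M$, contracts to $P = \M \cap A$, and passes to the DVR $\Oo = A_P$, using Proposition \ref{localization} to commute invariants with localization; there the residue characteristic is a \emph{single} prime $p$, so $[G \colon H_p]$ is a unit and the normalized transfer $\psi^G_{H_p} = \frac{1}{[G \colon H_p]}\Tr^G_{H_p}$ of \ref{transfer} genuinely splits $R^G \hookrightarrow R^{H_p}$, making $H^i_{\M}(R^G)$ a direct summand of $H^i_\M(R^{H_p}) = 0$ (Lemma \ref{sylow-prelim}). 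The trade-offs: your gcd argument never needs any index to be invertible and avoids localization at primes of $A$ entirely, but it puts the burden on the global bookkeeping you rightly flagged — that $A$, hence $R^G$, is Jacobson so every maximal $\M$ contracts to a maximal ideal of $A$; that all maximal ideals of $R^{H_p}$ over $\M$ have height $d = n+1$ (which follows from going-down for the integral extension of normal domains $R^{H_p} \subseteq R$ together with every maximal ideal of $R = A[X_1, \ldots, X_n]$ having height $n+1$); and the splitting of $H^i_{\M R^{H_p}}(R^{H_p})$ over the finitely many maximal ideals above $\M$. In the paper these points are absorbed by the graded/$*$-local framework, and the localized splitting is the form of the transfer that gets reused later in the paper, so the author's formulation buys the stronger local statement that at each prime of $A$ only the Sylow subgroup of the residue characteristic intervenes and $R^G$ is there an $R^G$-module direct summand of $R^{H_p}$. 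Two small points to make explicit in your version: the degenerate case $|G| = 1$ (empty family of indices) is trivial since then $R^G = R$, and your vanishing range $i < d$ certifies Cohen-Macaulayness exactly because $\height \M = d$ at every maximal ideal, which your Jacobson remark supplies.
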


 The proof of Theorem \ref{sylow} is essentially given in \cite[8.3.1]{S}. However the details are a little different. So we give the complete proof.  We also note that all Sylow subgroups of $G$ are conjugate. So if $R^{H_p}$ is \CM\ for one Sylow-$p$ subgroup  then  $R^K$ is \CM \ for any Sylow $p$-subgroup $K$ of $G$.

 \s\label{transfer}
 Let $(\Oo, \pi)$ be a DVR such that $\Oo/(\pi)$ is a field of characteristic $p$. Let $G$ be a finite subgroup of $GL_n(\Oo)$ acting linearly on $R = A[X_1, \ldots, X_n]$ (fixing $\Oo$). Let $H$ be a $p$-Sylow subgroup of $G$. Define
 \begin{align*}
  \psi^G_H &\colon R^H \rt R^G, \\
  r  &\mapsto \frac{1}{|G \colon H|}\sum_{gH \in G/H} gr.
 \end{align*}
See \cite[2.4]{S} which shows that the above definition  is independent of the choice of elements representing the cosets $G/H$. Furthermore note that $p$ does not divide $|G \colon H|$ and so it is a unit in $\Oo$. Furthermore the following properties of $\psi^G_H$ are established in \cite[2.4]{S}:
\begin{enumerate}
 \item $\psi$ is $R^G$-linear.
 \item $\psi^G_H$ is a splitting of the inclusion $R^G \hookrightarrow R^H$.
\end{enumerate}

\s Recall we say a graded ring $S = \bigoplus_{n \geq 0}S_n$ to be $*$-local if it has a unique graded maximal homogeneous ideal $\M$. Note in the case under consideration $S_0$ is local (say with maximal ideal $\m_0$) and  $\M = \m_0 + S_+$. It is well-known that $S$ is \CM \ if and only if $S_\M$ is \CM \  (this is exercise 2.1.27 in \cite{BH}). In our case $\Oo[X,Y]$ and $\Oo[X, Y]^G$ are both $*$-local.
 Next we show
 \begin{lemma}
  \label{sylow-prelim}(with hypotheses as in \ref{transfer}). If $R^H$ is \CM \ then $R^G$ is \CM.
 \end{lemma}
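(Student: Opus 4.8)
The plan is to use the splitting map $\psi^G_H$ from \ref{transfer} to realize $R^G$ as a direct summand of $R^H$ as an $R^G$-module, and then to transport the \CM\ property across this summand by a local cohomology (equivalently depth) argument.

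First I would record the structural inputs. By \ref{transfer}(2), the inclusion $R^G \hookrightarrow R^H$ splits as a map of $R^G$-modules, so there is a decomposition $R^H = R^G \oplus C$ of $R^G$-modules, where $C = \ker \psi^G_H$. Next, since $G$ is finite and $R$ is a finitely generated algebra over the Noetherian ring $\Oo$, Noether's finiteness theorem shows $R$ is a module-finite $R^G$-algebra; hence $R^H$ is a finitely generated $R^G$-module. In particular $R^G \sub R^H \sub R$ are integral extensions, so $\dim R^G = \dim R^H =: d$. Both $R^G$ and $R^H$ are $*$-local (as noted just before the statement); let $\M$ and $\N$ denote their graded maximal ideals, so that $\M = (\pi) + (R^G)_+$ and $\N = (\pi) + (R^H)_+$.

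Since $R^H$ is module-finite over $R^G$, the extended ideal $\M R^H$ is $\N$-primary, so the local cohomology of the $R^H$-module $R^H$ supported at $\M$ (computed over $R^G$) agrees with that supported at $\N$ (computed over $R^H$): $H^i_{\M}(R^H) \cong H^i_{\N}(R^H)$ for all $i$, local cohomology depending only on the module and the radical of the supporting ideal. Because $R^H$ is \CM\ of dimension $d$, these groups vanish for $i < d$. Now additivity of local cohomology on the decomposition $R^H = R^G \oplus C$ gives $H^i_{\M}(R^H) \cong H^i_{\M}(R^G) \oplus H^i_{\M}(C)$, so each $H^i_{\M}(R^G)$ is a direct summand of a group that vanishes for $i < d$; hence $H^i_{\M}(R^G) = 0$ for $i < d$. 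Equivalently $\depth R^G \geq d = \dim R^G$, forcing equality. By the criterion that a $*$-local ring is \CM\ if and only if its localization at the graded maximal ideal is \CM\ (Exercise 2.1.27 of \cite{BH}), we conclude that $R^G$ is \CM.

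The routine parts are the two structural inputs (the splitting from \ref{transfer} and Noether finiteness) together with the dimension equality coming from integrality. The step deserving the most care, and which I expect to be the main obstacle, is the comparison of local cohomology over the two rings, namely justifying $H^i_{\M}(R^H) \cong H^i_{\N}(R^H)$ and the behaviour of depth under the direct summand decomposition in the graded ($*$-local) framework. One clean way to package this is to localize at $\M$ (respectively $\N$) and argue over honest local rings, using that the depth of a finite module is the least degree in which its local cohomology is nonzero, and that local cohomology commutes with finite direct sums.
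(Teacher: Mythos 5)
Your proposal is correct and takes essentially the same route as the paper: the paper likewise uses the transfer $\psi^G_H$ to exhibit $R^G$ as an $R^G$-module retract of $R^H$ (phrased via the composite-identity sequence $R^G \hookrightarrow R^H \xrightarrow{\psi^G_H} R^G$ rather than your explicit decomposition $R^H = R^G \oplus C$, which is equivalent), notes $\sqrt{\M_G R^H} = \M_H$ so that local cohomology of $R^H$ at $\M_G$ computes the right thing, and deduces $H^i_{\M_G}(R^G) = 0$ below the dimension from the \CM\ property of $R^H$. Your additional care about the independence of local cohomology from the base ring and the $*$-local criterion is exactly the justification implicit in the paper's argument.
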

 \begin{proof}
  Let $\M_G$ be the $*$-maximal ideal of $R^G$. Notice as $R^H$ is a finite $R^G$-module we get $\sqrt{\M_G} = \M_H$ the $*$-maximal ideal of $R^H$. As $R^H$ is \CM \ it follows that $H^i_{\M_G}(R^H) = 0$ for $i \leq n$. We have a sequence of $R^G$-linear maps
  $$ R^G   \hookrightarrow R^H \xrightarrow{\psi^G_H} R^G $$
  such that the composite is identity. As local-cohomology is $R^G$-linear we get
  a sequence of $R^G$-linear maps for all $i$
  $$ H^i_{\M_G}(R^G)  \rightarrow H^i_{\M_G}(R^H) \xrightarrow{H^i(\psi^G_H)} H^i_{\M_G}(R^G) $$
  such that the composite is identity. It follows that $H^i_{\M_G}(R_G) = 0$ for $i \leq n$. Thus $R^G$ is \CM.
 \end{proof}

We now give
\begin{proof}
[Proof of Theorem \ref{sylow}] Let $\M$ be a graded  maximal ideal of $R^G$. Then note that $\M = P \oplus R^G_+ $ where $P$ is a maximal ideal of $\A$. We wish to show $R^G_\M$ is \CM.  Recall $(R^G)_0 = A$. By \ref{localization}  we have $(R^G)_P  \cong (R_P)^G$. It suffices to prove $(R_P)^G$ is \CM. This follows from Lemma \ref{sylow-prelim}.
\end{proof}

\s \label{base-change} Next we show that \CM \ property of ring of invariants behave well \wrt \ base change. More precisely let $A \subseteq B$ be two ring of integers of global fields such that $B$ is a finite $A$-module. We note that $GL_n(A) \subseteq GL_n(B)$. So if $G$ is a subgroup of $GL_n(A)$ then it can be naturally considered as a subgroup of $GL_n(B)$. Let $G$ be a finite subgroup of $GL_n(A)$. Let $R = A[X_1, \ldots, X_n]$ and let $G$ act linearly on $R$. Also let $T = B[X_1, \ldots, X_n]$ and let $G$ act linearly on $T$. We assert
\begin{proposition}\label{bc-prop}
$R^G$ is \CM \ if and only if $T^G$ is \CM.
\end{proposition}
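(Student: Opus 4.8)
The plan is to realize $T$ as a flat base change of $R$ and to show that the formation of $G$-invariants commutes with this base change, thereby reducing the statement to the standard behaviour of the \CM \ property under a finite faithfully flat extension.

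First I would record that $B$ is a flat—indeed faithfully flat—$A$-module. Since $A$ is the ring of integers of a global field it is a Dedekind domain, and $B$ is a finitely generated torsion-free $A$-module (it is a domain containing $A$). A finitely generated torsion-free module over a Dedekind domain is projective, hence flat; and $A \sub B$ is integral, so $\Spec B \to \Spec A$ is surjective, which gives faithful flatness. Writing $T = B \otimes_A R$ with $G$ acting through the second factor (and fixing $B$), the invariants $R^G$ form the kernel of the $A$-linear map $R \to \bigoplus_{g \in G} R$, $r \mapsto (gr - r)_{g}$. Since $-\otimes_A B$ is exact it preserves this kernel, and since tensor commutes with direct sums (here a finite one) the resulting map is $t \mapsto (gt-t)_g$ on $T$. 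This yields the key identity
\[
T^G = R^G \otimes_A B .
\]
In particular $R^G \hookrightarrow T^G$ is a finite, faithfully flat ring extension with $\dim T^G = \dim R^G$.

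Next I would analyse the fibres of $R^G \to T^G$. For a prime $\q$ of $R^G$ lying over $\p = \q \cap A$, the identity above gives
\[
T^G \otimes_{R^G} \kappa(\q) \;\cong\; \kappa(\q) \otimes_{\kappa(\p)} \big(B \otimes_A \kappa(\p)\big),
\]
and $B \otimes_A \kappa(\p) = B/\p B$ is a finite-dimensional $\kappa(\p)$-algebra, so after tensoring with $\kappa(\q)$ it stays Artinian; in particular every fibre is \CM. Now fix a maximal ideal $\M$ of $T^G$ and put $\N = \M \cap R^G$. The induced map $(R^G)_\N \to (T^G)_\M$ is a flat local homomorphism of Noetherian local rings whose closed fibre is (a localization of) an Artinian ring, hence \CM. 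By the standard criterion (additivity of depth and of dimension along a flat local map), $(T^G)_\M$ is \CM \ if and only if $(R^G)_\N$ is \CM.

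Finally I would assemble the two implications. If $R^G$ is \CM \ then each $(R^G)_\N$ is \CM, hence each $(T^G)_\M$ is \CM, so $T^G$ is \CM. Conversely, if $T^G$ is \CM, then for each maximal ideal $\N$ of $R^G$ faithful flatness (together with finiteness, so that a prime of $T^G$ over the maximal ideal $\N$ is itself maximal) produces a maximal ideal $\M$ of $T^G$ with $\M \cap R^G = \N$, and the criterion forces $(R^G)_\N$ to be \CM; thus $R^G$ is \CM. The only slightly delicate point is the first step—verifying that invariants commute with the base change—which rests precisely on the flatness of $B$ over $A$; once $T^G = R^G \otimes_A B$ is in hand, everything reduces to the well-known transfer of the \CM \ property across a finite faithfully flat map with \CM \ fibres.
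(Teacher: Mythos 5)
Your proposal is correct and follows essentially the same route as the paper: both identify $T^G$ as a (faithfully) flat base change of $R^G$ using flatness of $B$ over $A$ (the paper does this locally, noting $B_P$ is free over $A_P$ so $T^G_P$ is finite free over $R^G_P$, while you prove the global identity $T^G = R^G \otimes_A B$), and both conclude via the standard transfer of the Cohen--Macaulay property along a flat local map with zero-dimensional fibre. Your explicit verification that invariants commute with flat base change fills in the step the paper leaves implicit, but the argument is the same in substance.
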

\begin{proof}
We note that $T^G$ is a finitely generated $R^G$-module.
Let $\M$ be a maximal homogeneous ideal of $R^G$ and let $\N$ be a maximal ideal of $T^G$ with $\M T^G \subseteq \N$. We show $R^G_\M$ is \CM \ if and only if $T^G_\N$ is \CM. This will prove the result, see  exercise 2.1.27 in \cite{BH}. Let $P = \M \cap A$. Then note that $B_P$ is a finite free $A_P$-module. It follows that $T^G_P$ is a finite free $R^G_P$-module. Thus $R^G_\M \rt T^G_\N$ is a flat local extension. Furthermore note the fiber of this flat extension is zero dimensional. It follows that $R^G_\M$ is \CM \ if and only if $T^G_\N$ is \CM, see \cite[Corollary, p.\ 181]{M}. The result follows.
\end{proof}
To prove Theorem \ref{mixed} we need the following result:
\begin{theorem}\label{red-hilb}
Let $K$ be  the Hilbert class field of $\mathbb{Q}(e^{2\pi i/p})$ where $p$ is a prime and let $\Oo$ be its ring of integers. Suppose the following assertion holds:
for any $p$-group $H \subseteq GL_2(\Oo)$ of exponent $p$ the ring $\Oo[X, Y]^H$ is \CM. Then  the following assertion holds:
Let $L$ is any number field and let $A$ be its ring of integers. Let $G \subseteq Gl_2(A)$ be a finite $p$-group of exponent $p$. Then $A[X, Y]^G$ is \CM.
\end{theorem}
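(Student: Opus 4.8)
The plan is to reduce the case of a general number field $L$ to the case of the Hilbert class field $K$ by combining the integral realization in \ref{hilb} with the base-change invariance of Proposition \ref{bc-prop}. So let $L$ be a number field with ring of integers $A$, and let $G \sub GL_2(A)$ be a finite $p$-group of exponent $p$. Since the exponent of $G$ is $p$, the field $\mathbb{Q}(e^{2\pi i/p})$ is a splitting field for $G$, so by the discussion in \ref{hilb} (applied with $m = p$) the group $G$, viewed as a subgroup of $GL_2(\mathbb{C})$, is conjugate to a subgroup $H \sub GL_2(\Oo)$, where $\Oo$ is the ring of integers of the Hilbert class field $K$ of $\mathbb{Q}(e^{2\pi i/p})$. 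Conjugation preserves both order and exponent, so $H$ is again a $p$-group of exponent $p$; the hypothesis of the theorem therefore applies to $H$ and yields that $\Oo[X,Y]^H$ is \CM. The remaining task is to transport this \CM\ property back to $A[X,Y]^G$.

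To do this I would first descend the conjugacy to $\ov{\mathbb{Q}}$. Both $G$ and $H$ are contained in $GL_2(\ov{\mathbb{Q}})$ and are conjugate in $GL_2(\mathbb{C})$; since the transporter $\{T : TGT^{-1}=H\}$ is a nonempty variety defined over the algebraically closed field $\ov{\mathbb{Q}}$ (alternatively, by the Noether--Deuring theorem applied to the two $2$-dimensional $\ov{\mathbb{Q}}[G]$-modules, which become isomorphic over $\mathbb{C}$), the two groups are already conjugate over $\ov{\mathbb{Q}}$. Hence there is a number field $F$ and a matrix $T \in GL_2(F)$ with $H = T G T^{-1}$. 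I would then fix a single large number field $E \supseteq L K F$ with ring of integers $B$, so that $A \sub B$ and $\Oo \sub B$ and $B$ is module-finite over each. The crucial point is to arrange that the conjugacy is realized \emph{integrally} over $B$, i.e.\ that $H = S G S^{-1}$ for some $S \in GL_2(B)$. Indeed, the standard lattice $B^2$ and the lattice $T^{-1}(B^2)$ are two $G$-stable $B[G]$-lattices in the same representation over $\operatorname{Frac}(B)$; they are isomorphic as $B[G]$-modules exactly when $G$ and $H$ are conjugate by an element of $GL_2(B)$, and the obstruction to this is an ideal-class obstruction. Precisely as in \ref{hilb}, this obstruction is killed after replacing $E$ by an extension over which the relevant ideals become principal (for instance a suitable Hilbert class field of $E$), by \cite[75.4]{CR} together with \cite[20.14]{CR}; I would absorb this extension into $E$ and $B$ from the start. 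Once $S \in GL_2(B)$ is available, the linear change of variables given by $S$ induces a graded $B$-algebra isomorphism $B[X,Y]^G \cong B[X,Y]^H$.

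With this isomorphism the conclusion is immediate from two applications of Proposition \ref{bc-prop}: applied to $A \sub B$ with the group $G$ it gives that $A[X,Y]^G$ is \CM\ if and only if $B[X,Y]^G$ is \CM, and applied to $\Oo \sub B$ with the group $H$ it gives that $\Oo[X,Y]^H$ is \CM\ if and only if $B[X,Y]^H$ is \CM. Since $\Oo[X,Y]^H$ is \CM\ and $B[X,Y]^G \cong B[X,Y]^H$, we deduce that $B[X,Y]^G$ is \CM, and therefore that $A[X,Y]^G$ is \CM, as desired. I expect the main obstacle to be the middle step, namely realizing the conjugacy between $G$ and $H$ --- which a priori exists only over a field --- by an integral matrix over a suitably enlarged ring of integers $B$. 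This is exactly the lattice-theoretic phenomenon handled by the Curtis--Reiner machinery invoked in \ref{hilb}, and the exponent-$p$ hypothesis is precisely what guarantees that the relevant cyclotomic field is $\mathbb{Q}(e^{2\pi i/p})$, so that the target ring is the $\Oo$ appearing in the statement.
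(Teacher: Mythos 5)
Your overall skeleton is the same as the paper's: pass to a composite field $E \supseteq KL$ with ring of integers $B$, use \ref{hilb} to replace $G$ by a conjugate $H \sub GL_2(\Oo)$ of exponent $p$, apply the hypothesis to $H$, and transfer the \CM\ property back via two applications of Proposition \ref{bc-prop}. You have also correctly isolated the delicate point, which the paper's own proof disposes of in one line (``As $G$ is conjugate to $H$ we get that $B[X,Y]^G$ is \CM''): a conjugacy defined only over a field does not identify $B[X,Y]^G$ with $B[X,Y]^H$, so one needs the conjugating matrix to lie in $GL_2(B)$, equivalently a $B[G]$-isomorphism of the lattices $B^2$ and $T^{-1}(B^2)$. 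Your descent of the conjugacy from $\mathbb{C}$ to a number field (Noether--Deuring) is fine.

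The gap is in your repair of that point. You assert that the obstruction to $B[G]$-isomorphism of $B^2$ and $T^{-1}(B^2)$ is ``an ideal-class obstruction'' which dies after a further extension making ideals principal, ``precisely as in \ref{hilb}.'' But the mechanism of \cite[20.14, 75.4]{CR} kills the Steinitz class of a \emph{single} lattice (it makes one given lattice free after base change); it does not make two $G$-stable lattices spanning the same rational representation isomorphic. That is a genus obstruction at the primes above $p$, and it survives \emph{every} base change of rings of integers. Concretely: let $\zeta$ be a primitive $p$-th root of unity and let $\Oo$ be local at a prime over $p$, so $\zeta \equiv 1$ modulo the maximal ideal. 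The exponent-$p$ subgroups of $GL_2(\Oo)$ generated by $\begin{pmatrix} \zeta & 0 \\ 0 & \zeta^{-1} \end{pmatrix}$ and by $\begin{pmatrix} \zeta & 1 \\ 0 & \zeta^{-1} \end{pmatrix}$ are conjugate over the fraction field, but are not conjugate in $GL_2(\Oo')$ for \emph{any} extension DVR $\Oo'$: modulo the maximal ideal the first reduces to the identity while the second reduces to a nontrivial unipotent, and reduction commutes with integral conjugation. (This trivial-versus-injective dichotomy for the map $G \rt Aut^*(R/\pi R)$ is exactly why the paper must treat Case 1 and Case 2 separately in sections \ref{case-1-sect} and \ref{case-2-sect}.) Since nothing in \ref{hilb} guarantees that the $H$ it produces stabilizes a lattice in the same genus as your $B^2$, your step ``arrange $H = SGS^{-1}$ with $S \in GL_2(B)$ after enlarging $E$'' can fail irreparably once $H$ has been fixed in advance, no matter how far $E$ is enlarged. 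What the statement's universal quantifier over $H$ is there for is precisely this: one must produce \emph{some} exponent-$p$ subgroup $H \sub GL_2(\Oo)$ adapted to the lattice $B^2$ (choosing the $G$-stable lattice in the \ref{hilb}/Curtis--Reiner construction so that its extension to $B$ is $B[G]$-isomorphic to $B^2$), rather than force an integral conjugation onto a pre-chosen $H$. The paper's proof is silent on this subtlety too, but as written your justification of the key middle step is incorrect.
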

\begin{proof}
  Let $E$ be the composite of $K$ and $L$ and let $B$ be its ring of integers. By Proposition  \ref{bc-prop} it suffices to prove $B[X, Y]^G$  is \CM. By \ref{hilb} $G \subseteq Gl_2(B)$ is conjugate to a subgroup $H \subseteq Gl_2(\Oo)$. By our assumption $\Oo[X,Y]^H$ is \CM. By \ref{bc-prop} we get that $B[X, Y]^H$ is \CM. As $G$ is conjugate to $H$ we get that $B[X,Y]^G$ is \CM.
\end{proof}
\section{A class of graded rings}
We prove Theorem \ref{mixed} by induction on order of $|G|$. The intermediate rings obtained in induction are no longer polynomial rings. We need to work with a more general class of rings to enable the induction.

\s \label{class} Let $(\Oo, (\pi))$ be a DVR. By $\F_{\Oo}$ we denote a class of rings $R$ with following properties.
\begin{enumerate}
  \item $R  = \bigoplus_{n \geq 0}R_n$ is a graded \CM \ ring with $R_0 = \Oo$.
  \item $\dim R = 3$.
  \item $R$ and $R/\pi R$ are domains.
  \item Set $a(R) = \max \{ n \mid H^2_{R_+}(R)_n \neq 0 \}$. Then $a(R) \leq -2$.
\end{enumerate}

Example: The polynomial ring $\Oo[X, Y] \in \F_\Oo$. Later we prove that appropriate invariant rings are also in $\F_{\Oo}$.
The following result is useful.
\begin{proposition}\label{basic}
Let $R \in \F_\Oo$. Then $H^i_{R_+}(R) = 0$ for $i \neq 2$ and $H^2_{R_+}(R) \neq 0$.
\end{proposition}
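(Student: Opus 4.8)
The plan is to determine exactly in which cohomological degrees $H^i_{R_+}(R)$ can be nonzero, using only that $R$ is a three-dimensional graded \CM\ domain with $R_0 = \Oo$ a DVR; the $a$-invariant hypothesis (4) turns out to be unnecessary here and will enter only later.

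First I would pin down the bottom of the range. Since $R/R_+ = R_0 = \Oo$ has dimension $1$ and $R$ is a \CM\ (hence catenary and equidimensional) graded $*$-local domain of dimension $3$, the dimension formula gives $\height R_+ = \dim R - \dim R/R_+ = 2$, and $R_+$ is prime. Because $R$ is \CM, $\grade(R_+, R) = \height R_+ = 2$. As $\grade$ equals the least index $i$ with $H^i_{R_+}(R) \neq 0$, this yields $H^i_{R_+}(R) = 0$ for $i < 2$ together with $H^2_{R_+}(R) \neq 0$. At the top, Grothendieck vanishing gives $H^i_{R_+}(R) = 0$ for $i > \dim R = 3$. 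Thus everything reduces to showing $H^3_{R_+}(R) = 0$.

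For the degree-$3$ part I would pass to the reduction modulo $\pi$. As $R$ is a domain and $\pi \neq 0$, multiplication by $\pi$ is injective, giving $0 \to R \xrightarrow{\pi} R \to R/\pi R \to 0$. Here $\pi$ is a nonzerodivisor on the \CM\ ring $R$, so $R/\pi R$ is \CM\ of dimension $2$, and since $\sqrt{R_+(R/\pi R)} = (R/\pi R)_+$ we get $H^3_{R_+}(R/\pi R) = H^3_{(R/\pi R)_+}(R/\pi R) = 0$ by Grothendieck vanishing. The long exact sequence in local cohomology then produces, in each degree, a surjection $\pi \colon H^3_{R_+}(R) \to H^3_{R_+}(R)$, because its cokernel injects into $H^3_{R_+}(R/\pi R) = 0$. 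Finally I would invoke a graded Nakayama argument over the base: each graded component $H^3_{R_+}(R)_n$ is a finitely generated $R_0 = \Oo$-module (standard for $R_+$-local cohomology of a finitely generated graded module over a Noetherian $R_0$), and surjectivity of multiplication by $\pi$ means $\pi\, H^3_{R_+}(R)_n = H^3_{R_+}(R)_n$; since $\pi$ lies in the maximal ideal of the local ring $\Oo$, Nakayama forces $H^3_{R_+}(R)_n = 0$ for every $n$, whence $H^3_{R_+}(R) = 0$.

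The main obstacle is the top degree $i = 3$: the crude estimate coming from the sequence above only gives $\operatorname{cd}(R_+, R) \leq \operatorname{cd}(R_+, R/\pi R) + 1 = 3$, which is not sharp. Eliminating $i = 3$ genuinely uses the mixed-characteristic geometry — that $\Oo$ is local with $\pi$ in its maximal ideal — through the Nakayama step, and it relies on the finite generation of the graded components of $H^3_{R_+}(R)$ over the (non-Artinian) base $\Oo$, which is the point to justify with care. Geometrically the vanishing $H^3_{R_+}(R) = 0$ reflects the fact that $\operatorname{Proj} R$ is projective over the affine one-dimensional base $\Spec \Oo$ with one-dimensional fibers, hence has cohomological dimension at most $1$; I would, however, present the reduction-modulo-$\pi$ argument as above, since it stays within the elementary toolkit already used in the paper.
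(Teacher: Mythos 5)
Your proof is correct, and it overlaps with the paper's exactly where the real work is: for $H^3_{R_+}(R)=0$ both arguments use the sequence $0 \to R \xrightarrow{\pi} R \to R/\pi R \to 0$, Grothendieck vanishing for the two-dimensional \CM\ quotient $R/\pi R$, finite generation of the graded components $H^3_{R_+}(R)_n$ over $\Oo$ (the same citation \cite[15.1.5]{bs}), and Nakayama over the DVR. Where you diverge is in the remaining degrees: the paper runs this single mod-$\pi$ Nakayama mechanism uniformly for \emph{all} $i \neq 2$ (since $H^i_{(R/\pi R)_+}(R/\pi R)=0$ for every $i \neq 2$), and then extracts $H^2_{R_+}(R)\neq 0$ from the surjection $H^2_{R_+}(R) \twoheadrightarrow H^2_{R_+}(R/\pi R) \neq 0$ afforded by the just-proved vanishing of $H^3_{R_+}(R)$ --- implicitly invoking graded nonvanishing for the $2$-dimensional \CM\ ring over the residue field. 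You instead settle $i \leq 1$ and the nonvanishing at $i=2$ in one stroke via $\grade(R_+,R) = \height R_+ = 2$ (valid since $R$ is \CM\ and $*$-local, so $\height R_+ = \dim R - \dim R/R_+$), together with the characterization of grade as the least nonvanishing index of local cohomology, and you use Grothendieck vanishing for $i > 3$. The trade-off: the paper's proof is more uniform and uses only one mechanism; yours isolates precisely where the mixed-characteristic base enters (only $i=3$), makes the nonvanishing of $H^2$ independent of the mod-$\pi$ reduction, and correctly observes --- as does the paper's proof in effect --- that hypothesis (4) of \ref{class} (the condition $a(R) \leq -2$) plays no role in this proposition.
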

\begin{proof}
We note that $R/\pi R$ is a two dimensional finitely generated algebra over a field $\Oo/(\pi)$. It is also \CM. So we have $H^i_{(R/\pi R)_+}(R/\pi R) = 0$ for $i \neq 2$ and $H^2_{(R/\pi R)_+}(R/\pi R) \neq  0$. We have an exact sequence
$$0 \rt R \xrightarrow{\pi} R \rt R/\pi R \rt 0.$$
Taking cohomology  with respect to $R_+$ we get that for $i \neq  2$ and for all $n \in \Z$ we have
$$H^i_{R_+}(R)_n \xrightarrow{\pi} H^i_{R_+}(R)_n \rt 0. $$
As $H^i_{R_+}(R)_n$ is finitely generated $\Oo$-module, see (see \cite[15.1.5]{bs}), it follows from Nakayama's Lemma that $H^i_{R_+}(R)_n = 0$ for all $n \in \Z$ (when $i \neq 2$).

We also have a sequence
$$H^2_{R_+}(R) \rt H^2_{R_+}(R/\pi R) \rt H^3_{R_+}(R) = 0. $$
It follows that $H^2_{R_+}(R) \neq 0$.
\end{proof}

\s\label{action}\emph{Group actions:} Let $R\in \F_\Oo$. Let $Aut^*(R)$ be the set of automorphism $\sigma \colon R \rt R$ such that
\begin{enumerate}
  \item $\sigma(R_n) \subseteq R_n$ for all $n \geq 0$. Furthermore $\sigma$ on $R_0 = \Oo$ is identity,
  \item The map $R_n \rt R_n$ induced by $\sigma$ is $\Oo$-linear.
\end{enumerate}
We consider only finite subroups of $Aut^*(R)$. We also note that if $\sigma \in Aut^*(R)$ then $\sigma$ induces an automorphism on $R/\pi R$.

\s\label{les} Let $G \subseteq Aut^*(R)$ and let $S = R^G$. Clearly $S$ is a Noetherian domain of dimension three. Furthermore $R$ is a finite $S$-module.
We have an exact sequence of $S[G]$-modules
  $$ 0 \rt R \xrightarrow{\pi} R \rt R/\pi R \rt 0.$$
  Taking invariants we have
  an exact sequence
  \begin{align*}
    0 &\rt S \xrightarrow{\pi} S \rt (R/\pi R)^G \\
    &\rt H^1(G, R) \xrightarrow{\pi}  H^1(G, R)  \rt H^1(G, R/\pi R)  \\
    &\rt H^2(G, R) \xrightarrow{\pi}  H^2(G, R)  \rt H^2(G, R/\pi R) \\
    &\cdots
  \end{align*}

 We prove the following preliminary results of $S$.
\begin{proposition}\label{prelim}
(with hypotheses as above). We have
\begin{enumerate}[\rm (1)]
  \item $S/\pi S$ is a domain of dimension two.
  \item  $H^i_{S_+}(S) = 0$ for $i \geq 3$.
  \item $\sqrt{S_+ R} = R_+$.
  \item $S$ is \CM \ if and only if $\depth_\M H^1(G, R)  > 0$ (where $\M$ is the $*$-maximal ideal of $S$.
\end{enumerate}
\end{proposition}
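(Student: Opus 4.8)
The plan is to prove the four assertions in the order (1), (3), (2), (4); the first three are bookkeeping built on one elementary fact about graded domains, and they feed into the genuine content, which is (4).

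\emph{Parts (1) and (3).} For (1) I would read off from the long exact sequence in \ref{les} that the connecting homomorphism forces $S\cap \pi R=\pi S$, so the ring map $S\to R/\pi R$ induced by $S\hookrightarrow R$ descends to an injection $S/\pi S\hookrightarrow (R/\pi R)^G$. As $R/\pi R$ is a domain (property (3) of $\F_\Oo$), so is its subring $(R/\pi R)^G$, hence so is $S/\pi S$; and since $R$ is module-finite over $S$, $R/\pi R$ is module-finite over $S/\pi S$, giving $\dim S/\pi S=\dim R/\pi R=2$. For (3), the inclusion $S_+\subseteq R_+$ gives $\sqrt{S_+R}\subseteq R_+$. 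For the reverse I would use that $R$ is integral over $S$: any minimal prime $Q$ of the homogeneous ideal $S_+R$ is homogeneous and contracts to the prime $S_+$, so $Q\cap\Oo=0$ and $(R/Q)_0=\Oo$, while $\dim R/Q=\dim S/S_+=1$. The elementary fact that a graded domain $N$ with $N_0$ one-dimensional and $\dim N=1$ has $N_+=0$ (a nonzero homogeneous element of positive degree is transcendental over $N_0$, forcing $\dim N\geq 2$) then gives $Q\supseteq R_+$, hence $Q=R_+$ and $\sqrt{S_+R}=R_+$.

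\emph{Part (2).} Here the goal is to bound the arithmetic rank of $S_+$ by $2$. I would choose a homogeneous system of parameters $\bar f,\bar g$ of the two-dimensional graded domain $S/\pi S$ and lift them to homogeneous $f,g\in S_+$, so that $\sqrt{(f,g,\pi)S}=\M$. The same graded-domain dimension argument as in (3) shows that every minimal prime of $(f,g)S$ is homogeneous and cannot contain $\pi$ (else it would be $\M$, of height $3$), so it is a height-$2$ prime with $(\,\cdot\,)_0=\Oo$ and one-dimensional quotient, hence equals $S_+$; the choice of $\bar f,\bar g$ rules out components of smaller height. Thus $\sqrt{(f,g)S}=S_+$ and $H^i_{S_+}(S)=0$ for $i\geq 3$.

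\emph{Part (4).} Since $\pi$ is a nonzerodivisor on the domain $S$ and lies in $\M$, $S$ is \CM\ iff $S/\pi S$ is, and as $S/\pi S$ is a two-dimensional domain this is equivalent to $H^1_\M(S/\pi S)=0$. Reducing the sequence in \ref{les} modulo $\pi$ gives the short exact sequence $0\to S/\pi S\to (R/\pi R)^G\to H^1(G,R)[\pi]\to 0$, where $[\pi]$ denotes the $\pi$-torsion submodule. The key input is that $(R/\pi R)^G$ is \CM: the ring $R/\pi R$ is a two-dimensional \CM\ graded domain over the field $\Oo/(\pi)$, and after choosing a homogeneous s.o.p.\ $\theta_1,\theta_2\in((R/\pi R)^G)_+$ one shows $\theta_1,\theta_2$ is a regular sequence on $(R/\pi R)^G$ by combining the injection $(R/\pi R)^G/\theta_1(R/\pi R)^G\hookrightarrow (R/\pi R)/\theta_1(R/\pi R)$ (the analogue of the displayed sequence for $\theta_1$) with the fact that $\theta_1,\theta_2$ is regular on the \CM\ ring $R/\pi R$. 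Taking local cohomology of the displayed sequence and using $H^0_\M((R/\pi R)^G)=H^1_\M((R/\pi R)^G)=0$ together with $H^0_\M(S/\pi S)=0$, I obtain $H^1_\M(S/\pi S)\cong\Gamma_\M\big(H^1(G,R)[\pi]\big)$. Finally, since $|G|$ is invertible in $\operatorname{Frac}(\Oo)$, Proposition \ref{localization} shows $H^1(G,R)$ is $\pi$-power torsion, and a short torsion chase gives $\Gamma_\M(H^1(G,R)[\pi])=0$ iff $\Gamma_\M(H^1(G,R))=0$, i.e.\ iff $\depth_\M H^1(G,R)>0$. Combining everything, $S$ is \CM\ iff $\depth_\M H^1(G,R)>0$. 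I expect the main obstacle to be the \CM\ property of the two-dimensional modular invariant ring $(R/\pi R)^G$, where no Reynolds operator is available; the regular-sequence-lifting argument is exactly what circumvents this.
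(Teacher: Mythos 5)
Your proposal is correct, and on the decisive part (4) it is essentially the paper's argument: the same exact sequence from \ref{les}, the same identification of the cokernel of $S/\pi S\hookrightarrow (R/\pi R)^G$ with the $\pi$-torsion $H^1(G,R)[\pi]$ (the paper's module $C$), and the same key input $\depth_\M (R/\pi R)^G\geq 2$, which the paper dismisses as ``elementary'' and you correctly prove via left-exactness of $(-)^G$ applied to multiplication by an invariant parameter. The paper finishes with the depth lemma plus a $\Hom_S(S/\M,-)$ computation; your local cohomology sequence giving $H^1_\M(S/\pi S)\cong \Gamma_\M\bigl(H^1(G,R)[\pi]\bigr)$ is equivalent bookkeeping. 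On (2) and (3) you genuinely diverge: the paper lifts a homogeneous s.o.p.\ of $S/\pi S$ and kills cohomology by graded Nakayama on the finitely generated $\Oo$-modules $H^i_{S_+}(S)_n$ (resp.\ on $S/(u,v)$), citing \cite[15.1.5]{bs}, whereas you bound the arithmetic rank, showing $\sqrt{(f,g)S}=S_+$ and $\sqrt{S_+R}=R_+$ by homogeneous-prime arguments; this avoids the finiteness citation but leaves two steps telegraphic. First, in (3) you must rule out a minimal prime $Q$ of $S_+R$ contracting to $\M$: such a $Q$ would be the graded maximal ideal of $R$ (a homogeneous prime lying over $\M$ makes $R/Q$ an $\mathbb{N}$-graded field, so $Q\supseteq R_+ +\pi R$), which strictly contains the prime $R_+\supseteq S_+R$, contradicting minimality. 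Second, your parenthetical ``transcendental over $N_0$, forcing $\dim N\geq 2$'' is not a valid inference as stated --- a domain containing a two-dimensional subring can itself have smaller dimension (e.g.\ $\operatorname{Frac}(\Oo)[x]\supseteq \Oo[x]$); in the graded situation use instead the chain $0\subsetneq N_+\subsetneq \pi N_0+N_+$. Similarly, in (2) the cleanest way to ``rule out components of smaller height'' is: a minimal prime $\q$ of $(f,g)$ with $\pi\notin\q$ and $S_+\not\subseteq\q$ would have $\dim S/\q\geq 2$, contradicting $\dim S/(f,g)\leq \dim S/(f,g,\pi)+1=1$.

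One assertion in your part (4) is actually false in the stated generality: ``$|G|$ is invertible in $\operatorname{Frac}(\Oo)$.'' Proposition \ref{prelim} is formulated for an arbitrary DVR $\Oo$ and is applied in the proof of Theorem \ref{equi}, where $\Oo$ contains $F_q$, $G$ is a $p$-group, and $|G|=0$ in $\operatorname{Frac}(\Oo)$; there $H^1(G,R)$ need not be $\pi$-power torsion, so the appeal to \ref{localization} does not establish it. Fortunately the claim is superfluous: any $x\in\Gamma_\M(H^1(G,R))$ is killed by a power of $\M$, hence by a power of $\pi$, so taking $k\geq 1$ minimal with $\pi^k x=0$ gives $0\neq \pi^{k-1}x\in \Gamma_\M\bigl(H^1(G,R)[\pi]\bigr)$, and the equivalence $\Gamma_\M\bigl(H^1(G,R)[\pi]\bigr)=0$ iff $\depth_\M H^1(G,R)>0$ holds with no torsion hypothesis at all. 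Deleting that sentence, your proof is complete and valid in the full generality the paper needs.
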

\begin{proof}
  (1) By \ref{les} we have an exact sequence
  $$ 0 \rt S \xrightarrow{\pi} S \rt (R/\pi R)^G.$$
  As  $(R/\pi R)^G$ is  a domain it follows that $S/\pi S$ is a domain. Also as $S$ is three dimensional *-local ring and $\pi$ is in its *-maximal ideal is $S$-regular, it follows that $\dim S/\pi S = 2$.

  (2) We note that $(S/\pi S)_0 = \Oo/(\pi)$ is a field. As $\dim S/\pi S = 2$ it follows that $H^i_{S_+}(S/\pi S) = 0$ for $i \geq 3$.
  We have an exact sequence
$$0 \rt S \xrightarrow{\pi} S \rt S/\pi S \rt 0.$$
Taking cohomology  with respect to $S_+$ we get that for $i \geq  3$ and for all $n \in \Z$ we have
$$H^i_{S_+}(S)_n \xrightarrow{\pi} H^i_{S_+}(S)_n \rt 0. $$
As $H^i_{S_+}(S)_n$ is finitely generated $\Oo$-module, (see \cite[15.1.5]{bs}), it follows from \\ Nakayama's Lemma that $H^i_{S_+}(S)_n = 0$ for all $n \in \Z$ (when $i \geq 3$).

(3) Let $u, v \in S_+$ be homogeneous such that their images in $S/\pi S$ is a homogeneous system of parameters of $S/\pi S$. Set $T = S/(u, v)$. Then as $(T/\pi T)_n  =0$ for $n \gg 0$ it follows that $T_n = 0$ for $n \gg 0$. As $R$ is a finite $S$-module it follows that $(R/(u, v)R)_n = 0$ for $n \gg 0$.  Thus $R_+ \subseteq \sqrt{(u, v)R}$. So $R_+ \subseteq \sqrt{S_+ R}$.
As $S_+R \subseteq R_+$ it follows that $\sqrt{S_+R} \subseteq R_+$. The result follows.

(4)
Note as $S$ is $*$-local, $S$ is \CM \ if and only if $S_\M$ is \CM.
By \ref{les} we have an exact sequence $0 \rt S/\pi S \rt (R/\pi R)^G \rt C \rt 0$ and  a exact sequence $0 \rt C \rt H^1(G, R) \xrightarrow{\pi} H^1(G, R)$.
Note as $\depth_\M R/\pi R \geq 2$ it is elementary to show that $\depth_\M (R/\pi R)^G \geq 2$.

If $\depth_\M H^1(G, R) > 0$ then $\depth_\M C > 0$. By depth Lemma it follows that $\depth_\M S/\pi S \geq 2$. It follows that $\depth_\M S \geq 3$. So $S$ is \CM.

If $S$ is \CM \ then it follows that $\depth_\M S/\pi S = 2$ and so $\depth_\M  C \geq 1$.  We have an exact sequence
$$ 0 \rt \Hom_S(S/\M, C) \rt \Hom_S(S/\M, H^1(G, R)) \xrightarrow{\pi} \Hom_S(S/\M, H^1(G, R)). $$
As $\pi \in \M$ the middle map above is zero. So we get
$$ \Hom_S(S/\M, C) \cong  \Hom_S(S/\M, H^1(G, R)). $$
As $\depth_\M C > 0$ we get that $ \Hom_S(S/\M, C) = 0$. The result follows.
\end{proof}
\section{ Ellingsrud-Skjelbred spectral sequences and an application}
In this section we describe the Ellingsrud-Skjelbred spectral sequences, see \cite{ES} (we follow the exposition given in \cite[8.6]{Lorenz}). We also give an application which is crucial for us.

\s Let $S$ be a commutative Noetherian ring. Let $Mod(A)$ be the category of left $A$-modules.

(1) Let $\A$ be an ideal in $S$. Let $\Gamma_\A(-)$ be the torsion functor associated to $\A$.  Let $H^n_\A(-)$ be the $n^{th}$ right derived functor of
$\Gamma_\A(-)$. 

(2) If $M \in Mod(S[G])$ then note $\Gamma_\A(M) \in Mod(S[G])$.

\s Ellingsrud-Skjelbred spectral sequences  are constructed as follows: Consider the following sequence of functors
\[
(i) \quad   Mod(S[G]) \xrightarrow{(-)^G} Mod(S) \xrightarrow{\Gamma_\A} Mod(S),
\]
\[
(ii)   \quad   Mod(S[G]) \xrightarrow{\Gamma_\A} Mod(S[G]) \xrightarrow{(-)^G} Mod(S)
\]
We then notice
\begin{enumerate}[\rm (a)]
\item
The above compositions are equal.
\item
It is possible to use Grothendieck spectral sequence of composite of functors to both (i) and (ii) above; see
\cite[8.6.2]{Lorenz}.
\end{enumerate}
Following Ellingsrud-Skjelbred  we let $H^n_\A(G,-)$ denote the $n^{th}$ right derived functor of this composite
functor. So by (i) and (ii) we have two first quadrant spectral sequences for each $S[G]$-module $M$
\[
(\alpha)\colon \quad \quad  E_2^{p,q} = H^p_\A(H^q(G, M)) \Longrightarrow H^{p+q}_\A(G,M), \ \text{and}
\]
\[
(\beta)\colon \quad \quad \mathcal{E}_2^{p,q} = H^p(G, H^q_\A(M)) \Longrightarrow H^{p+q}_\A(G,M).
\]

\begin{remark}\label{grading-es}
Ellingsrud-Skjelbred spectral sequences have an obvious graded analogue.
\end{remark}

\s \textit{Application:} Let $(\Oo, \pi)$ be a DVR. Let $R \in \F_\Oo$, see \ref{class}.  Let $G \subset Aut^*(R)$ be a finite group acting  on $R$ (see \ref{action}). Let $S = R^G$ be the ring of invariants. We consider Ellingsrud-Skjelbred spectral sequences  with $\A = S_+$ and $M = R$.  We first prove
\begin{proposition}\label{second-es}
The spectral sequence $\beta$ collapses at $E_2$-stage. We have
\begin{enumerate}[\rm (1)]
  \item $H^0_{S_+}(G, R) = H^1_{S_+}(G, R) = 0$.
  \item For $i \geq 2$ we have $H^i_{S_+}(G, R) = H^{i-2}(G, H^2_{S_+}(R))$.
  \item For all $i$,  $H^i_{S_+}(G, R)_j = 0$ for $j \geq -1$.
\end{enumerate}
\begin{proof}
  We note that as $\sqrt{S_+ R} = R_+$ we get $H^q_{S_+}(R)$ vanishes except at $q = 2$, see \ref{basic}. It follows that $\beta$ collapses at $E_2$ stage. The assertions (1) and (2) follow immediately. To see
  (3) note that the action of $G$ preserves degrees and $H^2_{S_+}(R)_j = 0$ for $j \geq -1$ (see \ref{class}(4)).
\end{proof}
\end{proposition}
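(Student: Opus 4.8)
The plan is to exploit the fact that $R \in \F_\Oo$ forces the local cohomology $H^\bullet_{S_+}(R)$ to be concentrated in a single cohomological degree, which makes the spectral sequence $\beta$ degenerate for purely formal reasons.

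First I would identify $H^q_{S_+}(R)$ with $H^q_{R_+}(R)$. Since $R$ is a finite $S$-module and, by Proposition \ref{prelim}(3), $\sqrt{S_+R} = R_+$, the local cohomology of $R$ computed with support in the ideal $S_+$ agrees with that computed with support in $R_+$, because local cohomology depends only on the radical of the defining ideal. By Proposition \ref{basic} this module vanishes for $q \neq 2$ and is nonzero for $q = 2$. Hence on the $E_2$-page $\mathcal{E}_2^{p,q} = H^p(G, H^q_{S_+}(R))$ the only possibly nonzero row is $q = 2$.

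Next I would read off the collapse and the identifications. The differential $d_r$ on the $E_r$-page has bidegree $(r, 1-r)$, so for $r \geq 2$ it shifts the $q$-coordinate by $1-r \neq 0$; since all nonzero entries lie in the single row $q = 2$, every such differential has either source or target zero. Therefore $\beta$ collapses at $E_2$ and $\mathcal{E}_\infty = \mathcal{E}_2$. The abutment $H^n_{S_+}(G,R)$ then carries a finite filtration with graded pieces $\mathcal{E}_\infty^{p,q}$, $p+q = n$, and as only $q = 2$ survives we obtain $H^n_{S_+}(G,R) = 0$ for $n < 2$, which is (1), and $H^n_{S_+}(G,R) \cong \mathcal{E}_2^{n-2,2} = H^{n-2}(G, H^2_{S_+}(R))$ for $n \geq 2$, which is (2).

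Finally, for (3) I would pass to the grading. Since $a(R) \leq -2$ by \ref{class}(4), we have $H^2_{R_+}(R)_j = 0$ for $j \geq -1$, hence the same for $H^2_{S_+}(R)$ under the identification above. The $G$-action preserves degrees, and by \ref{grading}(3) group cohomology can be computed from a resolution of $S$ by copies of $S[G]$ carrying no degree shifts, so each $H^p(G, H^2_{S_+}(R))$ is a subquotient of finitely many copies of $H^2_{S_+}(R)$ and therefore vanishes in degrees $j \geq -1$; combined with (2) this yields (3). The only genuinely non-formal input is the radical identification $H^q_{S_+}(R) = H^q_{R_+}(R)$, and even that is immediate from Proposition \ref{prelim}(3); everything else is spectral-sequence bookkeeping together with the $a$-invariant bound built into the definition of $\F_\Oo$, so I expect no serious obstacle once those prior results are in hand.
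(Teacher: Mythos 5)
Your proposal is correct and follows essentially the same route as the paper: use $\sqrt{S_+R}=R_+$ (Proposition \ref{prelim}(3)) together with Proposition \ref{basic} to concentrate $\mathcal{E}_2^{p,q}=H^p(G,H^q_{S_+}(R))$ in the single row $q=2$, deduce the collapse and the identifications (1)--(2), and obtain (3) from the bound $a(R)\leq -2$ plus the fact that the $G$-action and the resolution computing group cohomology involve no degree shifts. You have merely made explicit the bookkeeping (differential bidegrees, the filtration on the abutment) that the paper leaves implicit.
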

Next we analyze the spectal sequence $\alpha$.
We show
\begin{proposition}\label{first-es}
We have graded exact sequences
\begin{enumerate}[\rm (1)]
  \item $0 \rt H^0_{S_+}(H^1(G, R)) \rt H^2_{S_+}(S) \rt L \rt 0$ where $L_j = 0$ for $j \geq -1$.
  \item $0 \rt H^0_{S_+}(H^3(G, R)) \rt H^2_{S_+}(H^2(G, R))$.
\end{enumerate}
\end{proposition}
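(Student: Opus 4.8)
The plan is to read both exact sequences off the spectral sequence $\alpha$, namely $E_2^{p,q} = H^p_{S_+}(H^q(G,R)) \Rightarrow H^{p+q}_{S_+}(G,R)$, after pinning down which $E_2$-terms vanish; in its graded form (see \ref{grading-es}) all differentials preserve internal degree, which is what makes the degree bookkeeping work. First I would collect three vanishing facts on the $E_2$-page. \textbf{(i)} The bottom row is $E_2^{p,0} = H^p_{S_+}(S)$, where $H^0_{S_+}(S) = 0$ since $S$ is a domain with $\height S_+ > 0$, and $H^p_{S_+}(S) = 0$ for $p \ge 3$ by \ref{prelim}(2). \textbf{(ii)} For $q \ge 1$ the module $H^q(G,R)$ is annihilated by $|G|$ (the standard fact, via the transfer). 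Writing $|G| = \pi^a u$ in $\Oo$ with $u$ a unit: if $\pi \nmid |G|$ then $H^q(G,R) = 0$, and otherwise $H^q(G,R)$ is a module over $S/\pi^a S$, whence $\dim H^q(G,R) \le \dim S/\pi S = 2$ (the radical of $\pi^a S$ is the prime $\pi S$ by \ref{prelim}(1)); in either case $E_2^{3,q} = H^3_{S_+}(H^q(G,R)) = 0$. \textbf{(iii)} Since $R$ is non-negatively graded, so is each $H^q(G,R)$ by \ref{non-neg}, hence so is its submodule $E_2^{0,q} = \Gamma_{S_+}(H^q(G,R))$; thus $(E_2^{0,q})_j = 0$ for $j < 0$.

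For part (1) the map in question is the differential $d_2\colon E_2^{0,1} \to E_2^{2,0}$, that is $H^0_{S_+}(H^1(G,R)) \to H^2_{S_+}(S)$. Because nothing maps into the column $p = 0$ and the outgoing $d_3$ lands in the negative row $q = -1$, one has $E_\infty^{0,1} = E_3^{0,1} = \ker d_2$; but $E_\infty^{0,1}$ is a subquotient of the abutment $H^1_{S_+}(G,R) = 0$ (\ref{second-es}(1)), so $d_2$ is injective. Its cokernel $L := E_3^{2,0}$ receives and emits no further differentials, so $L = E_\infty^{2,0} = F^2 H^2_{S_+}(G,R) \subseteq H^2_{S_+}(G,R)$; by \ref{second-es}(3) this yields $L_j = 0$ for $j \ge -1$, which is exactly the sequence in (1).

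For part (2) the asserted injection is $d_2\colon E_2^{0,3} \to E_2^{2,2}$, i.e. $H^0_{S_+}(H^3(G,R)) \to H^2_{S_+}(H^2(G,R))$, and I must show $E_3^{0,3} = \ker d_2 = 0$. The only possibly nonzero differentials out of $(0,3)$ are $d_2,d_3,d_4$, so $E_\infty^{0,3} = E_5^{0,3}$. Now $d_4$ targets $E_4^{4,0}$, a quotient of $E_2^{4,0} = H^4_{S_+}(S) = 0$ by (i), and $d_3$ targets $E_3^{3,1}$, a subquotient of $E_2^{3,1} = 0$ by (ii); hence $E_3^{0,3} = E_4^{0,3} = E_5^{0,3} = E_\infty^{0,3}$. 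Finally $E_\infty^{0,3}$ is caught between two degree bounds: viewed inside $E_2^{0,3}$ it vanishes in degrees $< 0$ by (iii), while viewed as the quotient $H^3_{S_+}(G,R)/F^1 H^3_{S_+}(G,R)$ of the abutment it vanishes in degrees $\ge -1$ by \ref{second-es}(3). Together these force $E_\infty^{0,3} = 0$, hence $E_3^{0,3} = 0$ and $d_2$ is injective.

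I expect part (2) to be the genuine obstacle: unlike (1), the relevant $d_2$ is not adjacent to an edge of the spectral sequence, so injectivity cannot be read off from a single abutment group. The crux is to combine the dimension bound of (ii) --- which kills $E_2^{3,q}$ for $q \ge 1$ and thereby collapses the intermediate differentials $d_3,d_4$ out of $(0,3)$ --- with the degree-squeeze on $E_\infty^{0,3}$; arranging both to collapse cleanly is the delicate step.
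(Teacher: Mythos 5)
Your proposal is correct and follows essentially the same route as the paper: part (1) is the low-degree exact sequence $0 \to E_3^{0,1} \to E_2^{0,1} \to E_2^{2,0} \to E_3^{2,0} \to 0$ of the spectral sequence $\alpha$ combined with $H^1_{S_+}(G,R)=0$ and the degree bound of \ref{second-es}(3), and part (2) identifies $E_3^{0,3}=E_\infty^{0,3}$ and kills it by squeezing between the degree bounds of \ref{non-neg} (degrees $\geq 0$) and \ref{second-es}(3) (degrees $\leq -2$). The only cosmetic difference is in disposing of $E_2^{3,1}$ and $E_2^{4,0}$: the paper simply invokes $H^i_{S_+}(-)=0$ for $i \geq 3$ (the ideal $S_+$ has cohomological dimension $\leq 2$, since it is generated up to radical by two elements by the proof of \ref{prelim}(3)), whereas you use \ref{prelim}(2) for the bottom row and the $|G|$-torsion plus Grothendieck-vanishing dimension bound for the rows $q \geq 1$ --- both arguments are valid.
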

\begin{proof}
  (1) By the spectral sequence $\alpha$ we get an exact sequence
  $$ 0 \rt E_3^{0, 1} \rt E_2^{0, 1} \rt E_2^{2,0} \rt E_3^{2,0} \rt 0.$$
  Here $E^{0,1}_2 = H^0_{S_+}(H^1(G, R))$ and $E_2^{2,0} = H^2_{S_+}(S)$. Notice  $E_3^{0, 1} =  E_\infty^{0, 1}$ which being a sub-quotient of $H^1_{S_+}(G, R) = 0$.
  We also have $  E_3^{2,0} =  E_\infty^{2,0}$. The later module  is a sub-quotient of $H^2_{S_+}(G, R)$ which is concentrated in degrees $\leq -2$, see \ref{second-es}. The result follows.

  (2) By the spectral sequence $\alpha$ we get an exact sequence
  $$ 0 \rt E_3^{0, 3} \rt E_2^{0, 3} \rt E_2^{2,2}$$
  Here $E^{0,3}_2 = H^0_{S_+}(H^3(G, R))$ and $E_2^{2,2} = H^2_{S_+}(H^2(G, R))$.
  As $H^i_{S_+}(-) = 0$ for $i \geq 3$ it follows that
   $E_3^{0, 3} =  E_\infty^{0, 3}$. The later module is a sub-quotient of $H^3_{S_+}(G, R)$ which is concentrated in degrees $\leq -2$, see \ref{second-es}.
  We also have $E_3^{0,3}$ is a submodule of $H^0_{S_+}(H^2(G, R))$ which is concentrated in degrees $\geq 0$, see \ref{non-neg}. Thus $E_3^{0,3} = 0$. The result follows.
\end{proof}
The following result is needed later.
\begin{proposition}\label{incl}
  Let $\Oo$ be a DVR of mixed characteristic. Let $R \in \F_\Oo$, see \ref{class}.  Let $G \subset Aut^*(R)$ be a finite group acting  on $R$ (see \ref{action}). Let $S = R^G$ be the ring of invariants. If $S$ is \CM \ then $S \in \F_\Oo$.
\end{proposition}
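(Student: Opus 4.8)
The plan is to verify, one by one, the four defining conditions of $\F_\Oo$ listed in \ref{class} for the ring $S = R^G$. Three of them are already essentially in hand: $S$ is a graded subring of $R$ with $S_0 = R_0^G = \Oo^G = \Oo$ (the $G$-action is trivial on $R_0$ by \ref{action}), it is \CM \ by hypothesis, and $\dim S = 3$ with $S$ a domain by \ref{les}; finally $S/\pi S$ is a domain by \ref{prelim}(1). So the whole content is condition (4), namely $a(S) = \max\{ n \mid H^2_{S_+}(S)_n \neq 0 \} \leq -2$, equivalently $H^2_{S_+}(S)_n = 0$ for all $n \geq -1$.

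To control $H^2_{S_+}(S)$, I would invoke the exact sequence of \ref{first-es}(1),
$$ 0 \rt H^0_{S_+}(H^1(G, R)) \rt H^2_{S_+}(S) \rt L \rt 0, \qquad L_j = 0 \text{ for } j \geq -1. $$
Since $L$ is already concentrated in degrees $\leq -2$, it suffices to prove that the torsion module $H^0_{S_+}(H^1(G,R))$ vanishes; then $H^2_{S_+}(S) \cong L$ and $a(S) \leq -2$ follows at once.

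The main point is therefore to show $H^0_{S_+}(H^1(G,R)) = 0$, and this is where the hypotheses (mixed characteristic, together with $S$ being \CM) enter. First, $H^1(G,R)$ is annihilated by $|G|$, a standard consequence of the restriction--corestriction formula; since $\Oo$ has mixed characteristic, writing $|G| = p^a m$ with $p$ the residue characteristic and $m$ a unit in $\Oo$, and using $p = \pi^e \cdot(\text{unit})$, shows that $H^1(G,R)$ is killed by a power of $\pi$. Consequently every associated prime of $H^1(G,R)$ contains $\pi$. Now $\operatorname{Ass}\big(H^0_{S_+}(H^1(G,R))\big) = \operatorname{Ass}\big(H^1(G,R)\big) \cap V(S_+)$, and the only primes containing $S_+$ are $S_+$ itself and the $*$-maximal ideal $\M$ (they correspond to the two primes of $S/S_+ \cong \Oo$). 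The prime $S_+$ is excluded because it does not contain the degree-zero element $\pi$, while $\M$ is excluded because $S$ being \CM \ gives $\depth_\M H^1(G,R) > 0$ by \ref{prelim}(4), i.e. $\M \notin \operatorname{Ass}(H^1(G,R))$. Hence $\operatorname{Ass}\big(H^0_{S_+}(H^1(G,R))\big) = \emptyset$ and the torsion module is zero, as desired.

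I expect the one delicate step to be precisely this passage from $\depth_\M H^1(G,R) > 0$ to $H^0_{S_+}(H^1(G,R)) = 0$: a priori $\grade(S_+,-) \le \grade(\M,-)$, so positive depth at $\M$ does not by itself force the $S_+$-torsion to vanish (indeed the implication fails for a general graded module concentrated in non-negative degrees, e.g. a copy of $\Oo$ sitting in degree zero). The special feature that rescues the argument is that $H^1(G,R)$ is $\pi$-power torsion, which forces $S_+ \notin \operatorname{Ass}$; combined with $\M \notin \operatorname{Ass}$ this makes the set of associated primes meeting $V(S_+)$ empty.
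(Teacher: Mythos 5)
Your proof is correct and follows essentially the same route as the paper: both reduce to showing $H^0_{S_+}(H^1(G,R)) = 0$ via the exact sequence of \ref{first-es}(1), using that $H^1(G,R)$ is $\pi$-power torsion (mixed characteristic) together with $\depth_\M H^1(G,R) > 0$ from \ref{prelim}(4). Your associated-primes phrasing is just a repackaging of the paper's observation that $\operatorname{Supp} H^1(G,R) \subseteq V(\pi S)$ forces $H^0_{S_+}(H^1(G,R)) = H^0_{\M}(H^1(G,R))$, which then vanishes by the depth condition.
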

\begin{proof}
  It suffices to prove $H^2_{S_+}(S)_j = 0$ for $j \geq -1$. Let $P = (\pi)S$ and let $\M$ be the graded maximal ideal of $S$.  Recall that  $p H^1(G,R) = 0$ (see \cite[6.5.8]{W}). So support of $H^1(G, R)$  is contained in $V(P)$. It follows that
  $H^0_{S_+}(H^1(G, R)) = H^0_{\M}(H^1(G, R))$ where $\M$ is the $*$-maximal ideal of $S$. The latter module is zero as $S$ is \CM, see \ref{prelim}(4). By \ref{first-es}(1) the result follows.
\end{proof}
\section{Proof of Theorem \ref{equi}}
 \s\label{e-hyp} In this section we prove Theorem \ref{equi}. Throughout  this section we assume that $A$ is the ring of integers in a finite extension of $F_q(t)$ where $F_q$ is a finite field. Say $q = p^s$ for a prime $p$.

 \begin{lemma}\label{ord1}
 (with hypotheses as in \ref{e-hyp}). Let $\Oo = A_\q$ for some  non-zero prime $\q$ in $A$. Let $(\pi)$ be the maximal ideal of $\Oo$. Let $G \subseteq GL_2(\Oo)$ be a $p$-group acting linearly on $R = \Oo[x,y]$.  Set $S = R^G$. Then there exists non-zero $v \in R_1$ such that
 \begin{enumerate}[\rm (1)]
   \item  $v \in S_1$.
   \item $v, \pi$ is an $S$-regular sequence.
   \item $H^2_{S_+}(S/vS) = 0$.
 \end{enumerate}
 \end{lemma}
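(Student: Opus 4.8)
The plan is to produce $v$ as a nonzero $G$-invariant element of $R_1$ that is moreover \emph{primitive} (i.e.\ $v \notin \pi R_1$); this is the step that genuinely uses equal characteristic, and once it is in hand, (2) follows by permuting a regular sequence and (3) by a Nakayama argument on the graded pieces of local cohomology. For (1), I would regard the $\Oo$-linear action of $G$ on the free module $R_1 = \Oo x \oplus \Oo y$ as a representation. Writing $K = \operatorname{Frac}(\Oo)$, note $\operatorname{char} K = p$; since $G$ is a finite $p$-group and $\operatorname{char} K = p$, the augmentation ideal of $K[G]$ is nilpotent, so every nonzero $K[G]$-module has nonzero fixed part. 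Applying this to $R_1 \otimes_\Oo K = K^2$ yields $(R_1 \otimes_\Oo K)^G \neq 0$, and as $(-)^G$ is a kernel and $\Oo \to K$ is flat, $(R_1)^G \otimes_\Oo K = (R_1 \otimes_\Oo K)^G$; hence $(R_1)^G \neq 0$. I would then observe that $(R_1)^G$ is saturated in $R_1$: if $\pi w \in (R_1)^G$ then $\pi(gw - w) = 0$ for all $g$, and $\pi$-torsion-freeness of $R_1$ forces $w \in (R_1)^G$. A nonzero saturated submodule of the free $\Oo$-module $R_1$ is a direct summand, so it contains an element $v$ that is part of an $\Oo$-basis of $R_1$; then $v \in S_1$ and $v \notin \pi R_1$, giving (1).

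For (2), I would use that $R = \Oo[x,y] \in \F_\Oo$ and $G \subseteq Aut^*(R)$, so that Proposition \ref{prelim}(1) applies: $S/\pi S$ is a two-dimensional domain. First I would check $\pi, v$ is a regular sequence: $\pi$ is $S$-regular because $S$ is a domain, and $v$ is regular on $S/\pi S$ because its image there is nonzero. Indeed $v \notin \pi R_1 = \pi R \cap R_1$ gives $v \notin \pi R$, while $S \cap \pi R = \pi S$ (if $s = \pi r$ with $r \in R$ then $r$ is $G$-invariant, hence $r \in S$), so the class of $v$ in $S/\pi S \hookrightarrow R/\pi R$ is nonzero in the domain $S/\pi S$. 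As $\pi$ and $v$ are homogeneous elements of the $*$-maximal ideal $\M$ of the $*$-local ring $S$, the regular sequence may be permuted, so $v, \pi$ is also a regular sequence.

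For (3), put $\overline{S} = S/vS$, so that $S_+ \overline{S} = \overline{S}_+$ and $H^2_{S_+}(\overline{S}) = H^2_{\overline{S}_+}(\overline{S})$. Using that $\pi$ is $\overline{S}$-regular (from (2)) and $\deg \pi = 0$, I would write the short exact sequence
\[
0 \longrightarrow \overline{S} \xrightarrow{\pi} \overline{S} \longrightarrow S/(v,\pi)S \longrightarrow 0 .
\]
Reducing the two-dimensional domain $S/\pi S$ modulo the nonzero element $v$ shows $\dim S/(v,\pi)S = 1$, so $H^2_{S_+}(S/(v,\pi)S) = 0$ by Grothendieck vanishing. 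The induced long exact local cohomology sequence then forces $\pi \colon H^2_{S_+}(\overline{S}) \to H^2_{S_+}(\overline{S})$ to be surjective. Since each graded piece $H^2_{S_+}(\overline{S})_n$ is a finitely generated $\Oo$-module (see \cite[15.1.5]{bs}) and the surjection is degreewise, $H^2_{S_+}(\overline{S})_n = \pi\, H^2_{S_+}(\overline{S})_n$; Nakayama's Lemma then gives $H^2_{S_+}(\overline{S})_n = 0$ for all $n$, i.e.\ $H^2_{S_+}(S/vS) = 0$.

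The main obstacle is the existence of a nonzero invariant linear form in the very first step: it rests on $\operatorname{char} \Oo = p$ together with $G$ being a $p$-group, so that $K[G]$ is local with nilpotent augmentation ideal. In mixed characteristic no such invariant form need exist, which is precisely why the equicharacteristic case admits this direct argument. The only further care needed is the descent from $K$ to $\Oo$ and the saturation step, both used to make $v$ primitive so that its reduction remains a nonzerodivisor in the domain $S/\pi S$.
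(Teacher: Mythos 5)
Your proposal is correct and takes essentially the same route as the paper's proof: a nonzero invariant linear form is found over the fraction field $K$ (the paper cites \cite[8.2.1]{S} for $T^G_1 \neq 0$, which you reprove via nilpotency of the augmentation ideal of $K[G]$), it is made primitive (the paper clears denominators, writing $u = v/\pi^m$ with $v \notin \pi R$, where you use saturation of $(R_1)^G$ and a direct-summand argument), part (2) is the identical argument via the domain $S/\pi S$ from \ref{prelim}(1) plus permutability in the $*$-local ring, and part (3) is the same short exact sequence and degreewise Nakayama argument, with your appeal to Grothendieck vanishing for $\dim S/(v,\pi)S \leq 1$ replacing the paper's explicit homogeneous system of parameters $v, w$ for $S/\pi S$. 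These differences are cosmetic and the argument is complete.
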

\begin{proof}
(1) and (2). Let $K = \Oo_\pi$ be the quotient field of $\Oo$. Consider the induced action of $G$ on $T = K[x,y]$. By \cite[8.2.1]{S}, there exists non-zero $u \in T_1$ such that $u \in T^G$.
Let $u = v/\pi^m$ where $v \in R_1$ such that $v \notin \pi R$. Then $u = \sigma(u)$ for all $\sigma \in G$ implies that $v = \sigma(v)$ for all $\sigma \in G$. So $v \in S_1$. Furthermore as $v \notin \pi R$ it follows that $v \notin \pi S$. As $S/\pi S$ is a domain (see \ref{prelim}) it follows that $\pi, v$ is an $S$-regular sequence. As $S$ is *-local it follows that $v, \pi$ is an $S$-regular sequence.

(3) Note that $S/\pi S$ is a graded domain of dimension two with $(S/\pi S)_0 = \Oo/\pi \Oo$ a field. As $v$ is non-zero in $S/\pi S$ it follows that we can find $w$ homogeneous such that $v, w$ is a homogeneous system of parameters of $S/\pi S$. It follows that $H^2_{S_+}(S/(\pi, v)S) = 0$. As $v, \pi$ is a $S$-regular sequence we obtain an exact sequence
\[
0 \rt S/vS \xrightarrow{\pi} S/vS \rt S/(v, \pi)S \rt 0.
\]
This induces an exact sequence in cohomology for all $n \in \Z$,
\[
H^2_{S_+}(S/vS)_n \xrightarrow{\pi} H^2_{S_+}(S/vS)_n \rt H^2_{S_+}(S/(v, \pi)S)_n = 0.
\]
Notice $H^2_{S_+}(S/vS)_n$ is a finitely generated $\Oo$-module (see \cite[15.1.5]{bs}). So by Nakayama Lemma it follows that $H^2_{S_+}(S/vS)_n = 0$ for all $n \in \Z$.
\end{proof}
Next we give
\begin{proof}[Proof of Theorem \ref{equi}] Let $K$ be a Sylow $p$ subgroup of $G$. By Theorem \ref{sylow} it suffices to prove $R^{K} $ is  \CM \ (as for all other primes $l$ dividing $|G|$ note that $l$ is invertible in $\Oo$). So we assume $G$ is a $p$-group. We localize $A$ at a non-zero prime $\p$. Let $(\Oo, (\pi) )= (A_\p, \p A_\p)$. Set $R = \Oo[x,y]$ and $S = R^G$. We have an exact sequence, see \ref{first-es}
\begin{equation*}
 0 \rt H^0_{S_+} (H^1(G, R)) \rt H^2_{S_+}(S) \rt L \rt 0.  \tag{$\dagger$}
\end{equation*}
where $L_{j} = 0$ for $j \geq -1$.
Also note that $H^1(G, R)_j = 0$ for $j \leq -1$, see \ref{non-neg}.
It follows that $H^2_{S_+}(S)_{-1} = 0$.
Let $v \in S_1$ be as in Lemma \ref{ord1}. We have an exact sequence
$$ 0 \rt S(-1) \xrightarrow{v} S \rt S/vS \rt 0. $$
By \ref{ord1} we have $H^2_{S_+}(S/vS) = 0$. By the long exact sequence in cohomology we get for all $n \in \Z$
\begin{equation*}
  H^2_{S_+}(S)_{n -1} \rt  H^2_{S_+}(S)_n \rt 0. \tag{*}
\end{equation*}
Put $n = 0$ in (*). As $H^2_{S_+}(S)_{-1} = 0$ we get $H^2_{S_+}(S)_{0} = 0$.
Put $n = 1$ in (*). As $H^2_{S_+}(S)_{0} = 0$ we get $H^2_{S_+}(S)_{1} = 0$.
Iterating we get  $H^2_{S_+}(S)_{n} = 0$  for all $n \geq -1$.

By ($\dagger$) it follows that $H^0_{S_+}(H^1(G,R)))_n = 0$ for all $n \geq 0$. As $H^1(G, R)_j = 0$ for all $j < 0$, see \ref{non-neg},  we get $H^0_{S_+}(H^1(G, R))) = 0$. So $\depth H^1(G, R) > 0$. By \ref{prelim} it follows that $S$ is \CM.
\end{proof}

\section{Strategy to prove Theorem \ref{mixed}}
In this section we discuss our strategy to prove Theorem \ref{mixed}. We also indicate the motivation for the next two sections.

\s Let $K$ be a number field and let $A$ be the ring of integers of $K$. Let $G$ be a finite subgroup of $GL_2(A)$ acting linearly on $R = A[X,Y]$ (fixing $A$). We assume that for every prime $p$ dividing $|G|$, the Sylow $p$-subgroups have exponent $p$. We  want to prove that $R^G$ is \CM.
We do the following steps to prove this result:

\emph{Step-1} For $p$ dividing $|G|$ let $H_p$ be a Sylow $p$-subgroup of $G$. By Theorem \ref{sylow} it sufices to prove $R^{H_p}$ is \CM \ for every prime $p$ dividing $|G|$. Thus it suffices to assume $G$ is a $p$-group.

\emph{Step-2} Let $G$ be a $p$-group of exponent $p$. By Theorem \ref{red-hilb} it suffices to assume $A$ is the ring of integers of the Hilbert class field of $\mathbb{Q}(e^{2\pi i/p})$.

\emph{Step-3} As $R^G$ is a $\mathbb{N}$-graded ring, it suffices to prove that if $\M$ is a maximal homogeneous ideal of $R^G$ then $(R^G)_\M = (R_\M)^G$ is \CM.  We note that $\M \cap A = P$ a maxiaml ideal of $A$. Thus it suffices to prove that $R^G_P = (R_P)^G$ is \CM.  Let $P \cap \Z = (q)$ where $q$ is a prime. If $q \neq p$ then $|G|$ is invertible in $A_P$. So
$A_P[X, Y]^G$ is \CM. This leaves the essential case when $q = p$.
Thus it suffices to prove the following result.

\begin{theorem}
 \label{sufficient} Let $A$ be the ring of integers of the Hilbert class field of $\mathbb{Q}(e^{2\pi i/p})$. Let $P$ be a prime ideal of $A$ containing $p$. Let $(\Oo, (\pi)) = (A_P, PA_P)$. Let $G \subseteq GL_2(\Oo)$ be a $p$-group acting linearly on $R = \Oo[X,Y]$. Then $R^G$ is \CM.
\end{theorem}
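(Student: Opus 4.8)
The plan is to verify the cohomological criterion of Proposition~\ref{prelim}(4): it suffices to prove $\depth_\M H^1(G,R)>0$, equivalently $\operatorname{Ass}_S H^1(G,R)\not\ni\M$. Since $pH^1(G,R)=0$, the module $H^1(G,R)$ is $\pi$-power torsion (so that $H^0_{S_+}=H^0_\M$ on it), and by Proposition~\ref{first-es}(1) this is in turn equivalent to $H^2_{S_+}(S)_j=0$ for all $j\geq 0$. First I would reduce the group. The action on $R_1=\Oo X\oplus\Oo Y$ is a faithful $2$-dimensional representation of $G$ over $K=\operatorname{Frac}(\Oo)$, a field of characteristic $0$. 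A $2$-dimensional representation of a $p$-group is either irreducible---possible only when $p=2$, which is excluded because an exponent-$2$ group is abelian and hence has no faithful irreducible $2$-dimensional representation---or a direct sum of two characters. So the representation is $\chi_1\oplus\chi_2$ with $\chi_i\colon G\to\mu_p\subseteq\Oo^\times$, and $G$ is abelian: $G\cong\Z/p$ or $G\cong(\Z/p)^2$. These are the two cases treated in the following sections.

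Because $G$ preserves degrees, Proposition~\ref{comp} reduces the computation to $H^i(G,R)=\bigoplus_n H^i(G,R_n)$ with $R_n=\operatorname{Sym}^n(\Oo^2)$. When $G$ is diagonalizable over $\Oo$ itself---the scalar case $\chi_1=\chi_2$, or any case with $\sigma\equiv I\pmod\pi$---each $R_n$ splits as $\bigoplus_{i+j=n}\Oo_{\chi_1^i\chi_2^j}$, the ring $S=R^G$ is the normal affine semigroup ring $\Oo[\Lambda]$ on $\Lambda=\{(i,j)\in\mathbb{N}^2:\chi_1^i\chi_2^j=1\}$, and the $2$-periodic resolution of~\ref{cyclic} gives $H^1(\Z/p,\Oo_\chi)=\Oo/(\zeta^c-1)=\Oo/\pi$ for $\chi\neq 1$ and $0$ for $\chi=1$. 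Thus $H^1(G,R)=\bigoplus_{\chi\neq1}M_\chi$, where $M_\chi\subseteq R/\pi R$ is the $k$-span of the monomials of character $\chi$. Each $M_\chi$ is a torsion-free module over the two-dimensional normal domain $S/\pi S=k[\Lambda]$, so $\operatorname{Ass}(M_\chi)=\{(0)\}$ and $\depth_{\M}M_\chi\geq 1$; hence $\depth_\M H^1(G,R)\geq1$ and $S$ is \CM. The case $G=(\Z/p)^2$ is handled in the same way after a similar computation of $H^1$.

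The hard part, and the reason the two separate sections are needed, is the genuinely mixed case: $\sigma$ has distinct eigenvalues $\zeta^a\neq\zeta^b$ in $\Oo$, yet its reduction $\bar\sigma\in GL_2(\Oo/\pi)$ is a \emph{nontrivial} unipotent. This is forced by the arithmetic of $\Oo$, since $\zeta\equiv 1\pmod\pi$ makes all eigenvalues congruent modulo $\pi$; consequently $G$ is not conjugate over $\Oo$ to a diagonal subgroup, $X,Y$ are not integral eigen-coordinates, $S=R^G$ is no longer a monomial ring, and $H^1(G,R)$ need not be annihilated by $\pi$ (only by $p=\pi^{p-1}\cdot(\text{unit})$), so the clean torsion-free argument above breaks down. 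I expect this to be the main obstacle. The plan is to place $\sigma$ in integral upper-triangular form $\sigma(X)=\zeta^aX$, $\sigma(Y)=X+\zeta^bY$, decompose the lattice $R_n=\operatorname{Sym}^n(\Oo^2)$ into indecomposable $\Oo[\Z/p]$-summands, compute $H^1$ of each, and then show that the resulting graded module $H^1(G,R)$ still has no $\M$-torsion---i.e. has depth $\geq1$ over the special fibre---by comparing $S/\pi S$ with $(R/\pi R)^G=\overline{R}^{\,\overline{G}}$, the characteristic-$p$ invariant ring of the unipotent group $\overline{G}$ (a two-dimensional normal domain, hence \CM), through the exact sequence of~\ref{les} and control of $C=\ker(\pi\mid H^1(G,R))$. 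The choice of $\Oo$ as the localized ring of integers of the Hilbert class field of $\mathbb{Q}(e^{2\pi i/p})$ enters precisely here: it guarantees that the representation is realizable over $\Oo$ and that each $\zeta^a-\zeta^b$ has $\pi$-valuation $1$, which keeps the lattice and cohomology computations uniform.
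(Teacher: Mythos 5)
There is a genuine gap, in two places. First, your group-theoretic reduction imports a hypothesis the statement does not contain: Theorem \ref{sufficient} assumes only that $G\subseteq GL_2(\Oo)$ is a $p$-group, with no exponent restriction, and your exclusion of the irreducible case rests on ``an exponent-$2$ group is abelian.'' For $p=2$ the Hilbert class field of $\mathbb{Q}(e^{2\pi i/2})=\mathbb{Q}$ is $\mathbb{Q}$ itself, so $\Oo=\Z_{(2)}$, and $GL_2(\Z_{(2)})$ contains $\Z/4\Z$ (a rotation matrix, eigenvalues $\pm i\notin\mathbb{Q}$) and the dihedral group of order $8$, both acting irreducibly; your dichotomy ``$G\cong\Z/p$ or $(\Z/p)^2$, a sum of two characters'' fails for them. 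The paper needs no such classification: it proves the stronger Theorem \ref{mix-local} for an \emph{arbitrary} $p$-group by induction on $|G|$, passing to $R^H$ for a suitable normal subgroup $H$ (the kernel of $G\rt Aut^*(R/\pi R)$, or a central subgroup of order $p$) and letting $G/H$ act on $R^H$ — which is exactly why the class $\F_\Oo$ and Proposition \ref{incl} exist: the intermediate rings $R^H$ are not polynomial. Your remark that $(\Z/p)^2$ ``is handled in the same way'' conceals precisely the configurations in which individual order-$p$ elements are of your hard type; the paper's induction absorbs these automatically.

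Second, and more seriously, the hard case itself — $\sigma$ with distinct eigenvalues in $\Oo$ whose reduction is a nontrivial unipotent — is left as an unexecuted plan, and the plan is not the paper's mechanism nor a workable substitute. The paper (Theorem \ref{case-1}) never decomposes $R_n=\operatorname{Sym}^n(\Oo^2)$ into $\Oo[\Z/p]$-lattices. It localizes $H^1(G,R/\pi R)$ at the generic point and applies the \emph{additive Hilbert Theorem 90} to the Galois extension $\operatorname{Frac}(R/\pi R)$ of its fixed field, getting $\dim H^1(G,R/\pi R)\le 1$; the sequence of \ref{les} plus Nakayama then gives $\dim H^2(G,R)\le 1$, hence $H^2_{S_+}(H^2(G,R))=0$; the exact sequence \ref{first-es}(2) together with \ref{non-neg} forces $H^0_{S_+}(H^3(G,R))=0$; and the $2$-periodicity $H^3(G,R)\cong H^1(G,R)$ for cyclic groups yields $\depth_\M H^1(G,R)>0$, i.e.\ the criterion \ref{prelim}(4). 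Nothing in your proposal replaces this chain. Your alternative — decomposing each $R_n$ into indecomposable $\Oo[\Z/p]$-lattices — faces two obstacles: over this $\Oo$, which is ramified over $\Z_p$ of index $p-1$, the category of $\Oo[\Z/p]$-lattices is in general far from finite representation type, so no uniform list of summands and $H^1$'s is available; and even a complete degreewise computation of $H^1(G,R_n)$ would not settle the question, because $\depth_\M H^1(G,R)$ depends on how $S_+$ maps between graded pieces — information a lattice decomposition of each $R_n$ separately discards. Your closing appeal to the sequence of \ref{les} and ``control of $C$'' is essentially a restatement of the criterion \ref{prelim}(4), not an argument for it. Your diagonal-over-$\Oo$ case is correct and pleasant (close in spirit to the paper's Case 2, Theorem \ref{c2-thm}, via $\pi H^1(G,R)=0$ and embedding into a domain), though note that the assertion ``$\sigma\equiv I \bmod \pi$ implies diagonalizable over $\Oo$'' is itself the nontrivial Lemma \ref{basis}, where the Hilbert-class-field property $\xi-1=\pi u_\xi$ does real work; but as it stands the proposal proves only the split case, and the central difficulty of the theorem is unresolved.
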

We prove the above result by induction on $|G|$. Although we are only interested in the polynomial ring case,  the intermediate rings that arise in induction are not polynomial rings. So we prove the assertion of Theorem \ref{sufficient} for a more general class of rings $\F_\Oo$ defined in \ref{class} with group actions as defined in \ref{action}.

We first consider the case when $G \subseteq Aut^*(R)$ is cyclic of order $p$. We have to consider two cases

Case (1) The natural map $G \rt Aut^*(R/\pi R)$ is injective.  We cover this case in section \ref{case-1-sect}.

Case (2) The natural map $G \rt Aut^*(R/\pi R)$ is trivial.  We cover this case in section \ref{case-2-sect}.

We will need the following fact about Hilbert class field of  $\mathbb{Q}(e^{2\pi i/p})$.
\begin{lemma}
 \label{units} Let $A$ be the ring of integers of $K$, the Hilbert class field of $\mathbb{Q}(e^{2\pi i/p})$. Let $P$ be a prime ideal of $A$ containing $p$. Let $(\Oo, (\pi)) = (A_P, PA_P)$.  If $\xi$ is any primitive $p^{th}$-root of unity then
 $\xi - 1 = \pi u_\xi$ where $u_\xi$ is a unit in $\Oo$.
\end{lemma}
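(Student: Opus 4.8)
The plan is to reduce the statement to the single element $\zeta_p := e^{2\pi i/p}$ and then invoke the classical ramification theory of the $p$-th cyclotomic field together with the fact (already exploited in \ref{hilb}) that the Hilbert class field is an \emph{unramified} extension. First I would observe that any primitive $p$-th root of unity has the form $\xi = \zeta_p^a$ with $1 \leq a \leq p-1$, and
\[
\frac{\xi - 1}{\zeta_p - 1} = 1 + \zeta_p + \cdots + \zeta_p^{a-1}.
\]
This is a classical cyclotomic unit: choosing $b$ with $ab \equiv 1 \pmod p$ shows that its inverse $(\zeta_p - 1)/(\zeta_p^a - 1) = 1 + \zeta_p^a + \cdots + \zeta_p^{a(b-1)}$ is again an algebraic integer lying in $\Z[\zeta_p] \subseteq \Oo$, so the ratio is a unit of $\Oo$. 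Hence it suffices to prove $\zeta_p - 1 = \pi u$ for some unit $u \in \Oo$, and then set $u_\xi = (\xi-1)/(\zeta_p-1) \cdot u$.

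Next I would recall the arithmetic of the base. Let $B = \Z[\zeta_p]$ be the ring of integers of $\mathbb{Q}(\zeta_p)$. The prime $p$ is totally ramified in $B$, with $(p) = (\zeta_p - 1)^{p-1}$, and $\p_0 := (\zeta_p - 1)$ is the unique prime of $B$ lying over $p$; in particular $v_{\p_0}(\zeta_p - 1) = 1$, i.e. $\zeta_p - 1$ is a uniformizer of $B_{\p_0}$.

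Now I would pass to $K$ and conclude. The prime $P$ of $A$ lies over a prime of $B$ dividing $p$, which must be $\p_0$. Since $K$ is the Hilbert class field of $\mathbb{Q}(\zeta_p)$, the extension $K/\mathbb{Q}(\zeta_p)$ is unramified, so $\p_0$ is unramified in $A$ and the ramification index $e(P/\p_0) = 1$. Therefore $v_P$ restricts on $B$ to $v_{\p_0}$, and
\[
v_P(\zeta_p - 1) = v_{\p_0}(\zeta_p - 1) = 1 = v_P(\pi).
\]
Since $\zeta_p - 1$ and $\pi$ both generate the maximal ideal of the DVR $\Oo = A_P$, their ratio $u = (\zeta_p - 1)/\pi$ is a unit of $\Oo$, giving $\zeta_p - 1 = \pi u$; combined with the first paragraph this yields $\xi - 1 = \pi u_\xi$ with $u_\xi$ a unit, as required.

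The only genuinely substantive point is the vanishing of ramification, i.e. that $e(P/\p_0) = 1$. This is precisely the defining feature of the Hilbert class field, and is exactly why that field (rather than an arbitrary splitting field of $G$) was singled out in \ref{hilb}; everything else is the standard arithmetic of $\Z[\zeta_p]$.
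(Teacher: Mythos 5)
Your proposal is correct and follows essentially the same route as the paper: the paper likewise reduces to the single uniformizer fact via the cyclotomic units $(\eta^i-1)=(\eta-1)u_i$, uses $pB=(\eta-1)^{p-1}B$ in $\Z[\eta]$ (citing \cite[13.2.7]{IR} where you prove the unit claim directly), and then invokes unramifiedness of the Hilbert class field over $\mathbb{Q}(e^{2\pi i/p})$ to conclude $QA_P = PA_P$, which is your statement $e(P/\p_0)=1$ in ideal-theoretic rather than valuation-theoretic language. No gaps; the argument is complete as written.
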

\begin{proof}
 Let $B$ be the ring of integers of  $\mathbb{Q}(e^{2\pi i/p})$. Fix a primitive $p$-th root of unity $\eta$ in $B$. Set
 $Q = (\eta -1)B$.
 Note
 $(\eta^i -1) = (\eta -1)u_i$ where $u_i$ is a unit in $B$. So we have for $i = 1, \ldots, p-1$ we have $Q = (\eta^i - 1)B$. We also have $pB = Q^{p-1}$ and $Q$ is a prime ideal in $B$. For these facts see \cite[13.2.7]{IR}.

 The extension $K$ is an un-ramified extension of  $\mathbb{Q}(e^{2\pi i/p})$. So we have
 $QA = P_1P_2 \cdots P_g$ where $P_i$ are distinct primes. If $P$ is a prime in $A$ containing $p$ then $P \in \{ P_1, \ldots, P_g\}$ and $QA_P = PA_P$. The result follows.
\end{proof}

\section{Case-1}\label{case-1-sect}
In this section we assume the following
\s \label{setup-case1} $(\Oo,(\pi))$ is a DVR such that $\Oo_\pi$ has characteristic zero and $\Oo/(\pi)$ has characteristic $p >0$.
Let $R \in \F_\Oo$, see \ref{class}.  Let $G \subset Aut^*(R)$ be a group of order $p$ acting  on $R$ (see \ref{action}). Let $S = R^G$ be the ring of invariants.
 We also assume that the natural map $G \rt Aut^*(R/\pi R)$ is injective.
Let $S = R^G$ be the ring of invariants of $G$. In this section we prove
\begin{theorem}\label{case-1}
(with hypotheses as in \ref{setup-case1})
$S \in  \F_\Oo$.
\end{theorem}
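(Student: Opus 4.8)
The plan is to verify the four defining properties of $\F_\Oo$ from \ref{class} for $S = R^G$, namely that $S$ is a graded \CM\ ring with $S_0 = \Oo$, that $\dim S = 3$, that $S$ and $S/\pi S$ are domains, and that $a(S) \leq -2$. Several of these come cheaply from the results already established. That $S$ is three-dimensional with $S_0 = \Oo$, and that $S, S/\pi S$ are both domains, is contained in \ref{prelim}(1) and the discussion in \ref{les}. The two genuine things to establish are therefore that $S$ is \CM, and that $a(S) \leq -2$, i.e.\ $H^2_{S_+}(S)_j = 0$ for $j \geq -1$. For the latter, \ref{incl} already tells me that \emph{once} $S$ is known to be \CM\ (in mixed characteristic) it follows that $S \in \F_\Oo$. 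So the whole theorem reduces to proving a single assertion: $S$ is \CM.

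By \ref{prelim}(4), $S$ is \CM\ if and only if $\depth_\M H^1(G, R) > 0$, equivalently $H^0_{S_+}(H^1(G,R)) = 0$. This is where I would spend the effort, and where the hypothesis of Case-1 (the map $G \rt Aut^*(R/\pi R)$ is injective) must enter. Since $|G| = p$ and $G$ acts faithfully on $R/\pi R$, the reduction mod $\pi$ controls the cohomology: my plan is to compute or bound $H^1(G,R)$ using the cyclic resolution from \ref{cyclic}(3), so that $H^1(G,R)$ is the cohomology of $R \xrightarrow{\si-1} R \xrightarrow{\Tr} R$ at the middle spot, namely $\ker(\Tr)/\image(\si - 1)$. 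Recall from \ref{incl} that $p H^1(G,R) = 0$, so $H^1(G,R)$ is an $R/\pi R$-module (as $p$ and $\pi$ generate the same radical in $\Oo$), and one can hope to analyze it via the induced faithful action of $G$ on the two-dimensional \CM\ domain $R/\pi R$. The key point I would aim for is that faithfulness of the mod-$\pi$ action forces $H^1(G,R)$ to have positive depth at $\M$ — intuitively, because a faithful order-$p$ action on a normal two-dimensional ring has no "extra" fixed-point contribution concentrated at the maximal ideal.

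The main obstacle, I expect, is precisely this depth computation for $H^1(G,R)$. The clean statement $H^0_{S_+}(H^1(G,R)) = 0$ says the relative cohomology has no $S_+$-torsion, and proving it will require understanding how the trace map $\Tr = 1 + \si + \cdots + \si^{p-1}$ and $\si - 1$ interact with the grading and with reduction mod $\pi$. A natural route is to pass to $R/\pi R$, use Lemma \ref{units} to write the relevant eigenvalue differences $\xi - 1$ as $\pi$ times a unit (so that the linearized action mod $\pi$ is well-controlled), and then either compute $H^1$ directly or exhibit a homogeneous element of positive degree in $S_+$ that is a nonzerodivisor on $H^1(G,R)$. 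I would first try to show that $H^1(G,R)$ is itself (up to finite length) a Cohen–Macaulay module of dimension $\geq 1$ over $S/\pi S$, from which $H^0_{S_+}(H^1(G,R)) = 0$ follows. Once that depth inequality is in hand, \ref{prelim}(4) gives that $S$ is \CM, and then \ref{incl} upgrades this to $S \in \F_\Oo$, completing the proof.
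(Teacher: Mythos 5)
Your reduction is exactly the paper's: by \ref{incl} it suffices to prove $S$ is \CM, and by \ref{prelim}(4) this comes down to showing $H^1(G,R)$ has positive depth. But from that point on your proposal is a plan, not a proof, and the routes you sketch do not work. First, from $pH^1(G,R)=0$ you cannot conclude that $H^1(G,R)$ is an $R/\pi R$-module: $p$ is only $\pi^e$ times a unit in $\Oo$ with possibly $e>1$, so annihilation by $p$ gives support in $V(\pi S)$ but not $\pi H^1(G,R)=0$. Second, Lemma \ref{units} (eigenvalue differences $\xi-1=\pi u_\xi$) is the engine of Case-2 of section \ref{case-2-sect}, where the mod-$\pi$ action is \emph{trivial}; it plays no role when that action is faithful, so invoking it here is a sign the proposal has not located where the Case-1 hypothesis actually enters. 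Third, your hoped-for statement that $H^1(G,R)$ is (up to finite length) \CM\ of dimension $\geq 1$ is unsupported and is not what the paper proves: the paper only ever bounds dimensions, and a one-dimensional module can perfectly well have depth zero, so ``faithfulness forces positive depth'' needs a genuine mechanism, which is missing.

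The paper's mechanism is an indirection you do not anticipate: it never analyzes $H^1(G,R)$ directly. Faithfulness of $G$ on $\ov{R}=R/\pi R$ makes the fraction field $K$ of $\ov{R}$ a Galois extension of the fraction field of $\ov{R}^G$ with group $G$, and the \emph{additive} Hilbert Theorem 90 gives $H^1(G,K)=0$; by the localization result \ref{localization} this yields $\dim H^1(G,\ov{R})\leq 1$. Feeding this into the long exact sequence of \ref{les}, together with a finite-length/Nakayama argument at the prime $P=(\pi)S$, gives $\dim H^2(G,R)\leq 1$, hence $H^2_{S_+}(H^2(G,R))=0$ for dimension reasons. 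The Ellingsrud--Skjelbred exact sequence \ref{first-es}(2), namely $0 \rt H^0_{S_+}(H^3(G,R)) \rt H^2_{S_+}(H^2(G,R))$, then forces $H^0_{S_+}(H^3(G,R))=0$, and the $2$-periodicity of cohomology of the cyclic group $G$ (from the resolution in \ref{cyclic}) gives $H^3(G,R)\cong H^1(G,R)$, which is exactly the depth statement needed. So the missing ideas are concretely: (a) Hilbert 90 on the generic fiber as the place where faithfulness is used, (b) the dimension bound on $H^2(G,R)$, and (c) exploiting periodicity via \ref{first-es}(2) to reach $H^1$ through $H^3$ --- your proposal names the periodic resolution but uses it only to describe $H^1$, not to make this substitution, and without (a)--(c) the central claim $H^0_{S_+}(H^1(G,R))=0$ remains unproved.
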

\begin{proof}
It suffices to prove $S$ is \CM, see \ref{incl}.
 Let $\ov{R} = R/\pi R$ and $\ov{S} = S/\pi S$. Also let $K$ be the field of fractions of $\ov{R}$.
As $G \subseteq Aut^*(R/\pi R) $ we note that computing group cohomology of $\ov{R}$ as a $S$-module or over $\ov{S}$ yields the same answer.  Let $\sigma$ be a generator of $G$.

Claim-1: $\dim H^1(G,\ov{R}) \leq 1$.\\
Let $W = \ov{R}^{G} \setminus \{ 0 \}$. Then notice $W^{-1}\ov{R} = K$. So $W^{-1}H^1(G, \ov{R}) = H^1(G, K)$, see \ref{localization}. We note that $K$ is a Galois extension of the field $W^{-1}\ov{R}^G$ with Galois group $G$, (see \cite[ Chapter VI, 1.8]{L}). By the additive form of Hilbert Theorem 90, see \cite[Chapter VI, 6.3]{L},  $H^1(G, K)  = 0$.
The result follows.

Claim-2: $\dim H^2(G, R) \leq 1$. \\
Let $P = (\pi)S$. Recall that  $p H^2(G,R) = 0$ (see \cite[6.5.8]{W}). So support of $H^2(G, R)$  is contained in $V(P)$. We assert that $H^2(G, R)_P = 0$.

 By \ref{les} we have an exact sequence $0 \rt E \rt H^2(G, R) \xrightarrow{\pi} H^2(G, R) $ where $E$ is a a quotient of $H^1(G,\ov{R})$ and so has dimension $\leq 1$. In particular $E_P = 0$.
 So we have $ 0 \rt H^2(G, R)_P \xrightarrow{\pi} H^2(G, R)_P$. As $H^2(G, R)_P$ has finite length as an $S_P$ module we get $H^2( G, R)_P = \pi H^2(G, R)_P$. Since $H^2(G, R)_P$ is finitely generated $S_P$-module and $\pi \in P$ we get $H^2(G, R)_P = 0$ by Nakayama's Lemma. So Claim-2 follows.

 By \ref{first-es} we have an exact sequence $$0 \rt  H^0_{S_+}(H^3(G,R)) \rt H^2_{S_+}(H^2(G, R)) = 0.$$
  So $H^0_{S_+}(H^3(G, R)) = 0$. As $G$ is cyclic $H^3(G, R) \cong H^1(G,R)$. It follows that $\depth H^1(G, R) > 0$. So $S$ is \CM \ by \ref{prelim}.
 \end{proof}

\section{Case 2}\label{case-2-sect}
We first discuss the hypotheses on the DVR $\Oo$.
\s \label{setup-case2-O} $(\Oo,(\pi))$ is a DVR such that $\Oo_\pi$ has characteristic zero and $\Oo/(\pi)$ has characteristic $p >0$. We assume that $\Oo$ contains $p^{th}$ roots of unity.  We assume that if $\xi$ is any  primitive $p^{th}$-root of unity then $\xi-1 = \pi u_\xi$ where $u_\xi$ is a unit in $\Oo$.

\begin{remark}
 Let $\Oo$  be a  localization at a prime containing $p$  of the ring of  integers of the Hilbert class field of $\mathbb{Q}(e^{2\pi i/p})$. Then $\Oo$ satisfies the hypotheses of \ref{setup-case2-O}, see \ref{units}.
\end{remark}

\s \label{case-2}  Let $\Oo$ satisfy assumption as in \ref{setup-case2-O}.
Let $R \in \F_\Oo$, see \ref{class}.  Let $G \subset Aut^*(R)$ be a  group of order $p$ acting  on $R$ (see \ref{action}). Let $S = R^G$ be the ring of invariants.
 We also assume that the natural map $G \rt Aut^*(R/\pi R)$ is trivial.
Let $S = R^G$ be the ring of invariants of $G$. \

In this section we prove
\begin{theorem}\label{c2-thm} (with hypotheses as in \ref{case-2}). $S \in \F_\Oo$.
\end{theorem}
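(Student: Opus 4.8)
The plan is to prove that $S$ is \CM, after which Proposition \ref{incl} gives $S \in \F_\Oo$. By Proposition \ref{prelim}(4) it is enough to show $\depth_\M H^1(G,R) > 0$, i.e.\ that $H^0_\M(H^1(G,R)) = 0$, where $\M$ is the $*$-maximal ideal of $S$. So the whole theorem reduces to a single depth computation for the module $M := H^1(G,R)$.

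First I would exploit the triviality of the action on $\ov{R} = R/\pi R$. Since $p\ov{R} = 0$ and $G$ acts trivially, Proposition \ref{trivial} gives $H^i(G,\ov{R}) = \ov{R}$ for all $i \geq 0$. Reading the long exact sequence of \ref{les} at the relevant spot, namely $S \xrightarrow{\pi} S \rt \ov{R} \rt M \xrightarrow{\pi} M$, the cokernel of $\pi$ on $S$ is $\ov{S} = S/\pi S$, included in $\ov{R}$ in the natural way; exactness then identifies the $\pi$-torsion submodule $\ker(\pi \colon M \rt M)$ with $\ov{R}/\ov{S}$.

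Next I would pass from $M$ to this $\pi$-torsion. By \ref{setup-case2-O} we have $p = \prod_{i=1}^{p-1}(1-\xi^i) = \pi^{p-1}\cdot(\text{unit})$, so $M$, being annihilated by $p$, is annihilated by $\pi^{p-1}$. Hence any nonzero element of $H^0_\M(M)$, multiplied by the largest power of $\pi$ not killing it, yields a nonzero element of $\ker(\pi|_M) \cap H^0_\M(M)$; therefore $H^0_\M(M) = 0$ if and only if $H^0_\M(\ker(\pi|_M)) = 0$. Combined with the previous step, the theorem comes down to proving
\[
\depth_\M(\ov{R}/\ov{S}) \geq 1 .
\]

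Finally, to prove this I would introduce the additive operator $\delta = (\sigma-1)/\pi \colon R \rt R$, well defined since $R$ is a domain and $\sigma \equiv 1 \pmod{\pi}$, and let $\ov{\delta}$ be its reduction modulo $\pi$. One checks that $\ov{\delta}$ is a derivation of $\ov{R}$, and expanding $(1+\pi\delta)^p = \sigma^p = 1$ together with $p = \pi^{p-1}\cdot(\text{unit})$ produces a relation $\ov{\delta}^{\,p} = c\,\ov{\delta}$ with $c$ a nonzero constant; thus $\ov{\delta}$ is $p$-closed and $\ov{R}$ is purely inseparable of degree $p$ over $\ker \ov{\delta}$. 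The decisive structural fact is that the ring of constants is saturated in $\ov{R}$: if $x \in \ov{R}$ lies in $\operatorname{Frac}(\ker\ov{\delta})$ then the quotient rule gives $\ov{\delta}(x) = 0$, so $\ker\ov{\delta} = \ov{R}\cap \operatorname{Frac}(\ker\ov{\delta})$ and $\ov{R}/\ker\ov{\delta}$ is torsion-free, of positive depth. Since $\ov{S} \subseteq \ker\ov{\delta}$, the hard part will be to show that $\ov{S}$ actually exhausts $\ker\ov{\delta}$ — equivalently, that an element of $R$ which is $G$-invariant modulo $\pi^2$ is congruent modulo $\pi$ to an honest invariant, so that $\ov{S} = \ker\ov{\delta}$ and $\ov{R}/\ov{S}$ is torsion-free. (Even short of full equality it would suffice that $\ker\ov{\delta}/\ov{S}$ carry no $\M$-embedded associated prime, by the depth lemma applied to $0 \rt \ker\ov{\delta}/\ov{S} \rt \ov{R}/\ov{S} \rt \ov{R}/\ker\ov{\delta} \rt 0$.) This comparison is exactly where the normality of $S = R^G$ and the precise form of the root-of-unity hypothesis \ref{setup-case2-O} must enter; granting it, the depth estimate, and hence $S \in \F_\Oo$, follow.
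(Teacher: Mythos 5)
Your scaffolding is sound, and closer to the paper's than it may appear. Reducing via \ref{incl} and \ref{prelim}(4) to $\depth_\M H^1(G,R) > 0$ is exactly the paper's first move; your identification $\ker\bigl(\pi\colon H^1(G,R) \rt H^1(G,R)\bigr) \cong \ov{R}/\ov{S}$ from \ref{les} and \ref{trivial} is correct; and since $p = \prod_{i=1}^{p-1}(1-\xi^i) = \pi^{p-1}\cdot(\text{unit})$ by \ref{setup-case2-O} while $p\,H^1(G,R) = 0$, your torsion argument reducing everything to $\depth_\M(\ov{R}/\ov{S}) \geq 1$ is valid. (The paper in fact proves the stronger statement $\pi H^1(G,R) = 0$, so that $H^1(G,R) \cong \ov{R}/\ov{S}$ embeds into the domain $\ov{R} = H^1(G,\ov{R})$, which gives the positive depth at once.) Your computations with $\delta = (\sigma-1)/\pi$ are also fine: $\ov{\delta}$ is a derivation, the expansion of $\sigma^p = (1+\pi\delta)^p = 1$ does give $\ov{\delta}^{\,p} = c\,\ov{\delta}$ with $c$ a nonzero constant, and the saturation of $\ker\ov{\delta}$ makes $\ov{R}/\ker\ov{\delta}$ torsion-free.

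The genuine gap is the step you yourself flag and then grant: that $\ov{S} = \ker\ov{\delta}$ (equivalently, that any $r \in R$ with $(\sigma-1)r \in \pi^2 R$ is congruent modulo $\pi$ to an honest invariant), or at least that $\ker\ov{\delta}/\ov{S}$ has positive depth. This is not a finishing detail to be supplied by "normality of $S$"; it is the entire analytic content of the theorem, and nothing in your argument produces it. Indeed your framework by itself does not even rule out $\ov{\delta} = 0$ (i.e.\ $\sigma \equiv 1 \bmod \pi^2$ on $R$), in which case you would need $\ov{S} = \ov{R}$; excluding this for $\sigma \neq 1$ already requires knowing how $\sigma$ acts over $\Oo$. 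The paper fills precisely this hole with Lemma \ref{basis}: each graded piece $R_n$ is a finite free $\Oo$-module with trivial action modulo $\pi$, and admits an $\Oo$-basis $\{w_i\}$ with $\sigma(w_i) = \xi_i w_i$ and $\xi_i^p = 1$ --- a nontrivial simultaneous diagonalization over the DVR, proved by induction with explicit basis corrections using $\xi - 1 = \pi u_\xi$. Combined with \ref{comp}, the cocycle computation of Proposition \ref{g1} then yields $\pi H^1(G,R) = 0$. With Lemma \ref{basis} in hand your missing claim is a one-liner: write $r = \sum_i a_i w_i$; then $(\sigma-1)r \in \pi^2 R$ forces $a_i(\xi_i - 1) \in \pi^2\Oo$, hence $\pi \mid a_i$ whenever $\xi_i \neq 1$, so $r \equiv \sum_{\xi_i = 1} a_i w_i \in S$ modulo $\pi$. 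But some such eigenbasis lemma must actually be proved, and your proposal contains no substitute for it; as written it reduces the theorem, correctly, to exactly the statement it then assumes.
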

We need the following result.
\begin{proposition}\label{g1}
  Let $\Oo$ satisfy hypotheses in \ref{setup-case2-O}. Let $W$ be a finitely generated free $\Oo$-module. Let $G = < \sigma>$ be a group of order $p$ acting on $W$. We assume the induced action on $W/\pi W$ is trivial. Then $\pi H^1(G, W) = 0$.
\end{proposition}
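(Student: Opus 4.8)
The plan is to compute $H^1(G,W)$ directly from the $2$-periodic resolution of \ref{cyclic}, which identifies $H^1(G,W) = \ker(\Tr \colon W \to W)/\image(\sigma - 1)$, and to show that every $w \in \ker \Tr$ satisfies $\pi w \in \image(\sigma-1)$. Write $\tau = \sigma - 1$. The triviality of the action on $W/\pi W$ says exactly that $\tau(W) \sub \pi W$; since $W$ is free (hence torsion-free) over the DVR $\Oo$, there is a well-defined $\Oo$-linear endomorphism $\delta \colon W \to W$ with $\tau = \pi \delta$. I also record that the hypotheses of \ref{setup-case2-O} force $p = \pi^{p-1} u$ for a unit $u \in \Oo$: indeed $p = \Phi_p(1) = \prod_{i=1}^{p-1}(1 - \xi^i)$ for a primitive $p$-th root $\xi$, and each factor $1 - \xi^i$ is $\pi$ times a unit.

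First I would expand the trace in powers of $\tau$. Using $\sigma = 1 + \tau$ and the hockey-stick identity $\sum_{k=j}^{p-1}\binom{k}{j} = \binom{p}{j+1}$, we get
$$ \Tr = \sum_{k=0}^{p-1}(1+\tau)^k = \sum_{j=0}^{p-1}\binom{p}{j+1}\tau^j. $$
The top coefficient $\binom{p}{p}=1$ gives the term $\tau^{p-1}$, while every intermediate coefficient $\binom{p}{j+1}$ with $0 \le j \le p-2$ is divisible by $p$. Hence $\Tr = \tau^{p-1} + p\,g(\tau)$, where $g(\tau) = 1 + \tau\,g_1(\tau)$ for some $g_1 \in \Z[\tau]$ (the constant term of $g$ is $\binom{p}{1}/p = 1$).

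The main computation is then to take $w \in \ker \Tr$ and substitute $\tau = \pi\delta$ and $p = \pi^{p-1}u$. From $\Tr(w) = 0$ we obtain
$$ 0 = \tau^{p-1}(w) + p\,g(\tau)(w) = \pi^{p-1}\delta^{p-1}(w) + \pi^{p-1}u\big(w + \pi\,\delta g_1(\tau)(w)\big). $$
Cancelling $\pi^{p-1}$ (legitimate since $W$ is torsion-free) gives $u\,w = -\delta\big(\delta^{p-2}(w) + \pi u\,g_1(\tau)(w)\big) \in \image \delta$, so $w \in \image\delta$ because $u$ is a unit. Writing $w = \delta(v)$ we conclude $\pi w = \pi\delta(v) = \tau(v) \in \image(\sigma - 1)$. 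As $w \in \ker\Tr$ was arbitrary, $\pi$ annihilates $\ker\Tr/\image(\sigma-1) = H^1(G,W)$, which is the claim.

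The point where I expect to have to be careful is the bookkeeping that makes the exponents match: the term $\tau^{p-1}$ contributes a factor $\pi^{p-1}$, and it is precisely the valuation $v_\pi(p) = p-1$ coming from \ref{setup-case2-O} that lets the same power $\pi^{p-1}$ factor out of the $p\,g(\tau)$ term, so that a single cancellation of $\pi^{p-1}$ leaves the unit $u$ in front of $w$. If $p$ were merely divisible by $\pi$ rather than by exactly $\pi^{p-1}$ times a unit, the argument would not close; this is exactly where the special arithmetic of the Hilbert class field of $\mathbb{Q}(e^{2\pi i/p})$ enters.
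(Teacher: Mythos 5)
Your proof is correct, and it takes a genuinely different route from the paper's. The paper first establishes the structural Lemma \ref{basis} --- by a fairly delicate induction on $\rank_\Oo W$, there is an $\Oo$-basis $\{w_1, \ldots, w_s\}$ of $W$ with $\sigma(w_i) = \xi_i w_i$ for $p$-th roots of unity $\xi_i$ --- and then Proposition \ref{g1} falls out in three lines: for a cocycle $u = \sum a_i w_i$, the condition $\Tr(u) = 0$ kills the coordinates with $\xi_i = 1$, and $\theta = \sum_{\xi_i \neq 1}(a_i/c_i)w_i$ satisfies $\sigma(\theta) - \theta = \pi u$. You bypass the diagonalization entirely: triviality mod $\pi$ lets you factor $\tau = \sigma - 1 = \pi\delta$ with $\delta$ a well-defined $\Oo$-linear endomorphism (this uses only that $W$ is $\pi$-torsion-free), the hockey-stick expansion $\Tr = \tau^{p-1} + p\bigl(1 + \tau g_1(\tau)\bigr)$ is correct, and $p = \pi^{p-1}u$ does follow from \ref{setup-case2-O} via $p = \prod_{i=1}^{p-1}(1-\xi^i)$ with each factor a unit times $\pi$; cancelling $\pi^{p-1}$ then gives $uw = -\delta\bigl(\delta^{p-2}(w) + \pi u\,g_1(\tau)(w)\bigr)$ (only $\Oo$-linearity of $\delta$ is needed here, and the edge case $p = 2$, where $g_1 = 0$ and $\delta^{p-2} = \mathrm{id}$, goes through), so $w \in \image\delta$ and $\pi w = \tau(v) \in \image(\sigma-1)$, which by the identification $H^1(G,W) = \ker\Tr/\image(\sigma-1)$ from \ref{cyclic}(3) is exactly the claim. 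What your route buys: it is shorter, avoids the basis-change induction of Lemma \ref{basis} altogether, and is visibly more general --- finite freeness of $W$ is never used, only $\pi$-torsion-freeness. What the paper's route buys: Lemma \ref{basis} is a sharper statement of independent interest (simultaneous diagonalizability over $\Oo$ itself, not merely over the fraction field) and produces an explicit cobounding element. One quibble with your closing commentary, which does not affect the proof: the cancellation actually closes whenever $v_\pi(p) \leq p-1$ (a smaller valuation of $p$ only leaves extra harmless factors of $\pi$ on the $\delta^{p-1}$ term, and the unit in front of $w$ still survives); since the presence of primitive $p$-th roots of unity forces $v_\pi(p) \geq p-1$ anyway, what the hypothesis of \ref{setup-case2-O} genuinely rules out is the case $v_\pi(p) > p-1$, i.e.\ ramified roots of unity with $v_\pi(\xi - 1) \geq 2$, rather than ``$p$ merely divisible by $\pi$.''
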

We give a proof of Theorem \ref{c2-thm} assuming Proposition \ref{g1}.
\begin{proof}[ Proof of Theorem \ref{c2-thm}]
It suffices to prove $S$ is \CM, see \ref{incl}.
Set $R =  \bigoplus_{n \geq 0} R_n$. Then $R_n$ is a finite free $\Oo$-module. As the induced action of $G$ on $R/\pi R$ is trivial we get by Proposition \ref{g1} that
$\pi H^1( G, R_n) = 0$ for all $n \geq 0$. As $H^1(G, R) = \bigoplus_{n \geq 0}H^1(G, R_n)$, see \ref{comp},  we get that $\pi H^1(G, R) = 0$.  By exact sequence \ref{les},  we get an inclusion $ H^1(G, R) \hookrightarrow H^1(G, R/\pi R)$.
As the action of $G$ on $R/\pi R$ is trivial we get $H^1(G, R/\pi R) = R/\pi R$, see \ref{trivial}. It follows that $H^1(G, R)$ has positive depth. By \ref{prelim} we get that $R^G$ is \CM.
\end{proof}
To prove Proposition \ref{g1} we need the following result:
\begin{lemma}\label{basis} Let $\Oo$ satisfy hypotheses in \ref{setup-case2-O}. Let $W$ be a finitely generated free $\Oo$-module. Let $G = < \sigma>$ be a group of order $p$ acting on $W$. We assume the induced action on $W/\pi W$ is trivial. Then there exists a $\Oo$ basis $\{ w_1, \ldots, w_s \}$  of $W$ such that $\sigma(w_i) = \xi_i w_i$  for all $i$, where $\xi_i$ is a $p^{th}$-root of unity.
\end{lemma}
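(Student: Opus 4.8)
The plan is to diagonalize $\sigma$ integrally by first diagonalizing it over the fraction field and then showing that the resulting spectral projections are already defined over $\Oo$. Let $K = \Oo_\pi$ be the fraction field. Since $\sigma^p = 1$ and $\Oo$ (hence $K$) contains all $p$-th roots of unity, over $K$ the operator $\sigma$ is a root of the separable polynomial $T^p - 1 = \prod_\xi(T-\xi)$, so $\sigma$ is diagonalizable on $W \otimes_\Oo K$. This gives $W \otimes_\Oo K = \bigoplus_\xi V_\xi$, where $\xi$ runs over the $p$-th roots of unity occurring as eigenvalues and $V_\xi$ is the $\xi$-eigenspace. For each such $\xi$ I would set $W_\xi = \{w \in W : \sigma w = \xi w\} = W \cap V_\xi$. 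Each $W_\xi$ is a saturated submodule of $W$ (if $\pi w \in W_\xi$ then $w \in W_\xi$, as $\Oo$ is a domain), hence a free $\Oo$-module, and the sum $\sum_\xi W_\xi$ is direct because the $V_\xi$ are independent. Thus the whole statement reduces to proving $W = \bigoplus_\xi W_\xi$: once this is known, concatenating bases of the summands yields the desired basis, and $\sigma$ acts on each summand by the scalar $\xi$.

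To prove $W = \bigoplus_\xi W_\xi$ I would invoke the Lagrange interpolation idempotents $e_\xi = \prod_{\eta \ne \xi}(\sigma - \eta)(\xi - \eta)^{-1}$, the product being over the remaining eigenvalues $\eta$. Over $K$ these are the spectral projections onto $V_\xi$, and $\sum_\xi e_\xi = \mathrm{id}$ since the associated Lagrange polynomials sum to the constant $1$. The crucial claim is that each $e_\xi$ already maps $W$ into $W$. Indeed, triviality of the action on $W/\pi W$ gives $(\sigma - 1)W \subseteq \pi W$, and every $p$-th root of unity $\eta$ satisfies $\eta \equiv 1 \pmod{\pi}$ (trivially if $\eta = 1$, and by \ref{setup-case2-O} otherwise), so $(\sigma - \eta)W = \big((\sigma - 1) - (\eta - 1)\big)W \subseteq \pi W$; hence $\sigma - \eta = \pi\,\tau_\eta$ for a well-defined $\Oo$-linear $\tau_\eta \colon W \to W$. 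On the other hand, for distinct $p$-th roots of unity $\xi - \eta = \eta\big(\xi\eta^{-1} - 1\big)$ is a uniformizer, of $\pi$-adic valuation exactly one ($\xi\eta^{-1}$ is primitive as $p$ is prime, so \ref{setup-case2-O} applies). Therefore $(\sigma - \eta)(\xi - \eta)^{-1} = \pi\tau_\eta/(\pi \cdot \text{unit})$ is an $\Oo$-linear endomorphism of $W$, and so is the product $e_\xi$. With $e_\xi(W) \subseteq W$ in hand, each $w \in W$ satisfies $w = \sum_\xi e_\xi(w)$ with $e_\xi(w) \in W \cap V_\xi = W_\xi$, giving $W = \sum_\xi W_\xi = \bigoplus_\xi W_\xi$.

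The main obstacle, and the reason the statement is not immediate, is that $p$ is not invertible in $\Oo$, so the usual averaging idempotents $\frac{1}{p}\sum_j \xi^{-j}\sigma^j$ are unavailable. The technical heart is therefore the valuation bookkeeping above: the numerator $\sigma - \eta$ is divisible by $\pi$ exactly as often as the denominator $\xi - \eta$, so the two cancel and the projections survive integrally. This cancellation is precisely what the hypothesis of \ref{setup-case2-O} — each $\xi - 1$ being $\pi$ times a unit — is designed to supply; without it the $e_\xi$ would only be defined over $K$ and the integral decomposition could genuinely fail.
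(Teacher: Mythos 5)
Your proof is correct, and it takes a genuinely different route from the paper's. The paper proceeds by induction on $\rank_\Oo W$: it saturates an eigenvector of $\sigma$ on $W_\pi$ to obtain $w_1 \in W \setminus \pi W$ with $\sigma(w_1) = \xi w_1$, passes to the quotient $W/\Oo w_1$, lifts a diagonalizing basis of the quotient, and then corrects the off-diagonal entries by hand in two iterative adjustment steps — using exactly the two facts you isolate, namely that triviality of the action mod $\pi$ forces the off-diagonal entries into $(\pi)$, and that $\xi - \eta = \pi \cdot (\text{unit})$ for distinct $p$-th roots of unity. Your argument packages these same valuation facts into a single global statement: since $(\sigma - \eta)W \subseteq \pi W$ while $\xi - \eta$ has $\pi$-adic valuation exactly one, each Lagrange factor $(\sigma - \eta)(\xi - \eta)^{-1}$ preserves $W$ (division by $\pi$ being well-defined as $W$ is free, hence $\pi$-torsion-free), so the spectral idempotents $e_\xi$ are integral and $W = \bigoplus_\xi W_\xi$ as $\Oo[G]$-modules, each $W_\xi$ free since it is a finitely generated torsion-free module over a DVR. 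This is shorter than the paper's proof, avoids all basis bookkeeping, and yields a canonical eigenspace decomposition rather than merely an eigenbasis; the paper's induction is more elementary in that it never leaves the language of explicit bases and matrix entries. Two implicit points in your write-up are fine but worth noting: separability of $T^p - 1$ uses that $K = \Oo_\pi$ has characteristic zero (part of \ref{setup-case2-O}), and $\xi\eta^{-1}$ being primitive for $\xi \neq \eta$ uses that $p$ is prime — also the degenerate case of a single eigenvalue is handled by the empty product $e_\xi = \mathrm{id}$. Your closing remark correctly locates the role of the hypothesis of \ref{setup-case2-O}: it is precisely what makes numerator and denominator of each Lagrange factor cancel integrally.
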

We give a proof of Proposition \ref{g1} assuming Lemma \ref{basis}.
\begin{proof}[Proof of Proposition \ref{g1}]
By \ref{basis} $W$ has a basis  $\{ w_1, \dots, w_s \}$ such that  $\sigma(w_i) = \xi_i w_i$  for all $i$, where $\xi_i$ is a $p^{th}$-root of unity.
Let $\beta = [u] \in H^1(G, W)$. Let $u = \sum_{i = 1}^{s} a_i w_i$. Then
$$ \Tr(u) = \sum_{\stackrel{i}{\xi_i = 1}}a_ipw_i = 0.$$
It follows that
$$ u = \sum_{\stackrel{i}{\xi_i \neq 1}} a_i w_i. $$
If $\xi_i \neq 1$ then by our assumption on $\Oo$ we get that there exists unit $c_i$ such that $\xi_i - 1 = \pi c_i$.
Set
$$ \theta = \sum_{\stackrel{i}{\xi_i \neq 1}} \frac{a_i }{c_i}w_i.$$
Then it is  easy to check that $\sigma(\theta) - \theta = \pi u$. The result follows.
\end{proof}
We now give
\begin{proof}[Proof of Lemma \ref{basis}]
We may assume the action of $\sigma$ on $W$ is not-trivial (otherwise there is nothing to prove). Let $K = \Oo_\pi$ which is a field of characteristic zero. We now consider the induced action of $\sigma$ on $W_\pi$. We note that the minimal polynomial of this action divides $t^p - 1$ which has $p$-distinct roots in $K$. So the action of $\sigma$ on $W_\pi$ is diagonalizable. Say $\sigma(e_1) = \xi e_1$ where $e_1 \in W_\pi$ and $\xi$ is a primitive $p^{th}$-root of unity. Say $e_1 = w_1/\pi^l$ where $w_1 \in  W\setminus \pi W$. It follows that $\sigma(w_1) = \xi w_1$.

We prove the result by induction on $s = \rank_\Oo W$. If $s = 1$ then note that $\{ w_1 \}$ is a basis of $W$ with the required properties. We now assume that $s = \rank W \geq 2$ and the result has been proved when rank is $s -1$. Let $W_1 = \Oo w_1$. Then note that $W_1$ is a free summand of $W$. Furthermore $W_1$ is a $\Oo[G]$-submodule of $W$. We have a short exact sequence of $\Oo[G]$-modules $ 0 \rt W_1 \rt W \rt W/W_1 \rt 0$. As $W/W_1$ is a free $\Oo$-module, by induction hypothesis $W/W_1$ has a basis $\{w_2^\prime,\cdots, w_s^\prime \}$ such that
$\sigma(w_i^\prime) = \xi_iw_i^\prime$ where $\xi_i$ is a  $p^{th}$-root of unity. We assume that $\xi_i = \xi$ for $2 \leq r$ and $\xi_i \neq \xi$ for $i  > r$  (we do not exclude the possibility that $r = 1$). So we have a basis $\{ w_1,w_2, \cdots, w_s \}$ of $W$ such that for $j \geq 2$ we have $\sigma(w_j) = \xi_j w_j + a_j w_1$.

We may assume there exists $c \leq r$ such that $\sigma(w_j) = \xi w_j $ for $j =1, \ldots,c$ (we do not exclude the possibility that $c = 1$). If $c < r$, say $\sigma(w_{c+1}) = \xi w_{c+1} + a_{c+1} w_1$. As $K[G]$ is semi-simple, the short exact sequence $0 \rt (W_1)_\pi \rt W_\pi \rt (W/W_1)_\pi \rt 0$ splits. It follows that there exists $v \in W$ such that $v$ is not in $K$-span of
$\{ w_1, \ldots, w_c \}$  and $\sigma(v) = \xi v$. Say
$$v = \sum_{i = 1}^{c}\alpha_i w_i + \sum_{i = c +1}^{r}\alpha_i w_i + \sum_{i > r}\alpha_i w_i. $$
We have
$$\sigma(v) =  \sum_{i = 1}^{c}\xi\alpha_i w_i + \sum_{i = c +1}^{r}\xi \alpha_i w_i + \left(\sum_{i = c +1}^{r}\alpha_i a_i\right)w_1 + \sum_{i > r} \xi_i\alpha_i w_i + \left(\sum_{i > r}\alpha_i a_i\right)w_1.$$
As $\sigma(v) = \xi v$ and $\xi_i \neq \xi$ for $i  > r$ we have that $\alpha_i = 0$ for $i > r$. We also get $\sum_{i = c +1}^{r}\alpha_i a_i = 0$.
As $v$ does not belong to the $K$-span of $\{ w_1, \ldots, w_c \}$ we get that $\alpha_i \neq 0$ for some $i > c$. After permuting $w_i$  we may  assume $\alpha_j \neq 0$ for $j = c+1, \ldots, l$ and $\alpha_j = 0$ for $j > l$. Let $\alpha_j = \pi^{r_j}\beta_j$ with $\beta_j $ a unit. After permuting we may assume $r_{c+1} \leq r_j$ for $j =c+2, \ldots, l$. As
$\sum_{i= c+1}^{l}\pi^{r_i}\beta_i a_i = 0$ it follows that $\sum_{i= c+1}^{l}\pi^{r_i - r_{c+1}}\beta_i a_i = 0$.
Set
$$w^\prime_{c+1} = \beta_{c+1}w_{c+1} + \sum_{i = c+2}^{l}\beta_i  \pi^{r_i - r_{c+1}}w_i.$$
Then note $\sigma(w^\prime_{c+1}) = \xi w^\prime_{c+1}$. Furthermore $\{ w_1, \ldots, w_c, w^\prime_{c+1}, w_{c+2}, \ldots, w_s \}$ is still a basis of $W$. Iterating we may assume that we have a basis $\{ w_1, \ldots, w_s \}$ of $W$ such that $\sigma(w_i) =\xi w_i$ for $i \leq r$ and $\sigma(w_i) = \xi_i w_i + a_i w_1$ for $i > r$ (where $\xi_i \neq \xi$).

Assume $a_i = 0$ for $i = r+1, \ldots, q$ and $a_{q+1} \neq 0$. As the action of $G$ on $W/\pi W$ is trivial we get that $a_{q+1} \in (\pi)$. Say $a_{q+1} = \gamma \pi^t$.
We note that $\xi - \xi_{q+1} = \pi u$ where $u$ is a unit in $\Oo$. Set $d = \pi^{t-1}\gamma/u$. Set $w_{q+1}^\prime = dw_1 + w_{q + 1}$. It is readily verified that $\sigma(w_{q+1}^\prime) = \xi_{q +1} w_{q+1}^\prime$. Furthermore $\{ w_1, \ldots, w_{q}, w^\prime_{q+1}, w_{q+2}, \cdots, w_r \}$ is still a basis of $W$. Iterating we obtain the required basis of $W$.
\end{proof}

\section{Proof of Theorem \ref{mixed}  }
In this section we give a proof of Theorem \ref{mixed}. We first prove
\begin{theorem}
\label{mix-local}Let $\Oo$ satisfy assumption as in \ref{setup-case2-O}.
Let $R \in \F_\Oo$, see \ref{class}.  Let $G \subset Aut^*(R)$ be a  $p$-group  acting  on $R$ (see \ref{action}). Let $S = R^G$ be the ring of invariants.
Then $S \in \F_\Oo$.
\end{theorem}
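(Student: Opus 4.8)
The plan is to prove the theorem by induction on $|G|$, taking Theorems \ref{case-1} and \ref{c2-thm} as the base of the induction. When $|G| = 1$ there is nothing to prove, since then $S = R \in \F_\Oo$ by hypothesis. When $|G| = p$ the group $G$ is cyclic of order $p$; because $p$ is prime, the natural map $G \rt Aut^*(R/\pi R)$ has kernel either trivial or all of $G$, hence is either injective or trivial. In the injective case $S \in \F_\Oo$ by Theorem \ref{case-1}, and in the trivial case by Theorem \ref{c2-thm} (here the hypotheses \ref{setup-case2-O} on $\Oo$ are exactly those assumed in the present statement, so that theorem applies). Thus the order-$p$ case is already settled by the previous two sections.

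For the inductive step I would assume $|G| = p^k$ with $k \geq 2$ and that the result holds for every $p$-group of order strictly less than $|G|$ acting (via $Aut^*$) on any ring in $\F_\Oo$. Since $G$ is a nontrivial $p$-group, its center is nontrivial, so one may choose a central subgroup $N$ of $G$ of order $p$ (necessarily normal in $G$). First I would set $T = R^N$. As $N$ is cyclic of order $p$ and lies in $Aut^*(R)$, the dichotomy used in the base case applies to $N$ as well: the map $N \rt Aut^*(R/\pi R)$ is injective or trivial, and in either case $T = R^N \in \F_\Oo$ by Theorem \ref{case-1} or Theorem \ref{c2-thm}.

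Next I would let the quotient act. Because $N$ is normal, each $g \in G$ carries $R^N$ into itself, and the resulting automorphism of $T$ is graded, $\Oo$-linear and fixes $\Oo$, so it belongs to $Aut^*(T)$; since $N$ acts trivially on $T = R^N$, this action factors through $G/N$. Writing $\ov{G}$ for the image of $G/N$ in $Aut^*(T)$, we have a finite $p$-group with $|\ov{G}| \leq |G|/p < |G|$ and $T^{\ov{G}} = (R^N)^{G/N} = R^G = S$. Since $T \in \F_\Oo$ and $\ov{G}$ is a $p$-group of order strictly less than $|G|$, the induction hypothesis yields $S = T^{\ov{G}} \in \F_\Oo$, which completes the induction.

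The substantive content lies entirely in the reduction to the order-$p$ case, which is precisely what Theorems \ref{case-1} and \ref{c2-thm} provide; the main obstacle has therefore been dealt with in the preceding two sections. What remains to verify in the inductive step is of a routine nature: that $T = R^N$ genuinely lies in $\F_\Oo$ over the \emph{same} DVR $\Oo$ (so that the inductive hypothesis is applicable without change to $\Oo$), and that the induced $G/N$-action on $T$ really produces elements of $Aut^*(T)$. Both are straightforward once a central subgroup $N$ of order $p$ has been selected, the only point requiring care being that one argues with the image $\ov{G} \subseteq Aut^*(T)$ rather than with $G/N$ itself, so that the object to which induction is applied is an honest finite subgroup of $Aut^*(T)$ of order below $|G|$.
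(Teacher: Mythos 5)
Your proof is correct and follows essentially the same route as the paper: induction on $|G|$, with the order-$p$ case settled by the dichotomy for the natural map $\eta \colon G \rt Aut^*(R/\pi R)$ via Theorems \ref{case-1} and \ref{c2-thm}, and the inductive step via invariants of a normal subgroup followed by the quotient action. The one difference is organizational: the paper splits the inductive step into three cases according to whether $K = \ker \eta$ is a proper nontrivial subgroup, trivial, or all of $G$ (using the induction hypothesis twice in the first case, and a central order-$p$ subgroup in the other two), whereas you always pick a central subgroup $N$ of order $p$ and observe that, since $N$ has prime order, the restriction of $\eta$ to $N$ is automatically injective or trivial, so one of the two order-$p$ theorems applies uniformly to give $R^N \in \F_\Oo$ over the same $\Oo$; this collapses the paper's three cases into one and in particular subsumes its Case 1. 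You are also slightly more careful than the paper on one point: you pass to the image $\ov{G}$ of $G/N$ in $Aut^*(R^N)$, which is the honest finite subgroup of order $< |G|$ to which the induction hypothesis applies and which satisfies $(R^N)^{\ov{G}} = R^G$, whereas the paper simply asserts that $G/K$ (resp.\ $G/H$) is a subgroup of $Aut^*(R^K)$, which strictly speaking is its image. Net effect: same skeleton and same key ingredients, with your version marginally cleaner in the inductive step.
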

\begin{proof}
 We prove the result by induction on order of $G$.

We first consider the case when $|G| = p$. Consider the natural map $\eta \colon G \rt Aut^*(R/\pi R)$. Then either $\ker \eta = G$ or it is trivial.
In the first case $S$ is in $\F_\Oo$ by Theorem \ref{c2-thm}. In the second case $S$ is in $\F_\Oo$ by Theorem \ref{case-1}.

Now assume $|G| = p^r$ and the assertion is proved for all  $p$-groups of order $\leq p^{r-1}$. Consider the natural map $\eta \colon G \rt Aut^*(R/\pi R)$.\\
Case 1: $K = \ker \eta$ is a proper subgroup of $G$. \\
By induction hypotheses $R^K$ is \CM \ and so $R^K \in \F_\Oo$. We note that $G/K$ is a finite subgroup of $Aut^*(R^K)$. So again by induction hypothesis  $S = (R^K)^{G/K} \in \F_\Oo$.

Case 2: $K = \ker \eta = \{ 1 \}$. \\
Let $H$ be a subgroup of order $p$ contained in the center of $G$. Then $H$ is a normal subgroup of $G$.  By Theorem \ref{case-1},  $R^H$ is in $\F_\Oo$. We note that $G/H$ is a finite subgroup of $Aut^*(R^H)$. So again by induction hypothesis  $S = (R^H)^{G/H} \in \F_\Oo$.

Case 3: $K = \ker \eta = G$. \\
Let $H$ be a subgroup of order $p$ contained in the center of $G$. Then $H$ is a normal subgroup of $G$.  By Theorem \ref{c2-thm},  $R^H$ is in $\F_\Oo$. We note that $G/H$ is a finite subgroup of $Aut^*(R^H)$. So again by induction hypothesis  $S = (R^H)^{G/H} \in \F_\Oo$.
\end{proof}

We now give:
\begin{proof}[Proof of Theorem \ref{mixed}]
By Theorem \ref{sylow} it suffices to prove that for every prime dividing order of $G$ the ring $R^{Syl_p(G)}$ is \CM. So it suffices to assume that $G$ is a $p$-group.
By \ref{red-hilb}
we may assume that  $A$ is  the ring of  integers of the Hilbert class field of $\mathbb{Q}(e^{2\pi i/p})$. Let $P$ a height one prime in $A$. It suffices to prove $A_P[X,Y]^G$ is \CM. Let $P \cap \Z = (q)$ where $q$ is prime. If $q \neq p$ the $|G|$ is invertible in $A_P$. In this case it is well-known that $A_P[X,Y]^G$ is \CM.
If $q = p$ then  $A_P = \Oo$ satisfies the hypotheses of \ref{setup-case2-O}, see \ref{units}. By Theorem \ref{mix-local}, $A_P[X, Y]^G$ is \CM. The result follows.
\end{proof}

\section{Proof of Theorem \ref{unramify}}
In this section we prove Theorem \ref{unramify}. We need some preliminaries from algebraic number theory.

\s \label{efg} Let $K \subseteq  L$ be an extension of number fields and let $A \subseteq B$ be the corresponding extension of their ring of integers.
Let $\p$ be a prime in $A$. Let $\p B = P_1^{e_1}P_2^{e_2} \cdots P_g^{e_g}$ where $P_i$ are distinct primes in $B$ and $e_i \geq 1$. Recall $\p$ is said to be un-ramified in $B$ if each $e_i = 1$. The number $e_i$ is denoted as $e(P_i/\p)$. Let $f(P_i/\p) = [B/P_i \colon A/\p]$ be the degree of the field extension $B/P_i$ of $A/\p$. Then by \cite[Proposition 21, p.\ 24]{L-ant} we have
\[
\sum_{i = 1}^{g}e(P_i/\p)f(P_i/\p) = \dim_K L.
\]
The next result is a crucial ingredient to prove Theorem \ref{unramify}.
\begin{theorem}\label{ing}
Let $p$ be a prime number and let $K$ be a number field. Assume $p$ is un-ramified in $K$. Let $\zeta$ be a primitive $p^{th}$-root of unity and let $\q$ be the unique prime in $\mathbb{Q}(\zeta)$ lying above $p$. Then $\q$ is un-ramified in $K(\zeta)$.
\end{theorem}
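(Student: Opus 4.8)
The plan is to compare ramification indices in the two towers $\mathbb{Q} \subseteq K \subseteq K(\zeta)$ and $\mathbb{Q} \subseteq \mathbb{Q}(\zeta) \subseteq K(\zeta)$, exploiting multiplicativity of ramification indices in towers together with the fact that $p$ is \emph{totally} ramified in $\mathbb{Q}(\zeta)$ with index exactly $p-1$. No deeper ramification theory (inertia groups, different, etc.) should be needed; the whole argument reduces to an inequality between positive integers.

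First I would record the two numerical inputs. Recall that in $\mathbb{Q}(\zeta)$ the prime $p$ is totally ramified, $(p) = \q^{\,p-1}$ with $\q = (\zeta-1)$, so that $e(\q/p) = p-1$ and $\q$ is the unique prime above $p$. In particular a prime of $K(\zeta)$ lies above $p$ exactly when it lies above $\q$, so it suffices to show $e(\mathfrak{P}/\q) = 1$ for every prime $\mathfrak{P}$ of $K(\zeta)$ lying above $p$. The second input is the hypothesis: since $p$ is un-ramified in $K$, if $\p = \mathfrak{P}\cap K$ then $e(\p/p) = 1$.

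Fixing such a $\mathfrak{P}$, multiplicativity of the ramification index in the tower $\mathbb{Q} \subseteq K \subseteq K(\zeta)$ gives $e(\mathfrak{P}/p) = e(\mathfrak{P}/\p)\,e(\p/p) = e(\mathfrak{P}/\p)$, while multiplicativity in $\mathbb{Q} \subseteq \mathbb{Q}(\zeta) \subseteq K(\zeta)$ gives $e(\mathfrak{P}/p) = e(\mathfrak{P}/\q)\,e(\q/p) = e(\mathfrak{P}/\q)(p-1)$. Equating the two yields $e(\mathfrak{P}/\p) = e(\mathfrak{P}/\q)(p-1)$. It then remains to bound the left-hand side: because $\zeta$ is a root of a polynomial of degree $p-1$ over $\mathbb{Q}$, hence over $K$, we have $[K(\zeta):K] \leq p-1$, and the ramification index of a prime never exceeds the degree of the extension (this is immediate from the relation $\sum_i e(P_i/\p)f(P_i/\p) = \dim_K L$ of \ref{efg} applied to $K \subseteq K(\zeta)$, since every summand is positive). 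Thus $e(\mathfrak{P}/\q)(p-1) = e(\mathfrak{P}/\p) \leq p-1$, which forces $e(\mathfrak{P}/\q) = 1$; as $\mathfrak{P}$ was arbitrary, $\q$ is un-ramified in $K(\zeta)$.

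The argument is essentially bookkeeping, so the only delicate points are the two facts feeding the final inequality: that $p$ is totally ramified in $\mathbb{Q}(\zeta)$ with index precisely $p-1$ (this is what produces the exact factor $p-1$, and is where the choice of $\zeta$ as a primitive $p$-th root of unity with $p$ prime is used), and the degree bound $[K(\zeta):K] \leq p-1$. Once both are in place the inequality $e(\mathfrak{P}/\q)(p-1) \leq p-1$ closes immediately, so I do not anticipate any serious obstacle.
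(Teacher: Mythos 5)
Your proof is correct, and it takes a genuinely different route from the paper's. The paper first proves an auxiliary claim, $K \cap \mathbb{Q}(\zeta) = \mathbb{Q}$ (using $f(\mathfrak{q}/p)=1$ and the fact that unramifiedness passes to subfields), from which it gets the exact degree $[K(\zeta):\mathbb{Q}(\zeta)] = [K:\mathbb{Q}] = s$; it then compares factorizations globally, writing $pA = P_1 \cdots P_g$ in $K$ and $\mathfrak{q}C = Q_1^{e_1}\cdots Q_m^{e_m}$ in $K(\zeta)$, computing $s(p-1) = \sum_i e_i(p-1)f(Q_i/p)$ from the fundamental identity of \ref{efg}, cancelling $p-1$, and playing the resulting $s = \sum_i e_i f(Q_i/p)$ against $s = \sum_j f(P_j/p)$ together with the divisibility $f(P_i/p) \mid f(Q_i/p)$ to force $m = g$ and all $e_i = 1$. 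You instead argue one prime at a time, using only multiplicativity of ramification indices in the two towers, $e(\mathfrak{P}/\mathfrak{p}) = e(\mathfrak{P}/p) = e(\mathfrak{P}/\mathfrak{q})(p-1)$, and then the crude bound $e(\mathfrak{P}/\mathfrak{p}) \leq [K(\zeta):K] \leq p-1$, which immediately forces $e(\mathfrak{P}/\mathfrak{q}) = 1$. This is more elementary and shorter: you avoid the intersection-field claim entirely, you need only the trivial degree bound $[K(\zeta):K] \leq \deg \Phi_p = p-1$ rather than the exact equality (which in the paper requires the Galois-theoretic input cited from Lang), and you dispense with all residue-degree bookkeeping — your inequality $e(\mathfrak{P}/\mathfrak{q})(p-1) \leq p-1$ does the work of the paper's counting argument. (Your argument even degenerates correctly when $\zeta \in K$, e.g.\ $p = 2$, since then $[K(\zeta):K] = 1 = p-1$.) What the paper's longer route buys is the extra information generated along the way — $K \cap \mathbb{Q}(\zeta) = \mathbb{Q}$, the exact degree $[K(\zeta):K] = p-1$, preservation of residue degrees, and a bijection between the primes over $p$ in $K$ and in $K(\zeta)$ — none of which is needed for the statement of Theorem \ref{ing} itself. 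The two inputs you rely on, namely total ramification $(p) = \mathfrak{q}^{p-1}$ in $\mathbb{Q}(\zeta)$ and the bound $e \leq [L:K]$ coming from $\sum_i e_i f_i = [L:K]$, are both standard and available from the paper's own references, so your proof is complete as stated.
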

\begin{proof}
  \emph{Claim-1:} $K \cap \mathbb{Q}(\zeta) = \mathbb{Q}$.\\
  Set $E = K \cap \mathbb{Q}(\zeta)$ and let $r = \dim_{\mathbb{Q}} E$. Let $D$ be the  ring of integers of $E$. We note that as there is only one prime lying above $p$ in $\mathbb{Q}(\zeta)$ there will be only one prime in $E$ lying above $p$. Also as $p$ is un-ramified in $K$ it is un-ramified in $E$, see \cite[Proposition 20, p.\ 24]{L-ant}. Let $pD = P$.
  Note $1 = f(\q/p) = f(\q/P)f(P/p)$. So $f(P/p) = 1$. By \ref{efg} we get $r = 1$.

  Let $s = \dim_\mathbb{Q} K$. We note that $\dim_K K(\zeta) = p -1$, see \cite[Chapter V1, 1.12]{L}. It follows that $\dim_{\mathbb{Q}(\zeta)} K(\zeta) = s$.
  Let $A, B, C$ be the ring of integers of $K, \mathbb{Q}(\zeta), K(\zeta)$ respectively. Let $pB = P_1P_2\cdots P_g$. Let $Q_1, \ldots, Q_m$ be the primes in $C$ lying above $\q$. Then note $m \geq g$. We may assume (after re-indexing) that $Q_i\cap A = P_i$. Let
  $$\q C = Q_1^{e_1} Q_2^{e_2} \cdots Q_m^{e_m}. $$
  Then
  $$p C = \q^{p-1} C  =  Q_1^{e_1(p-1)} Q_2^{e_2(p-1)} \cdots Q_m^{e_m(p-1)}. $$
  By \ref{efg} we get
$$s(p-1) = \sum_{i =1}^{m} e_i(p-1)f(Q_i/p). $$
So we have $s =  \sum_{i =1}^{m} e_if(Q_i/p)$. As $p$ is un-ramified in $K$ we obtain \\
$s = \sum_{i =1}^{g} f(P_i/p)$. As $Q_i \cap A = P_i$ it follows that $f_i(P_i/p)$ divides $f(Q_i/p)$. It follows that $m = g$ and $e_i = 1$ for all $i$. So $\q$ is unramified in $K(\zeta)$.
\end{proof}

\begin{lemma}\label{unite-ur}(with hypotheses as in \ref{ing}) Let $C$ be ring of integers in $K(\zeta)$ and let $P$ be a prime in $C$ containing $p$.  Let $(\Oo, (\pi)) = (C_P, PC_P)$.  If $\xi$ is any primitive $p^{th}$-root of unity then
 $\xi - 1 = \pi u_\xi$ where $u_\xi$ is a unit in $\Oo$.
\end{lemma}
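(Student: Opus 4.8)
The plan is to mimic the proof of Lemma \ref{units}, using Theorem \ref{ing} in place of the un-ramifiedness of the Hilbert class field over $\mathbb{Q}(e^{2\pi i/p})$. Let $B$ be the ring of integers of $\mathbb{Q}(\zeta)$. First I would recall the standard cyclotomic facts (as in the proof of Lemma \ref{units}, citing \cite[13.2.7]{IR}): $(\zeta - 1)B = \q$ is the unique prime of $B$ lying above $p$, one has $pB = \q^{p-1}$, and for every primitive $p^{th}$-root of unity $\xi$ (necessarily $\xi = \zeta^i$ for some $i$, hence $\xi \in C$) there is a unit $u_i \in B$ with $\xi - 1 = (\zeta - 1)u_i$. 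In particular $(\xi - 1)B = \q$ for every primitive $p^{th}$-root $\xi$.

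Next I would locate the prime $P$ in the tower $\mathbb{Q} \subseteq \mathbb{Q}(\zeta) \subseteq K(\zeta)$. Since $P$ contains $p$ and $\q$ is the only prime of $B$ lying above $p$, the contraction $P \cap B$ is a prime of $B$ containing $p$, so $P \cap B = \q$; that is, $P$ lies above $\q$. By Theorem \ref{ing} the prime $\q$ is un-ramified in $C$, so $e(P/\q) = 1$, and localizing the factorization of $\q C$ at $P$ gives $\q C_P = P^{e(P/\q)}C_P = P C_P = (\pi)$.

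It then remains to transfer the cyclotomic computation from $B$ into the DVR $\Oo = C_P$. From $(\zeta - 1)B = \q$ we get $(\zeta - 1)\Oo = \q C_P = (\pi)$, so $\zeta - 1$ generates the maximal ideal of the DVR $\Oo$; hence $\zeta - 1 = \pi v$ for some unit $v \in \Oo$. Combining this with $\xi - 1 = (\zeta - 1)u_i$, where $u_i$ is a unit of $B \subseteq \Oo$, yields $\xi - 1 = \pi (v u_i) = \pi u_\xi$ with $u_\xi = v u_i$ a unit in $\Oo$, as required.

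There is no serious obstacle here: once Theorem \ref{ing} is available the entire argument is number-theoretic bookkeeping. The only points that need care are to verify that $P$ genuinely lies above $\q$ (so that un-ramifiedness of $\q$ in $C$ applies) and that every primitive $p^{th}$-root of unity lies in $C$ (so that $\xi - 1$ makes sense in $\Oo$); both are immediate, the first from the uniqueness of the prime of $B$ above $p$ and the second from $\zeta \in K(\zeta)$.
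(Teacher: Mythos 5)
Your proposal is correct and follows exactly the route the paper intends: the paper omits the proof of Lemma~\ref{unite-ur}, stating only that it is ``similar to Lemma~\ref{units}'', and you have faithfully reproduced that argument, using the cyclotomic facts from \cite[13.2.7]{IR} together with Theorem~\ref{ing} (un-ramifiedness of $\q$ in $K(\zeta)$) in place of the un-ramifiedness of the Hilbert class field. The two points you flag for care --- that $P$ lies above $\q$ via uniqueness of the prime of $B$ over $p$, and that all primitive $p^{th}$-roots of unity lie in $C$ --- are handled correctly, so your write-up is a complete and accurate version of the proof the paper left to the reader.
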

The proof of the above Lemma is similar to  Lemma \ref{units} and so is omitted. Next we show
\begin{theorem}\label{unram-local}(with hypotheses as in \ref{unite-ur}) Let $R \in \F_\Oo$, see \ref{class}.  Let $G \subset Aut^*(R)$ be a  $p$-group  acting  on $R$ (see \ref{action}). Let $S = R^G$ be the ring of invariants.
Then $S \in \F_\Oo$.
\end{theorem}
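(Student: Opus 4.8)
The plan is to recognize that Theorem \ref{unram-local} is not genuinely a new theorem but an instance of Theorem \ref{mix-local}: the hypotheses carried over from \ref{unite-ur} serve only to guarantee that the DVR $\Oo = C_P$ satisfies the three axioms listed in \ref{setup-case2-O}. Theorem \ref{mix-local} is stated for an \emph{arbitrary} $p$-group $G \subset Aut^*(R)$ and for \emph{any} $\Oo$ obeying \ref{setup-case2-O}, and its conclusion is exactly $S \in \F_\Oo$. So all the work lies in verifying those axioms, after which \ref{mix-local} applies verbatim.

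First I would check the characteristic conditions. The fraction field $\Oo_\pi$ is the field of fractions of $C$, namely the number field $K(\zeta)$, which has characteristic zero; and since $p \in P$, the residue field $\Oo/(\pi) = C/P$ has characteristic $p$. Next, because $\zeta$ is a primitive $p$-th root of unity lying in $K(\zeta)$, it is an algebraic integer, so $\zeta \in C \subseteq \Oo$; as every $p$-th root of unity is a power of $\zeta$, the ring $\Oo$ contains all $p$-th roots of unity. This disposes of the first two axioms of \ref{setup-case2-O}.

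The only substantive axiom is the third, the unit relation $\xi - 1 = \pi u_\xi$ for every primitive $p$-th root of unity $\xi$, and this is precisely the content of Lemma \ref{unite-ur} (proved along the lines of Lemma \ref{units}). Its mechanism is worth recording, since it is where the unramifiedness hypothesis on $K$ is consumed: in $\mathbb{Q}(\zeta)$ the prime $\q$ above $p$ is totally ramified with $\q = (\zeta - 1)$ and $p = (\text{unit})(\zeta - 1)^{p-1}$; because $p$ is unramified in $K$, Theorem \ref{ing} guarantees that $\q$ remains unramified in $K(\zeta)$, so after localizing at $P$ the uniformizer $\pi$ and the element $\zeta - 1$ generate the same maximal ideal of $\Oo$, which forces $\xi - 1 = \pi u_\xi$. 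With all three axioms in hand, Theorem \ref{mix-local} yields $S \in \F_\Oo$.

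I do not anticipate a real obstacle here. The entire inductive machinery — the reduction through the three cases on $\ker(G \rt Aut^*(R/\pi R))$ and the two base-case results Theorems \ref{case-1} and \ref{c2-thm} — has already been executed inside Theorem \ref{mix-local}, and in particular the delicate Case 2 (trivial action mod $\pi$) rests on Proposition \ref{g1}, whose hypotheses are exactly the axioms of \ref{setup-case2-O}. The decisive structural input specific to the present setting is simply that $p$ stays unramified upon adjoining $\zeta$, supplied by Theorem \ref{ing}; everything else is bookkeeping to confirm that $\Oo = C_P$ falls under \ref{setup-case2-O}.
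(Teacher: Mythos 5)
Your proposal is correct and takes essentially the same approach as the paper, whose entire proof is to invoke Lemma \ref{unite-ur} to verify that $\Oo$ satisfies the hypotheses of \ref{setup-case2-O} and then apply Theorem \ref{mix-local}. Your explicit checks of the characteristic conditions and the presence of $p$-th roots of unity in $\Oo$, and your tracing of where unramifiedness enters via Theorem \ref{ing}, simply spell out details the paper leaves implicit.
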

\begin{proof}
  By \ref{unite-ur} we get that $\Oo$ satisfies the hypotheses as in \ref{setup-case2-O}. The result follows from Theorem \ref{mix-local}.
\end{proof}
We now give
\begin{proof}[ Proof of Theorem \ref{unramify}]
By Theorem \ref{sylow} it suffices to prove that for every prime dividing order of $G$ the ring $R^{Syl_p(G)}$ is \CM. So it suffices to assume that $G$ is a $p$-group.
Let $\zeta$ be a primitive $p^{th}$-root of unity. Set $L =K(\zeta)$ and let $C$ be the ring of integers of $L$. By \ref{bc-prop} it suffices to prove $C[X, Y]^G$ is \CM. Let
$P$ be a prime in $C$ and set $\Oo = C_P$. Set $(q) = P \cap \Z$. If $q \neq p$ then $p$ is a unit in $\Oo$. In this case it is well known that $\Oo[X, Y]^G$ is \CM. If $q = p$ then by \ref{unram-local} it follows that $\Oo[X, Y]^G$ is \CM. The result follows.
\end{proof}
\section{Cyclic groups}
In this section we prove Theorem \ref{cyclic-thm}.
The following result is a crucial ingredient to prove it.
\begin{lemma}\label{cyclic-lemma}
Let $(\Oo, \pi)$ be a DVR of mixed characteristic. Let $G = <\sigma > \subseteq GL_2(\Oo)$ be a cyclic group of order $m \geq 2$ such that there exists a basis ${X, Y}$ of $\Oo^2$ with  $\sigma(X) = \epsilon_1 X$ and $\sigma(Y) = \epsilon_2 Y$. Let $G$ act linearly on $\Oo[X, Y]$ (fixing $\Oo$). Then $R^G$ is
\CM.
\end{lemma}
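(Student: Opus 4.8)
The plan is to use that $\sigma$ acts diagonally, so that $R^G$ is an affine semigroup (monomial) subring of $R = \Oo[X,Y]$, and then to reduce modulo $\pi$ and invoke Hochster's characteristic-free theorem on normal semigroup rings. First I would describe $R^G$ explicitly. The eigenvalues $\epsilon_1,\epsilon_2$ are roots of unity, hence units of $\Oo$, and $\sigma(X^aY^b)=\epsilon_1^a\epsilon_2^b X^aY^b$. Since $\Oo$ is a domain, a polynomial is fixed by $\sigma$ precisely when every monomial occurring in it is fixed, so
$$ R^G = \bigoplus_{(a,b)\in\Gamma}\Oo\,X^aY^b, \qquad \Gamma = \{(a,b)\in\mathbb{N}^2 : \epsilon_1^a\epsilon_2^b = 1\}; $$
that is, $R^G = \Oo[\Gamma]$ is the semigroup ring of $\Gamma$ over $\Oo$.

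Next I would check that $\Gamma$ is a normal affine semigroup. The assignment $(a,b)\mapsto \epsilon_1^a\epsilon_2^b$ is a homomorphism from $\Z^2$ to the finite cyclic group of roots of unity generated by $\epsilon_1,\epsilon_2$; writing $L\subseteq\Z^2$ for its kernel, we have $\Gamma = \mathbb{N}^2\cap L = \mathbb{R}_{\geq 0}^2\cap L$, the intersection of the rational cone $\mathbb{R}_{\geq 0}^2$ with the lattice $L$. By Gordan's lemma $\Gamma$ is finitely generated, and it is saturated in the group $\Z\Gamma$ it generates: if $x\in\Z\Gamma\subseteq L$ and $nx\in\Gamma$ for some $n\geq 1$, then $nx\in\mathbb{N}^2$ forces $x\in\mathbb{N}^2$ (as $x\in\Z^2$), whence $x\in L\cap\mathbb{N}^2=\Gamma$. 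Thus $\Gamma$ is normal.

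Finally I would reduce modulo $\pi$ and conclude. As $R^G=\Oo[\Gamma]$ is $\Oo$-free on the invariant monomials, $\pi$ is a nonzerodivisor lying in the $*$-maximal ideal $\M=(\pi)+R^G_+$, and $R^G/\pi R^G \cong k[\Gamma]$ with $k=\Oo/(\pi)$. By Exercise 2.1.27 of \cite{BH}, $R^G$ is \CM\ if and only if $R^G_\M$ is, and since $\pi$ is a nonzerodivisor this holds if and only if $(R^G/\pi R^G)_\M$, equivalently $k[\Gamma]$, is \CM. Because $\Gamma$ is a normal affine semigroup, Hochster's theorem (see \cite{BH}) gives that $k[\Gamma]$ is \CM\ over the field $k$ in every characteristic; hence $R^G$ is \CM. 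The one genuinely substantive point is the verification that $\Gamma$ is saturated, together with the observation that Hochster's theorem is characteristic-free: this is exactly what allows the modular case $\operatorname{char}k=p$ to be handled here directly, without the group-cohomology and local-cohomology machinery deployed for the non-diagonal cases elsewhere in the paper.
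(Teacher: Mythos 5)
Your proposal is correct, but it takes a genuinely different route from the paper's. The shared starting point is the observation (the paper's Claim-2) that, since $\sigma$ scales each monomial $X^aY^b$ by the unit $\epsilon_1^a\epsilon_2^b$ and $\Oo$ is a domain, $R^G$ is spanned over $\Oo$ by the invariant monomials. From there the paper stays completely elementary: because $\epsilon_i^m=1$ one has $X^m, Y^m \in S = R^G$, and the paper checks by hand that $X^m, Y^m, \pi$ is an $S$-regular sequence --- the key step being that if $\pi u \in (X^m,Y^m)S$ then every invariant monomial of $u$ has $i \geq m$ or $j \geq m$, and an invariant monomial $X^iY^j$ with $i\geq m$ factors as $X^m\cdot X^{i-m}Y^j$ with $X^{i-m}Y^j$ again invariant --- giving $\depth_\M S \geq 3 = \dim S$ directly. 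You instead package the monomial description as the semigroup ring $S = \Oo[\Gamma]$ with $\Gamma = \mathbb{N}^2 \cap L$ for the kernel lattice $L$, verify normality (your saturation check is the one substantive point, and it is correct; finite generation is also immediate without Gordan since $L$ has finite index in $\Z^2$, so $\Gamma$ is generated by $(d,0)$, $(0,d)$ and the finitely many elements of $\Gamma$ in a box), and then kill the regular element $\pi$ to reduce to $k[\Gamma]$ and invoke Hochster's characteristic-free theorem \cite[6.3.5]{BH}; your use of \cite[Exercise 2.1.27]{BH} to pass between graded and local \CM-ness is exactly the device the paper itself uses. What your route buys: it is conceptual, works verbatim for diagonal actions of arbitrary finite abelian subgroups of $GL_n(\Oo)$ in any number of variables, and identifies $R^G$ as a normal affine semigroup ring, from which the \CM\ property in the modular fibre is automatic. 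What the paper's route buys: it is self-contained, avoids Hochster's nontrivial theorem, and produces an explicit homogeneous system of parameters that is a regular sequence. Neither argument actually uses the mixed-characteristic hypothesis on $\Oo$.
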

\begin{proof}
We note that $\epsilon_i^m = 1$ for $i = 1, 2$. It follows that $X^m, Y^m \in S = R^G$.

\emph{Claim-1:} $X^m, Y^m$ is a $S$-regular sequence. \\
We have an exact sequence $0 \rt S \rt R \xrightarrow{\sigma - 1} R$ and $X^m, Y^n$ is an $R$-regular sequence.
The assertion follows.

\emph{Claim-2} If $u = \sum_{i,j}a_{ij}X^iY^j \in S$ then $X^iY^j \in S$ for all $i,j$ with $a_{ij} \neq 0$.

Note $u = \sigma(u) = \sum_{i,j}a_{ij}\epsilon_1^i \epsilon_2^j X^iY^j $. Comparing coefficients the result follows.

We now assert that $X^m, Y^m, \pi$ is a $S$-regular sequence. This will prove our result.
By Claim-1 we have $X^m, Y^m$ is a $S$-regular sequence.  Let $u \in S$ such that $\pi u \in (X^m, Y^m)S$. We want to show $u \in (X^m, Y^m)S$.  Let $u = \sum_{i,j}a_{ij}X^iY^j \in S$.  Then $X^iY^j \in S$ for all $i,j$ with $a_{ij} \neq 0$.  Furthermore either $i \geq m$ or $j \geq m$ by our assumption on $u$.\\
Case-1 $i \geq m$.\\
We have $w = X^iY^j = X^m X^{i-m}Y^j$. As $\sigma(w) = w$ and $\sigma(X^m) = X^m$ it follows that  $t = X^{i-m}Y^j \in S$. So $w \in (X^m, Y^m)S$.\\
Case-2: $j \geq m$.\\
This is similar to Case-1. Again we have $ X^iY^j \in (X^m, Y^m)S$.

Thus $u \in  (X^m, Y^m)S$. So  $X^m, Y^m, \pi$ is a $S$-regular sequence. The result follows.
\end{proof}
We now give
\begin{proof}[Proof of Theorem \ref{cyclic-thm}(1)]
 Let $\zeta$ be a primitive $m^{th}$-root of unity. Set $L =K(\zeta)$ and let $C$ be the ring of integers of $L$. By \ref{bc-prop} it suffices to prove $C[X, Y]^G$ is \CM. Let
$P$ be a prime in $C$ and set $(\Oo , (\pi)) = (C_P, PC_P)$.
Note $\sigma_\pi \colon \Oo^2_\pi \rt \Oo^2_\pi$ is  diagonalizable.
 We can find basis $v_1, v_2$ of $\Oo_\pi^2$ with $\sigma(v_i) = \epsilon v_i$. Let $v_1 = u_1/\pi^s$ with $u_1 \in \Oo^2 \setminus \pi \Oo^2$. Then  $\sigma(u_1) = \epsilon u_1$.
Let $\{ u_1, u_2 \}$ be a basis of $\Oo^2$. We have $v_2 = au_1 + bu_2$ for some $a, b \in \Oo_\pi$. Note $b \neq 0$.
As $\sigma(v_2) = \epsilon v_2$ it follows that   $\sigma(u_2) = \epsilon u_2$.
By \ref{cyclic-lemma} it follows that $\Oo[X,Y]^G$ is \CM. The result follows.
\end{proof}

To prove (2) of Theorem \ref{cyclic-thm} we need the following fact.
\s \label{fact-pr}
Let $p$ be an odd prime and let $D$ be the ring of integers in $\mathbb{Q}(e^{2\pi i/p^r})$. Let $\zeta$ be a primitive $p^{r}$-th root of unity. Let $Q$ be the unique prime in $D$ lying above
$p$. Then $Q = (\zeta - 1)$, see \cite[p.\ 73]{L-ant}. Let $K$ be the Hilbert class field of  $\mathbb{Q}(e^{2\pi i/p^r})$ (with ring of integers $C$). Then $Q$ is un-ramified in $K$. Let $P$ be a prime in
$C$  lying above $p$. Then $PC_P = (\zeta - 1)C_P$.

\begin{proof}[Proof of Theorem \ref{cyclic-thm}(2)]
 By Theorem \ref{sylow} it suffices to   prove that $R^{Sylow_p}$ is \\  \CM \ for each prime $p$ dividing $|G|$.

When $p = 2$ by our hypotheses $Sylow_2(G) = \Z/2 \Z$. So in this case the result follows from Theorem \ref{mixed}.

Now assume $p \neq 2$ and $|H| = p^r$ and $H = <\tau >$ is a Sylow $p$-subgroup of $G$.  We prove $R^H$ is \CM \ by induction on $r$. When $r = 1$ the result follows from Theorem \ref{mixed}. By an argument similar to \ref{red-hilb} it suffices to assume that $A$ is the ring of integers of  Hilbert class field of $\mathbb{Q}(e^{2\pi i/p^r})$. We note that
eigen-values of $\tau$ will be $\epsilon$ and $\epsilon^{-1}$ where $\epsilon$ is a primitive $p^r-{th}$ root of unity. Set $K = <\tau^p>$. Then $|K| = p^{r-1}$ and so by induction hypothesis we have $R^K$ is \CM.  Let
$P$ be a prime in $A$ and set $\Oo = A_P$. Set $(q) = P \cap \Z$. If $q \neq p$ then $p$ is a unit in $\Oo$. In this case it is well known that $\Oo[X, Y]^G$ is \CM.
Now assume $q = p$. Set $T = \Oo[X, Y]$.

Claim-1: There exists a basis $\{ u, v \}$ of $\Oo^2$ such that with respect to this basis
\[
\tau = \begin{pmatrix}
\epsilon & 0 \\ 0 & \epsilon^{-1}
\end{pmatrix}
 \quad \text{or} \quad
 \tau = \begin{pmatrix}
\epsilon & 1 \\ 0 & \epsilon^{-1}
\end{pmatrix}
\]
It is elementary that we have a basis $\{ w_1, w_2 \}$ of $\Oo^2$ such that with respect to this basis
\[
\tau = \begin{pmatrix}
\epsilon & a \\ 0 & \epsilon^{-1}
\end{pmatrix}
\]
We note that $$\epsilon - \epsilon^{-1} = \epsilon^{-2}(\epsilon^2 - 1) = \pi \theta \quad \text{where $\theta$ is a unit}.$$
(We have used above that  $p$ is odd and \ref{fact-pr}). Suppose $a$ is not zero and neither a unit. Say $a = \pi^r \alpha$ where $\alpha$ is a unit.
Set
\[
\gamma = - \frac{\pi^{r}\alpha}{\epsilon - \epsilon^{-1} } \quad \text{and} \quad w_2^\prime = \gamma w_1 + w_2.
\]
Then check that $\tau(w_2^\prime) = \epsilon^{-1}w_3^\prime$.
Furthermore
$\{w_1, w_2^\prime \}$ is a basis of $\Oo^2$ and with respect to this basis
\[
\tau = \begin{pmatrix}
\epsilon & 0 \\ 0 & \epsilon^{-1}
\end{pmatrix}
\]
Thus the only case we have to consider is when $a$ is a unit. Then note $\{ aw_1, w_2 \}$ is a basis of $\Oo^2$ and with respect to this basis
\[
\tau = \begin{pmatrix}
\epsilon & 1 \\ 0 & \epsilon^{-1}
\end{pmatrix}
\]
Thus Claim-1 is proved.

If $\tau$ with respect to some basis of $\Oo^2$ is
\[
\tau = \begin{pmatrix}
\epsilon & 0 \\ 0 & \epsilon^{-1}
\end{pmatrix}
\]
then by \ref{cyclic-lemma} we ger $T^G$ is \CM.

We consider the second case of Claim-1. We may without loss of any generality assume that with respect to basis $\{ X, Y \}$ of $\Oo^2$ we have
\[
\tau = \begin{pmatrix}
\epsilon & 1 \\ 0 & \epsilon^{-1}
\end{pmatrix}
\]
By induction hypothesis $T^K$ is \CM. We note that $T^K \in \F_\Oo$, see \ref{incl}.
We have in $R/\pi R$ that $\tau(\ov{Y)} =  \ov{X} + \ov{Y}$.
Set
$$ t =  \prod_{i = 0}^{p^{r-1} - 1}\tau^{pi}(Y).$$
 Then note that $t \in R^K$.
 As order of $\ov{\tau}$ is $p$ we get
  $$\ov{t} = \ov{Y}^{p^{r-1}} \quad \text{and} \quad \tau(\ov{t}) = \ov{X}^{p^{r-1}} + \ov{Y}^{p^{r-1}}.$$
 As $H/K$ is cyclic of order $p$ it follows that the natural map $H/K \rt Aut^*(T^K/\pi T^K)$ is injective. By \ref{case-1} it follows that $(T^K)^{H/K}$ is \CM.
 It remains to note that $T^H = (T^K)^{H/K}$.
\end{proof}

\emph{Acknowledgement:} I thank Sudhir Ghorpade and Dipendra Prasad for many discussions on cyclotomic extensions.

\end{document}